\newfont{\eufm}{eufm10 scaled\magstep1}
\newcommand{\cB}{\mathcal{B}}
\newcommand{\cC}{\mathcal{Z}}
\newcommand{\ccC}{\mathcal{C}}
\newcommand{\cE}{\mathcal{E}}
\newcommand{\cL}{\mathcal{L}}
\newcommand{\cF}{\mathcal{F}}
\newcommand{\cO}{\mathcal{O}}
\newcommand{\cS}{\mathcal{S}}
\newcommand{\cM}{\mathcal{M}}
\newcommand{\bbN}{\mathbb{N}}
\newcommand{\bbZ}{\mathbb{Z}}
\newcommand{\bbC}{\mathbb{C}}
\newcommand{\bbD}{\mathbb{D}}
\newcommand{\bbK}{\mathbb{K}}
\newcommand{\coC}{\mathbf{C}}
\def\Spec{{\rm Spec}}
\def\dres{\partial{\rm Res}}
\def\res{{\rm Res}}
\def\sdres{{\rm s}\partial{\rm Res}}
\def\ord{\rm ord}
\def\mod{\textrm{ mod }\,}
\def\Ker{\rm Ker}
\def\gcrd{\rm gcrd}
\def\mod{\rm mod}
\def\para{\vspace{1.5 mm}}
\def\gcd{\rm gcd}
\def\Im{\rm Im}
\def\BC{\texttt{BC}}
\def\e{\varepsilon_{P,Q}}
\def\veL{\varepsilon_{L}}
\def\ec{e_{P,Q}}
\def\eL{e_{L}}
\def\r{{\bf r}}
\def\s{{\rm S}}
\def\0{{\bf 0}}
\newtheorem{thm}{Theorem}[section]
\newtheorem{lem}[thm]{Lemma}
\newtheorem{cor}[thm]{Corollary}
\newtheorem{prop}[thm]{Proposition}
\newtheorem{defi}[thm]{Definition}
\newtheorem{rem}[thm]{Remark}
\begin{document}

\title{Computing defining ideals of space spectral curves\\ for algebro-geometric third order ODOs}


\author{Sonia L. Rueda\\
Dpto. de Matem\' atica Aplicada, E.T.S. Arquitectura\\
Universidad Polit\' ecnica de Madrid.\\
Avda. Juan de Herrera 4, 28040-Madrid, Spain.\\
\tt{sonialuisa.rueda@upm.es}
\and
Maria-Angeles Zurro\\
Dpto. de Matem\' aticas\\
Universidad Aut\' onoma de Madrid\\
Campus de Cantoblanco, Madrid, Spain\\
\tt{mangeles.zurro@uam.es}
}

\date{\today}
\maketitle

\thispagestyle{empty}

\begin{abstract}

Commuting pairs of ordinary differential operators (ODOs) have been related to plane algebraic curves since the work of Burchnall and Chaundy a century ago. We introduce now the concept of Burchnall-Chaundy (BC) ideal of a commuting pair, as the ideal of all constant coefficient bivariate polynomials satisfied by the pair. We prove this prime ideal to be equal to the radical of a differential elimination ideal and the defining ideal of a plane algebraic curve, the spectral curve of a commuting pair.

The ODOs of this work have coefficients in an arbitrary differential field with field of constants algebraically closed and of characteristic zero. 
Motivated by the extension of the recently introduced Picard-Vessiot theory for spectral problems {$L(y)=\lambda y$}, where $\lambda$ is an algebraic parameter, we also define the BC ideal of an algebro-geometric third order operator $L$. 
This allows a constructive proof of a famous theorem by I. Schur, establishing an isomorphism between the centralizer of $L$ and the coordinate ring of a space algebraic curve, that we define as the spectral curve of $L$ and whose defining ideal is the BC ideal of $L$. We provide the first explicit example of a non-planar spectral curve. We compute a set of generators of the defining ideal of this curve by means of differential resultants and define a new coefficient field determined by the spectral curve, to effectively compute an {intrinsic} right factor of $L-\lambda$.

\end{abstract}

\textit{Keywords:}
Ordinary differential operators,  differential ideal, differential resultant, factorization, spectral curve.

\medskip

\textsc{\href{https://zbmath.org/classification/}{MSC[2020]}:}  13N10, 13P15, 12H05


\section{Introduction}\label{sec-intro}

A famous theorem by I. Schur \cite{Sch} implies that the centralizer of an ordinary differential operator, with analytic coefficients, has as quotient field a function field of one variable. Therefore such centralizers can be seen as affine rings of algebraic curves, \cite{PRZ2019}. In this work we will call these curves {\it spectral curves}. 

We consider ordinary differential operators with coefficients in an arbitrary differential field $\Sigma$, whose field of constants $\coC$ is algebraically closed and of zero characteristic. 
We use differential algebra, differential Galois theory and algebraic geometry to give a constructive proof of the following theorem.

\begin{thm}\label{thm-A}
    The centralizer of a third order ordinary differential operator $L$, with non constant coefficients in $\Sigma$, is either isomorphic to a polynomial ring in one variable or to the coordinate ring $\coC[\Gamma]$ of an irreducible (space) algebraic curve $\Gamma$.
\end{thm}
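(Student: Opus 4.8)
The plan is to show that $\cC(L)$, the centralizer of $L$ in the Ore domain $\Sigma[\partial]$, is a finitely generated $\coC$-algebra which is an integral domain of Krull dimension one, generated by at most three elements over $\coC$; then $\Gamma := \Spec\cC(L)$ is an irreducible affine curve embeddable in $\mathbb{A}^3$, and the exceptional case of the statement corresponds to $\cC(L)=\coC[L]\cong\coC[x]$.

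First I would recall the classical structural facts about the centralizer of an ODO, in the differential-algebraic formulation valid over $\Sigma$ with $\coC$ algebraically closed of characteristic zero (as developed in \cite{PRZ2019}): $\cC(L)$ is a commutative ring; it is torsion free of rank at most $\ord L=3$ as a module over the polynomial ring $\coC[L]$ — here $\coC[L]\cong\coC[x]$ precisely because $L$ has non constant coefficients and is therefore transcendental over $\coC$ — and, $\coC[L]$ being a principal ideal domain, $\cC(L)$ is in fact a \emph{free} $\coC[L]$-module of some rank $r\in\{1,2,3\}$. Fix a $\coC[L]$-basis $1=e_0,e_1,\dots,e_{r-1}$ of $\cC(L)$.

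The two algebraic properties I need follow quickly from this. Since $\cC(L)$ is a finite $\coC[L]$-module it is integral over $\coC[L]$, hence of Krull dimension one; and since every element of $\cC(L)$ is a $\coC[L]$-linear combination of $e_0,\dots,e_{r-1}$, the $\coC$-algebra $\cC(L)$ is generated by $L$ together with the at most two operators $e_1,\dots,e_{r-1}$, i.e.\ by at most three elements. Moreover $\cC(L)\subseteq\Sigma[\partial]$ is an integral domain. Writing $\cC(L)\cong\coC[x_0,x_1,x_2]/I$ for the corresponding presentation, $I$ is then a prime ideal, of height two by the dimension count, and $\Gamma=V(I)\subseteq\mathbb{A}^3$ is an irreducible affine curve whose coordinate ring is $\cC(L)$. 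If $r=1$ then $\cC(L)=\coC[L]$ is a polynomial ring in one variable (the curve is a line), which is the first alternative; if $r>1$ we get a genuine affine curve, and when the numerical semigroup of orders of elements of $\cC(L)$ — a subsemigroup of $(\bbN,+)$ containing $3$ — has embedding dimension three, $\Gamma$ admits no embedding in the plane and is genuinely a space curve.

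What this skeleton does not yet provide is the \emph{constructive} content announced in the introduction, and that is where the real work lies. The main task will be to exhibit explicit generators $P,Q$ of $\cC(L)$ over $\coC[L]$, to show that the ideal of all constant-coefficient polynomial relations among $L,P,Q$ — the $\BC$ ideal of $L$ — equals the radical of the differential elimination ideal obtained by eliminating $\partial$ from $L-x_0$, $P-x_1$, $Q-x_2$ (computably, via differential resultants), and to prove that this ideal is prime of height two, so that it coincides with the defining ideal of $\Gamma$. The passage between the differential-elimination description and the purely algebraic one, together with primality, is the step I expect to be the principal obstacle; the module-theoretic part above is comparatively routine. A secondary delicate point is that the rank bound and the commutativity of $\cC(L)$, originally established analytically in the spirit of Schur \cite{Sch}, must be re-obtained over the abstract differential field $\Sigma$ with the stated hypotheses on $\coC$, which is where the Picard–Vessiot and differential Galois machinery enters.
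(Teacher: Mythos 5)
Your proposal is correct as a proof of the statement as literally formulated, but it reaches the conclusion by a genuinely softer route than the paper. The shared part is the Goodearl input ($\cC(L)$ is a commutative domain and a free $\coC[L]$-module with basis $\{1,A_1,A_2\}$) and the presentation of $\cC(L)$ as $\coC[\lambda,\mu_1,\mu_2]/I$ with $I$ the prime kernel of the evaluation map sending $\lambda,\mu_1,\mu_2$ to $L,A_1,A_2$ --- this is exactly the paper's $\BC(L)$, Lemma \ref{lem-prime3} and Theorem \ref{thm-CL}. Where you diverge is in establishing that the quotient is one-dimensional: you get this for free from integrality of $\cC(L)$ over $\coC[L]\cong\coC[x]$ together with catenarity of the polynomial ring, whereas the paper never argues with Krull dimension abstractly. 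Instead it exhibits generators $\BC(L)=(f_1,f_2,f_3)$, with $f_1,f_2$ the (radicals of the) differential resultants $\dres(L-\lambda,A_i-\mu_i)$ and $f_3$ that of $\dres(A_1-\mu_1,A_2-\mu_2)$, and deduces that $V(\BC(L))$ is a curve by trapping it inside the curve $V(f_1)\cap V(f_2)$ and controlling the projection onto the plane $\lambda=0$ with a Gr\"obner basis (Theorem \ref{thm-BCL}, Corollary \ref{cor-Gamma}). Your argument is shorter and suffices for Theorem \ref{thm-A}; the paper's buys the explicit defining ideal, which is what the rest of the paper (the elimination-ideal theorems, the differential field $\Sigma(\Gamma)$, the intrinsic right factor) actually consumes. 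You correctly identified that as the locus of the real work, so the deferral is honest rather than a hidden gap.

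Two small points. By Goodearl the rank of $\cC(L)$ as a $\coC[L]$-module \emph{divides} $\ord(L)=3$, so $r\in\{1,3\}$ rather than $\{1,2,3\}$; this does not affect your argument. And your closing worry is already discharged in the literature: commutativity and freeness of the centralizer are proved in \cite{Good} purely algebraically, for any differential ring without nonzero nilpotents whose constants form a field of characteristic zero, with no analytic or Picard--Vessiot input; the paper reproduces this in Appendix \ref{app-Centralizers}. The differential Galois machinery enters only later, in the resultant and elimination arguments.
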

\noindent A differential operator $L$ whose centralizer is not a polynomial ring in one variable is often called {\it algebro-geometric} \cite{We}, \cite{Grig}. A fundamental goal driving this work is the development of a Picard-Vessiot theory for spectral problems $(L-\lambda)(y)=0$, in the case of an algebro-geometric differential operator $L$ and an algebraic parameter $\lambda$ over $\Sigma$. 

The authors defined spectral Picard-Vessiot fields for algebro-geometric Schrödinger operators in \cite{MRZ2} and provided algebraic tools {to effectively} compute a basis of solutions for  the spectral problem in this case. { The extension to higher order of the techniques developed for second-order operators, posts several important issues.  {The main problem treated in this paper is the definition and computation of spectral curves for algebro-geometric ODOs.} 

{
Computing algebro-geometric ODOs amounts to computing Gelfand-Dikii hierarchies and solutions of them \cite{GD}, which are non trivial problems. 
It is well known that solutions of the KdV hierarchy provide algebro-geometric Schrödinger ODOs \cite{MRZ1}, \cite{MRZ2}, \cite{Veselov}. Solutions of the Boussinesq hierarchy \cite{DGU} provide algebro-geometric operators of third order, and investigating those we discovered that planar algebraic curves do not explain all nontrivial centralizers of third order ODOs. We found an example of an algebro-geometric third order ODO whose centralizer is the commutative $\coC$-algebra generated by three ODOs, \cite{HRZ2023}. Explaining the nature of this example was the seed for the results presented here. A general study of algebro-geometric differential operators of prime order is postponed until meaningful examples are found, for order five and higher.} 

{We believe that the novel results that emerge from the study of the order three case will be important for the development of a general spectral Picard-Vessiot theory. 
Specifically, our main contribution is the identification of the spectral curve of $L$, as the curve defined by its centralizer, and not by a pair of commuting ODOs. As an application, we explain how the field of functions on the spectral curve of $L$ is essential to obtain an intrinsic right factor of $L-\lambda$. 
To the best of our knowledge, this fact is established for the first time in this work.}

\medskip

Planar spectral curves have been commonly defined  for pairs of commuting differential operators $P$ and $Q$, since the seminal work of Burchnall and Chaundy \cite{BC1}, see also \cite{Mum2}, \cite{K78} and the recent review \cite{Zheglov}. Most results have been obtained in the case of pairs $P,Q$ with coprime orders, called the rank $1$ case, see for instance \cite{Zheglov}. Without this restriction, hence for any rank $r = \gcd ((\ord (P) , \ord (Q))$, we define an isomorphism between the coordinate ring of the planar spectral curve (an irreducible plane algebraic curve) $\Gamma_{P,Q}$ and the commutative ring of differential operators $\coC[P,Q]$. Namely, we prove that the ideal of the curve  $\Gamma_{P,Q}$ equals the ideal of all constant coefficient polynomials $g(\lambda,\mu)$ that are satisfied by the differential operators, commonly denoted $g(P,Q)=0$ and known as {\it Burchnall and Chaundy polynomials}. We introduce in this paper the notion of  Burchnall and Chaundy ideal of a pair $P,Q$ and denote it by $\BC(P,Q)$. Since $P$ and $Q$ are differential operators in an euclidean domain the ideal $\BC(P,Q)$ is naturally proved to be a prime ideal. These results are included in Theorem \ref{thm-CPQ}. From now on we will call $\Gamma_{P,Q}$ the {\it spectral curve of the pair}  $P,Q$ to distinguish it from the spectral curve associated with the centralizer of an ordinary differential operator.

It was known by E. Previato \cite{Prev} and G. Wilson \cite{W1985}, in the case of differential operators with analytic coefficients, that a Burchnall and Chaundy polynomial for a commuting pair $P,Q$ can be obtained computing the differential resultant $\dres(P-\lambda,Q-\mu)$ of $P-\lambda$ and $Q-\mu$. In particular, they proved that such resultant is a constant coefficient polynomial. 
We generalize this result to differential operators with coefficients in $\Sigma$ in Theorem \ref{thm-Prev}. But we go further proving that the ideal of all Burchnall and Chaundy polynomials for the pair $P,Q$ is generated by an irreducible polynomial $f(\lambda,\mu)$, the radical of $\dres(P-\lambda,Q-\mu)$, see Theorem \ref{thm-BCf}. For this purpose, we use the differential elimination ideal 
determined by $P-\lambda$ and $Q-\mu$,
\[\cE(P-\lambda, Q-\mu)=(P-\lambda, Q-\mu)\cap \Sigma[\lambda,\mu],\]
the ideal of all differential operators generated by $P-\lambda$ and $Q-\mu$ in $\Sigma[\lambda,\mu]$, for which the derivation has been eliminated. Our main result about arbitrary commuting pairs of ODOs establishes the intrinsic nature of the  differential resultant of $P-\lambda$ and $Q-\mu$. We prove the following result in Theorem \ref{thm-BCPQ}.
\begin{thm}\label{thm-B}
    Given ordinary differential operators $P$ and $Q$ with coefficients in $\Sigma$, let
    $f(\lambda,\mu)$ be the radical of $\dres(P-\lambda,Q-\mu)$.
    The following statements hold:
    \begin{enumerate}
        \item The ideal $(f)$ generated by $f$ in $\coC[\lambda,\mu]$ equals
 $\BC(P,Q)$.

        \item The differential ideal $[f]$ generated by $f$ in $\Sigma[\lambda,\mu]$ equals the radical of the elimination ideal $\cE(P-\lambda, Q-\mu)$. 
    \end{enumerate}
\end{thm}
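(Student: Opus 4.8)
The plan is to treat the two statements together, extracting from the differential resultant $\dres(P-\lambda,Q-\mu)$ the polynomial $h(\lambda,\mu)$ (a constant-coefficient polynomial by Theorem~\ref{thm-Prev}) with radical $f$, and to show both ideals sit between $(f)$, resp.\ $[f]$, and their respective radicals. The key inputs I would invoke are: (i) Theorem~\ref{thm-Prev}, which gives $h=\dres(P-\lambda,Q-\mu)\in\coC[\lambda,\mu]$ and $h\in(P-\lambda,Q-\mu)$, so $h$ is a Burchnall--Chaundy polynomial, i.e.\ $h\in\BC(P,Q)$ and $h\in\cE(P-\lambda,Q-\mu)$; (ii) Theorem~\ref{thm-CPQ}, giving that $\BC(P,Q)$ is a prime ideal of $\coC[\lambda,\mu]$; and (iii) Theorem~\ref{thm-BCf}, identifying $\BC(P,Q)=(f)$ with $f$ irreducible. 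In fact, granting Theorem~\ref{thm-BCf}, statement~(1) is immediate: $f=\sqrt{h}$ and $(f)=\BC(P,Q)$ by that theorem; so the real content to write out here is statement~(2), plus checking the pieces fit together.

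For statement~(2), I would argue two inclusions. First, $[f]\subseteq\sqrt{\cE(P-\lambda,Q-\mu)}$: since $h\in\cE(P-\lambda,Q-\mu)$ by (i) above, and $f^N=h\cdot(\text{unit in }\coC)$ up to a constant factor for some $N$ (as $f$ is the radical of $h$ over the algebraically closed constant field $\coC$, so $h=c\,f^N$ with $c\in\coC^\times$), we get $f^N\in\cE\subseteq\Sigma[\lambda,\mu]$. The elimination ideal $\cE$ is an honest ideal of $\Sigma[\lambda,\mu]$, hence $f\in\sqrt{\cE}$; but $\sqrt{\cE}$, being the radical of an ideal that is closed under no derivation a priori, still need not be differential --- so here I would instead pass to the differential ideal: $f\in\sqrt{\cE(P-\lambda,Q-\mu)}$ and, because $\cE\subseteq[P-\lambda,Q-\mu]\cap\Sigma[\lambda,\mu]$ and the radical of a differential ideal is differential (Ritt; characteristic zero), $\sqrt{[P-\lambda,Q-\mu]}$ is a radical differential ideal containing $f$, hence containing $[f]$. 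One checks $\sqrt{[P-\lambda,Q-\mu]}\cap\Sigma[\lambda,\mu]=\sqrt{\cE(P-\lambda,Q-\mu)}$ by a standard contraction-of-radicals argument, giving $[f]\subseteq\sqrt{\cE(P-\lambda,Q-\mu)}$.

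For the reverse inclusion $\sqrt{\cE(P-\lambda,Q-\mu)}\subseteq[f]$, I would first show $[f]$ is a \emph{radical} differential ideal of $\Sigma[\lambda,\mu]$: indeed $f\in\coC[\lambda,\mu]$ is a constant, so $[f]=(f)\Sigma[\lambda,\mu]$ is just the extension of the principal ideal, and since $f$ is irreducible over the algebraically closed field $\coC$ it stays squarefree --- in fact prime --- over $\Sigma$ (the extension $\coC\subseteq\Sigma$ being a field extension, $\Sigma[\lambda,\mu]/(f)=\Sigma\otimes_{\coC}\coC[\lambda,\mu]/(f)$ is a domain because $\coC[\Gamma_{P,Q}]$ is a geometrically integral $\coC$-algebra, $\coC$ being algebraically closed). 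So $[f]$ is prime, hence radical, hence it suffices to show $\cE(P-\lambda,Q-\mu)\subseteq[f]$. Take $g\in\cE$; then $g\in(P-\lambda,Q-\mu)$, so $g$ vanishes ``on the pair'', and by the BC-ideal description its constant part lies in $\BC(P,Q)=(f)$. More carefully: reduce $g$ modulo $f$ and argue that any element of $\cE$ not in $[f]$ would produce, via the substitution $\lambda\mapsto P,\mu\mapsto Q$ valid on the quotient $\Sigma[\lambda,\mu]/[f]\cong\Sigma[P,Q]$ (an isomorphism because $[f]$ is precisely the ideal of relations --- this is where Theorem~\ref{thm-CPQ}/Theorem~\ref{thm-BCf} and the flatness of $\Sigma/\coC$ enter), a nonzero differential operator of negative order, a contradiction.

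The main obstacle I expect is precisely this last step: controlling the \emph{differential} elimination ideal rather than its intersection with constants. Showing $[f]$ is radical (equivalently, that $f$ stays squarefree over $\Sigma$) is clean once one notes $\coC$ is algebraically closed; the delicate point is the inclusion $\cE(P-\lambda,Q-\mu)\subseteq[f]$, i.e.\ that no ``extra'' polynomial relation with coefficients in $\Sigma$ survives elimination beyond the differential consequences of $f$. I would handle this by establishing the ring isomorphism $\Sigma[\lambda,\mu]/[f]\xrightarrow{\ \sim\ }\Sigma\otimes_{\coC}\coC[P,Q]$ compatible with the derivation (the derivation acting trivially on the second factor since $P,Q$ commute with... no --- rather, the derivation on $\Sigma[\lambda,\mu]$ restricts to the derivation of $\Sigma$ with $\lambda,\mu$ constants, and $[f]=(f)$ is generated by constants, so the quotient inherits a derivation with $\bar\lambda,\bar\mu$ constant), and then identifying $\cE/[f]$ with the kernel of the composite $\Sigma\otimes_{\coC}\coC[P,Q]\to\Sigma[\partial]$, which is zero because $\coC[P,Q]\hookrightarrow$ the centralizer $\hookrightarrow\Sigma[\partial]$ and $\Sigma$ is flat (indeed free) over $\coC$.
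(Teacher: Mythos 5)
Your skeleton matches the paper's: one inclusion comes from $h=\dres(P-\lambda,Q-\mu)\in\cE(P-\lambda,Q-\mu)$, the other from primality of $[f]$ together with $\cE(P-\lambda,Q-\mu)\subseteq[f]$; and part 1 is indeed just Theorem \ref{thm-BCf}. Your primality argument for $[f]$ (an irreducible $f$ over the algebraically closed field $\coC$ stays prime in $\Sigma[\lambda,\mu]$ by geometric integrality) is correct and is a genuinely more algebraic alternative to the paper's Theorem \ref{thm-primo}, which gets primality from infinitely many eigenfunctions on the spectral curve. The problem is the decisive inclusion $\cE(P-\lambda,Q-\mu)\subseteq[f]$, where your argument has two real gaps --- precisely the two places where the paper has to do work, in Lemma \ref{thm-EinBC} and Lemma \ref{lem-Ker}.

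First, you assert that $g\in\cE(P-\lambda,Q-\mu)$ ``vanishes on the pair,'' i.e.\ $g(P,Q)=0$. This does not follow by substituting $\lambda\mapsto P$, $\mu\mapsto Q$ into $g=C(P-\lambda)+D(Q-\mu)$: the map $\e$ is only $\Sigma$-linear, not a ring homomorphism on $\Sigma[\lambda,\mu][\partial]$ (elements of $\Sigma$ do not commute with $P$, $Q$ or $\partial$), so membership in the left ideal cannot be formally specialized. The paper proves $\cE(P-\lambda,Q-\mu)\subseteq\Ker(\e)$ in Lemma \ref{thm-EinBC} by evaluating at infinitely many points of the spectral curve and letting the finite-order operator $g(P,Q)$ annihilate infinitely many linearly independent common eigenfunctions. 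Second, your injectivity claim for $\Sigma\otimes_{\coC}\coC[P,Q]\to\Sigma[\partial]$ ``by flatness'' fails: flatness preserves injectivity of $\coC$-linear maps after tensoring with $\Sigma$, but says nothing about the multiplication map into $\Sigma[\partial]$; a $\coC$-linearly independent family of operators need not remain $\Sigma$-linearly independent (e.g.\ $\partial$ and $a\partial$ for non-constant $a\in\Sigma$). What is actually needed is that the monomials $P^iQ^j$ with $j<\deg_\mu f$ are $\Sigma$-linearly independent, and the paper's Lemma \ref{lem-Ker} proves this by an order count: the orders $nd_j+jm$ of the nonzero terms of a normal form are pairwise distinct (a gcd argument on $n$, $m$ and $\deg_\mu f$), so \eqref{eq-ord-null} forces every term to vanish. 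Without an argument of this kind your proof of $\Ker(\e)=[f]$, and hence of statement 2, is incomplete.
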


 For an algebraically closed field of constants, the classical theory of Picard and Vessiot \cite{VPS} allows to formally manage solutions. It will be important  to give formal proofs of our results. Specializing the parameter $\lambda$ to $\lambda_0\in \coC$, the  Picard-Vessiot extension of $\Sigma$ for $(P-\lambda_0)(y)=0$ is the splitting field for $(P-\lambda_0)(y)=0$. If algebraic variables $\lambda$ and $\mu$ over $\Sigma$ are considered, the differential operators $P-\lambda$ and  $Q-\mu$ belong to the differential field $\cF=\Sigma(\lambda,\mu)$ and  to use classical Picard-Vessiot theory, the algebraic closure of $\cF$ can be considered, whose field of constants is algebraically closed.

\medskip

Centralizers are maximal commutative rings, \cite{PRZ2019}.
In some cases, the centralizer $\cC(L)$ of a differential operator $L$ is the affine ring of a planar curve, as in the case of algebro-geometric Schrödinger operators where $\cC(L)=\coC[L,A]$. 
Let us fix now an algebro-geometric differential operator $L$ of third order with coefficients in $\Sigma$.
One of the motivations to give a constructive proof of Theorem \ref{thm-A} is to show that in general $\cC(L)$ is not the coordinate ring of planar curve. 
As a consequence of results of K. Goodearl in \cite{Good}, it has the structure of a free $\coC[L]$-module 
\[\cC(L)=\coC[L,A_1,A_2]=\coC[L]\oplus \coC[L]A_1\oplus \coC[L]A_2,\]
where $A_i$ is of minimal order congruent with $i\ (\mod\ 3)$. 
Observe that $\cC(L) $ contains the rank one algebras $\coC[L,A_i] $, but also $\coC[A_1 ,A_2 ]$ whose rank may be grater than one, {being the rank of a set the greatest common divisor of the orders of its elements, see \cite{PRZ2019}.}  We introduce the notion of Burchnall-Chaundy ideal $\BC(L)$ of $L$ to explain the ring multiplicative
 structure of $\cC(L)$. This is the ideal of all polynomials $g(\lambda,\mu_1,\mu_2)$ in $\coC[\lambda,\mu_1,\mu_2]$ such that $g(L,A_1,A_2)=0$. 
We prove that $\BC(L)$ is a prime ideal that naturally contains the Burchnall-Chaundy ideals of pairs of differential operators in the centralizer of $L$. Three BC ideals are of crucial importance in this work
\[\BC(L,A_1)=(f_1),\ \BC(L,A_2)=(f_2),\ \BC(A_1,A_2)=(f_3),\]
all of them contained in $\BC(L)$, but even more so, we conclude that $\BC (L)= (f_1 , f_2 , f_3 )$, see Corollary \ref{cor-Gamma}. The constructive proof of Theorem \ref{thm-A} is achieved in Corollary \ref{cor-Gamma}, where $\cC (L)$ is shown to be isomorphic to the coordinate ring 
\[\coC[\Gamma]=\frac{\coC[\lambda,\mu_1,\mu_2]}{\BC(L)}=\frac{\coC[\lambda,\mu_1,\mu_2]}{(f_1,f_2,f_3)}\]
of an irreducible space algebraic curve $\Gamma$ in $\coC^3$, {that we define as} the {\it spectral curve of $L$}.

\medskip

\medskip

An important consequence of the constructive proof of Theorem \ref{thm-A} is to discover the appropriate coefficient field where a third order algebro-geometric operator $L-\lambda$ would have an intrinsic right factor. 
Recall that $\lambda$ is not a free parameter, it is governed by the space spectral curve $\Gamma$ of $L$.  Extending the ideal $\BC(L)$ to the ring of polynomials in three variables with coefficients in $\Sigma$, we can think of an extended curve  whose ring of regular functions is 
\[
\Sigma[\Gamma]=\frac{\Sigma[\lambda,\mu_1,\mu_2]}{[\BC(L)]}.
\]
Over its quotient field $\Sigma(\Gamma)$, a factorization of $L-\lambda$ is guaranteed,
\[
L-\lambda = N\cdot (\partial+\phi),
\]
where $\partial+\phi$ can be computed by means of the greatest common right divisors
$\gcrd(L-\lambda,A_1-\mu_1)$ or $\gcrd(L-\lambda,A_2-\mu_2)$, which are proved to coincide over $\Sigma(\Gamma)$.
The precise statement is contained in Theorem \ref{thm-factor}. We can think of $\partial+\phi$ as a global factor,  since  for almost every  point $P_0 \in \Gamma$ our methods produce a factorization of   $L-\lambda_0 = N_{P_0 } \cdot (\partial + \phi (P_0 ))$. That is why we call $\partial +\phi$ the intrinsic right factor of the  operator $L-\lambda$.

\bigskip

It remains as a future project to define and prove existence of the spectral Picard-Vessiot field of $L-\lambda$, for an algebro-geometric differential operator $L$. This will be the generalization of the spectral Picard-Vessiot fields for Schrödinger operators introduced in \cite{MRZ2}. It would be a differential field extension of $\Sigma(\Gamma)$, the minimal extension containing all the solutions, and it requires a full factorization of $L-\lambda$ over $\Sigma(\Gamma)$. This requires further investigations where the parametric Picard-Vessiot theory, introduced by Cassidy and Singer in \cite{CS}, and studied in \cite{Arr} and \cite{MO2018} may be relevant. 

\medskip

{\it The paper is organized as follows}. In Section \ref{sec-centralizers} we establish the appropriate framework to study  algebro-geometric differential operators, in the case of a third order operator $L$, whose definition is established in Corollary \ref{cor-AG}. The 
novel notion of  Burchnall-Chaundy (BC) ideal, of a pair $P, Q$ of ODOs and the  BC ideal $\BC(L)$ of a fixed differential operator $L$, are introduced in Section \ref{sec-BC}. These BC ideals are proved to be prime ideals describing the algebra $\coC [P, Q]$ and the centralizer $\cC(L)$, as coordinate rings of algebraic curves, respectively in theorems \ref{thm-CPQ} and \ref{thm-CL}. These allow the definitions of the spectral curves of a pair $P,Q$ and of a third order operator $L$. 

Sections  \ref{sec-generators} and \ref{sec-cenCoord} are dedicated to compute generators of these BC ideals.  Section \ref{sec-generators} contains the proof of Theorem \ref{thm-B}, the first part in Theorem \ref{thm-BCf} and the second in Theorem \ref{thm-BCPQ}. 
The effective description of $\BC(L)$ is achieved by means of any basis of the centralizer $\cC(L)$ and the computation of differential resultants. The choice of a basis of $\cC(L)$ is discussed in Section \ref{sec-normalizedbasis}. We use Gr\" obner bases in Section \ref{sec-genBC} to control the projection of the space spectral curve onto the plane $\lambda=0$ and obtain a set of generators of $\BC(L)$. 

In Section \ref{sec-factoring-Bsq}, we show that the algebro-geometric hypothesis implies the existence of a right factor $\partial+\phi$ of $L-\lambda$, over a new differential field of coefficients $\Sigma(\Gamma)$, defined in Section \ref{sec-DiffField}. The computation of $\phi$ is delicate and it is achieved by means of differential subresultants in Section \ref{sec-intrinsicRFactor}.
Finally, Section \ref{sec-Examples} contains the first computed example of a non planar spectral curve. It is used {to illustrate how} to globally factor the linear operator $L-\lambda$. All computations in this work are made with Maple \cite{Maple}.


To make this work as self contained as possible, two appendices are included. Appendix \ref{app-Centralizers} contains the results needed to understand centralizers of differential operators, giving special importance to Goodearl's construction of a basis of $\cC(L)$. In Appendix \ref{sec-DiffRes}, we review definitions and proofs of results on differential resultants and subresultants, in relation with the factorization of ODOs.  

\medskip


\noindent {\bf Notation.} For concepts in differential algebra we refer to \cite{VPS}, \cite{CH},  or \cite{Morales}.
A differential ring is a ring $R$ with a derivation $\partial$ on $R$. A differential ideal $I$ is an ideal of $R$ invariant under the derivation. 
We denote by
\begin{equation*}
    Const(R) = \{ r\in R \ | \ \partial (r) =0\} \ ,
\end{equation*}
which is called the ring of constants of $R$. 
Assuming that $R$ is a differential domain, its field of fractions $Fr (R)$ is a differential field with extended derivation 
$$\partial (f/g) = ( \partial (f)g-f\partial (g))/g^2.$$
A differential field $(\Sigma, \partial )$ is a differential ring which is a field. Given $a\in \Sigma$ we denote $\partial(a)$ by $a'$. Note that $Const(\Sigma)$ is a field whenever $\Sigma$ is. We assume that $C:=Const(\Sigma)$ is algebraically closed and has characteristic $0$.

\medskip

Let us consider algebraic variables $\lambda$ and $\mu$ with respect to $\partial$. Thus $\partial(\lambda) = 0$ and $\partial(\mu) = 0$ and we can extend the derivation $\partial$ of $\Sigma$ to the polynomial ring $\Sigma[\lambda, \mu]$, {having ring of constants $\coC[\lambda,\mu]$}.  {Similarly, we consider algebraic variables  $\mu_1$ and $\mu_2$ with respect to $\partial$ and extend the derivation to the polynomial ring $\Sigma[\lambda, \mu_1,\mu_2]$, {having ring of constants $\coC[\lambda,\mu_1,\mu_2]$}.}

\medskip

\noindent We denote by $\mathbb{N}$ the set of positive integers including $0$. {Notation regarding ODOs and their centralizers is included in Appendix  \ref{app-Centralizers}.} {The structure of the ring of differential operators $\Sigma[\partial]$ is reviewed in Appendix \ref{sec-DiffRes}.}

\section{Algebro-geometric   ODOs}\label{sec-centralizers}

Let us consider a third order ordinary differential operator $L$ in $\Sigma[\partial]$, with non constant coefficients, that is
$L\in \Sigma[\partial]\backslash \coC[\partial]$. We will assume throughout this work, that $L$  has a non trivial centralizer, as in Definition \ref{def-nontrivial}, this means that the centralizer of $L$ in $\Sigma [\partial]$
\begin{equation*}
    \cC(L)=\{A\in \Sigma [\partial]\mid LA=AL\},
\end{equation*}
does not equal the polynomial ring $\coC[L]$.
Therefore we have a chain of strict inclusions
\[\coC[L]\subset \cC(L)\subset \Sigma [\partial].\]
In Appendix \ref{app-Centralizers}, we review results of K. Goodearl in \cite{Good} regarding the structure of centralizers. 

\medskip

By Theorem \ref{thm-Appcen},1 the centralizer $\cC(L)$ is a free $\coC [L]$-module of rank $3$. Let $\cB(L)=\{1,A_1,A_2\}$ be a basis of $\cC(L)$ as a $\coC [L]$-module. The construction of a basis is reviewed in Appendix \ref{thm-Appcen}. Each $A_i$ is a monic operator in $\cC(L)\backslash \coC[L]$ of minimal order $o_i:=\ord(A_i)\equiv i (\mod \ 3 )$. Therefore
\begin{equation}\label{eq-cen_mod}
\cC(L)    =\coC[L]\oplus \coC[L]A_1\oplus \coC[L]A_2 =\{p_0(L)+p_1(L)A_{1}+p_2(L)A_{2}\mid p_i \in \coC[\lambda]\}.  
\end{equation}
From the composition of the basis we obtain the following result.

\begin{cor}\label{cor-AG}
Let $L$ be a third order operator in $\Sigma [\partial]\backslash \coC[\partial]$. The following statements are equiva\-lent:
\begin{enumerate}
    \item $L$ has a nontrivial centralizer $\cC(L)\neq \coC[L]$.
    
    \item There exists an operator $A$ in $\Sigma[\partial]$ of order $m$, relatively prime with $3$, such that $[L,A]=0$.
\end{enumerate}
\end{cor}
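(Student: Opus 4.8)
The plan is to prove the equivalence $(1)\Leftrightarrow(2)$ directly, using the structural description of the centralizer recalled in~\eqref{eq-cen_mod}. The implication $(1)\Rightarrow(2)$ is the substantive direction, and it is essentially a consequence of the basis theorem (Theorem~\ref{thm-Appcen}, Goodearl): if $\cC(L)\neq\coC[L]$, then the free $\coC[L]$-module decomposition $\cC(L)=\coC[L]\oplus\coC[L]A_1\oplus\coC[L]A_2$ forces at least one of $A_1$, $A_2$ to actually exist as an operator in $\Sigma[\partial]\setminus\coC[\partial]$ — otherwise the module would have rank $1$ over $\coC[L]$ and the centralizer would reduce to $\coC[L]$. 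Pick such an $A_i$; its order $o_i$ is congruent to $i\pmod 3$ with $i\in\{1,2\}$, hence $o_i$ is not a multiple of $3=\ord(L)$. I would then argue that $\gcd(o_i,3)=1$: since $3$ is prime, the only way $\gcd(o_i,3)\neq 1$ is $3\mid o_i$, which is excluded. So $A:=A_i$ has order $m=o_i$ relatively prime to $3$ and commutes with $L$ by construction, giving $[L,A]=0$.

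For $(2)\Rightarrow(1)$, I would suppose such an operator $A$ of order $m$ with $\gcd(m,3)=1$ exists and show $A\notin\coC[L]$, which immediately gives $\cC(L)\supsetneq\coC[L]$ since $A\in\cC(L)$. The point is that every nonzero element of $\coC[L]$ has order a multiple of $3$ (it is a polynomial in $L$ with constant coefficients, and $\ord$ of such a polynomial of degree $d$ is $3d$). Since $\gcd(m,3)=1$ forces $m$ to not be a multiple of $3$ — in particular $m\neq 0$, so $A$ is non-constant — the operator $A$ cannot lie in $\coC[L]$. Hence $\cC(L)\neq\coC[L]$, i.e. $L$ has a nontrivial centralizer in the sense of Definition~\ref{def-nontrivial}.

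The main obstacle, and the place where one must be careful, is the $(1)\Rightarrow(2)$ direction: one needs that a nontrivial centralizer of a \emph{third order} operator genuinely contains an element whose order is coprime to $3$, rather than merely an element of order a multiple of $3$. This is exactly where the rank-$3$ freeness from Goodearl's theorem is essential — a priori one could imagine a nontrivial centralizer generated over $\coC[L]$ only by operators of order divisible by $3$, but the basis theorem rules this out because the minimal-order generators $A_1$, $A_2$ are pinned to the residue classes $1$ and $2$ modulo $3$. I would make sure the statement of Theorem~\ref{thm-Appcen} as invoked includes that these $A_i$ exist precisely when the module has rank $>1$, i.e. precisely in the nontrivial case, so that no gap remains. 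Everything else (the order-additivity of composition, primality of $3$) is routine.
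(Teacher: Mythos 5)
Your proposal is correct and follows essentially the same route as the paper, which derives the corollary directly from Goodearl's basis theorem: the paper itself gives no written proof beyond the remark that the result follows ``from the composition of the basis,'' and your argument is precisely the intended one (rank $1$ or $3$ as a $\coC[L]$-module, with rank $3$ forcing generators $A_1,A_2$ of orders $\equiv 1,2\pmod 3$, hence coprime to $3$; conversely any element of $\coC[L]$ has order divisible by $3$). The only point worth noting is that in the nontrivial case both $A_1$ and $A_2$ necessarily exist, since the set of orders modulo $3$ forms a subgroup of $\bbZ/3\bbZ$, so it is either $\{0\}$ or everything — but ``at least one'' is all you need.
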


Differential operators satisfying the second statement of Corollary \ref{cor-AG} are usually called {\sf algebro-geometric operators}, see for instance \cite{We} or \cite{Grig}. This statement   highlights that $\cC(L)$ contains a pair of operators $L, A$ of rank one, being the rank of the pair the greatest common divisor of their orders. A discussion on the rank of a set of differential operators appears in \cite{PRZ2019}. 

\medskip

{In addition, $\cC(L)$ is also proved to be a commutative domain, see Appendix \ref{app-Centralizers}}. In fact cen\-tralizers $\cC(L)$ are maximal commutative subrings of $\Sigma [\partial]$, see \cite{PRZ2019}.
So, given a differential operator $M$ that commutes with $L$, we have the sequence of inclusions 
\begin{equation*}
\coC [L]\subseteq \coC [L,M] \subseteq \cC(L) ,  
\end{equation*} 
and all of them could be strict. In the case of a third order operator the following ring diagram of inclusions illustrates the ring structure of $\cC (L)$.
\begin{equation}\label{diagram}
\begin{tikzcd}
                                         &  & {\coC[L ,A_{1}]} \arrow[rrd]   &  &                  \\
{\coC[L ]} \arrow[rru]  \arrow[rrd] &  & {\coC[A_{1} ,A_{2}]} \arrow[rr] &  & {\coC[L ,A_{1},A_{2} ]}=\cC(L) \\
                                          &  & {\coC[L ,A_{2}]} \arrow[rru]   &  &   
\end{tikzcd}
\end{equation}

\begin{rem}\label{rem-BCpair}
    Making use of the ring structure of $\cC(L)$, it may happen that $A_1$ equals a polynomial in $A_2$ or vice versa, $A_1=q(A_2)$, with  $q$ a univariate polynomial. In this situation $\cC(L)=\coC [L,A_2]$. For instance, this is the case whenever $\ord (A_2 )=2$, then $A_1=A_2^2$ has order $4$ and $\cC(L)=\cC(A_2)$ is the centralizer of a second order operator.
\end{rem}

Let us denote by $\Sigma((\partial^{-1}))$ the ring of pseudo-differential operators in $\partial$ with coefficients in $\Sigma$, defined as \cite{Good}
\[\Sigma((\partial^{-1}))=\left\{\sum_{i=-\infty}^n a_i\partial^i\mid a_i\in \Sigma, n\in\bbZ\right\},\]
where $\partial^{-1}$ is the inverse of $\partial$ in $\Sigma((\partial^{-1}))$, $\partial^{-1}\partial=\partial\partial^{-1}=1$. Observe that 
\[\cC(L)\subset \Sigma[\partial]\subset \Sigma((\partial^{-1})).\]

The generalization of  Schur's theorem in \cite{Sch}, to differential operators with coefficients in a differential field $\Sigma$ has a long history, see for instance \cite{Good}, and the references given in Sections 3 and 4.
By \cite{Good}, Theorem 3.1 any monic third order operator $L$ has a unique cubic root up to multiplication by a cubic root of unity, denoted $L^{1/3}$, that determines the centralizer in $\Sigma((\partial^{-1}))$ of $L$,  denoted by $\cC((L))$ and equal to
\[
\cC((L))=\left\{ \sum_{j=-\infty}^N c_j (L^{1/3})^j\mid c_j\in {\bf C}\right\}.
\]
This implies that $\cC((L))$ is commutative and therefore
\[
\cC(L)=\cC((L))\cap \Sigma [\partial]
\]
is also a commutative differential domain,  see \ref{coro-domain} in the Appendix.  Moreover, as observed in \cite{PRZ2019}, Section 2.1 the centralizer $\cC(L) $ is a maximal commutative subring of $\Sigma[\partial]$  and, in a formal sense, $\Spec ( \cC(L))$ is an (abstract) algebraic curve since the transcendence degree of $\cC((L))$ over $\coC$ is $1$. In this work, we identify a defining ideal for this abstract curve. This is Corollary \ref{cor-Gamma} obtained as a consequence of Theorem \ref{thm-BCL}.

\section{Burchnall\textendash Chaundy ideals and spectral curves}\label{sec-BC}

A polynomial $f(\lambda,\mu)$ with constant coefficients satisfied by a commuting pair of differential ope\-rators $P$ and $Q$ is called a {\sf Burchnall-Chaundy  (BC) polynomial of the pair} $P$, $Q$, since the first result of this sort appeared is the $1923$ paper \cite{BC1} by Burchnall and Chaundy.  
We define in this section the ideal of all BC-polynomials associated to an arbitrary pair of commuting differential operators. For a fixed differential operator $L$ of order $3$, we also define an ideal of BC-polynomials. 

\subsection{Burchnall\textendash Chaundy ideal of a pair}

Given commuting differential operators $P$ and $Q$ in $\Sigma[\partial]$, {to avoid meaningless situations we will assume that $P,Q\notin \coC[\partial]$ and both have positive order.}
We define the ring homomorphism
\begin{equation}\label{def-ePQ}
    \ec: \coC [\lambda,\mu] \rightarrow \Sigma[\partial] \mbox{ by } \ec(\lambda)=P,\,\,\,  \ec(\mu)=Q.
\end{equation}
It is a ring homomorphism since $P$ and $Q$ commute, and they commute with the elements of $\coC$, the field of constants of $\Sigma$.
The image of $\ec$ is the $\coC$-algebra 
\[\coC[P,Q]=\left\{\sum_{i,j}  \sigma_{i,j} P^i Q^j \ \mid \ \sigma_{i,j}\in \coC \right\}.\] 
Observe that $\Ker(\ec)$ is an ideal of $\coC [\lambda,\mu]$.
In this setting, given $g\in \coC [\lambda,\mu]$ we will denote
\begin{equation}
    g(P,Q):=\ec (g).
\end{equation}
Thus a polynomial $g$ belongs to the kernel of $\ec$ if and only if $g(P,Q)=0$, that is 
\[\Ker(\ec)=\{g\in\coC [\lambda,\mu]\mid g(P,Q)=0\}.\]

\begin{defi}
Given commuting differential operators $P$ and $Q$ in {$\Sigma [\partial]\backslash \coC[\partial]$, both of positive order}, we define the {\sf BC-ideal of a pair} $P$ and $Q$ as 
\[\BC (P,Q):=\Ker(\ec) =\{g\in\coC [\lambda,\mu]\mid g(P,Q)=0\}.\]
We will call {\sf BC-polynomials} the elements of the BC-ideal.
\end{defi}

An important consequence of working with differential operators in an euclidean domain $\Sigma[\partial]$ is the following lemma.

\begin{lem}\label{lem-prime}
Given commuting differential operators $P$ and $Q$ in {$\Sigma [\partial]\backslash \coC[\partial]$, both of positive order},  the ideal $\BC(P,Q)$ is a prime ideal in $\coC [\lambda,\mu]$. 
\end{lem}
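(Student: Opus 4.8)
The goal is to show $\BC(P,Q) = \Ker(\ec)$ is prime in $\coC[\lambda,\mu]$, equivalently that $\coC[\lambda,\mu]/\Ker(\ec) \cong \coC[P,Q]$ is an integral domain. Since $\coC[P,Q]$ is by construction a subring of $\Sigma[\partial]$, it suffices to prove that $\Sigma[\partial]$ has no zero divisors, i.e. that the ring of ordinary differential operators over a differential field is an integral domain. This is the classical fact that $\Sigma[\partial]$ is a (left and right) Euclidean domain; I would invoke the structure of $\Sigma[\partial]$ reviewed in Appendix \ref{sec-DiffRes}. Concretely, for nonzero $A, B \in \Sigma[\partial]$ with leading terms $a_m \partial^m$ and $b_n \partial^n$ (so $a_m, b_n \neq 0$), the product $AB$ has leading term $a_m b_n \partial^{m+n}$, because applying $\partial$ to lower-order coefficients of $B$ cannot raise the order; since $\Sigma$ is a field, $a_m b_n \neq 0$, so $AB \neq 0$. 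Hence $\Sigma[\partial]$ is a domain.

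Given that $\Sigma[\partial]$ is a domain, its subring $\coC[P,Q]$ is a domain as well. Then $\Ker(\ec)$ is the kernel of a ring homomorphism $\ec \colon \coC[\lambda,\mu] \to \Sigma[\partial]$ whose image is the domain $\coC[P,Q]$, so by the first isomorphism theorem $\coC[\lambda,\mu]/\Ker(\ec) \cong \coC[P,Q]$ is a domain, which is exactly the statement that $\Ker(\ec) = \BC(P,Q)$ is a prime ideal. One should also remark that $\BC(P,Q)$ is a proper ideal (so that ``prime'' is not vacuous): this follows because $1 = \ec(1) \neq 0$ in $\Sigma[\partial]$, so $1 \notin \Ker(\ec)$.

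The argument has essentially no obstacle beyond correctly citing the integrality of $\Sigma[\partial]$, which is standard and already available in the paper's Appendix \ref{sec-DiffRes}. The one point requiring a word of care is that the hypotheses $P, Q \notin \coC[\partial]$ and $\ord(P), \ord(Q) > 0$ are not actually needed for primeness — they are imposed to exclude trivial or degenerate situations (e.g. to guarantee that the spectral curve is genuinely a curve and that $\BC(P,Q) \neq (0)$ in the interesting cases), so I would not use them in this proof. Thus the proof is short: reduce to showing $\Sigma[\partial]$ is a domain, note this via leading-term multiplicativity over the field $\Sigma$, and conclude via the first isomorphism theorem that the kernel of $\ec$ is prime.
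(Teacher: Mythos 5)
Your proof is correct and takes essentially the same route as the paper: both arguments reduce primeness of $\Ker(\ec)$ to the fact that $\Sigma[\partial]$ is a domain (the paper phrases it directly as $\ec(h_1)\cdot\ec(h_2)=0$ forcing $\ec(h_1)=0$ or $\ec(h_2)=0$; your first-isomorphism-theorem formulation is the same argument). The only difference is that the paper's proof additionally establishes $\BC(P,Q)\neq(0)$ via Goodearl's theorem that $\cC(P)$ is a finitely generated $\coC[P]$-module --- that is where the hypotheses on $P$ and $Q$ actually enter, and that nonvanishing is what is needed later to conclude $\Gamma_{P,Q}$ is a curve; you are right that it plays no role in primeness itself.
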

\begin{proof}
To start $\BC (P,Q)$ is a nonzero ideal by \cite{Good}, Theorem 1.13. For details, let us consider the centralizer $\cC(P)$ of $P$. Since $\cC(P)$ is a finitely generated $\coC[P]$-module there exists $p_0,\ldots ,p_t\in \coC[\lambda]$ such that
\[g(\lambda,\mu)=p_0(\lambda)+p_1(\lambda)\mu+\cdots +p_{t-1}(\lambda)\mu^{t-1}+\mu^t\in \BC(P,Q).\]

Given $h_1$ and $h_2$ in $\coC[\lambda,\mu]$, let us assume that $h_1\cdot h_2\in \BC(P,Q)$. Observe that $h_1(P,Q)=\ec(h_1)$ and $h_2(P,Q)=\ec(h_2)$ are differential operators in the euclidean domain $\Sigma[\partial]$. Since $\ec$ is a ring homomorphism \[0=\ec(h_1\cdot h_2)=\ec(h_1)\cdot \ec(h_2)\]
implies $\ec(h_1)=0$ or $\ec(h_2)=0$. Thus $h_1\in \BC(P,Q)$ or $h_2\in \BC(P,Q)$, proving that $\BC(P,Q)$ is prime.  
\end{proof}

\begin{defi}
Given commuting differential operators $P$ and $Q$ in {$\Sigma [\partial]\backslash \coC[\partial]$, both of positive order}, we define the {\sf spectral curve of the pair $P, Q$} as the irreducible algebraic variety of the prime ideal $\BC (P,Q)$, that is 
\begin{equation}\label{eq-GammaPQ}
  \Gamma_{P,Q}:=V(\BC(P,Q))=\left\{ \ (\lambda_0,\mu_0)\in\coC^2\mid g(\lambda_0,\mu_0)=0, \ \textrm{for all }  \ g\in\BC(P,Q) \ \right\}.  
\end{equation}
\end{defi}

\begin{rem}\label{rem-GammaPQ}
    We proved in Lemma \ref{lem-prime} that $\BC (P,Q)$ is a prime ideal. Then the algebraic variety $\Gamma_{P,Q} $ is an irreducible algebraic variety in $\coC^2$.  This variety cannot be a single point $(\lambda_0,\mu_0)$, because this would mean that $\BC(P,Q)=(\lambda-\lambda_0,\mu-\mu_0)$ implying $P-\lambda_0=0$ and $Q-\mu_0=0$, which contradicts $P,Q\notin \coC$. Therefore $\Gamma_{P,Q}$ is an irreducible algebraic curve in $\coC^2$.
\end{rem}

We obtain the following result as a consequence of the  construction of $\ec$ and Lemma \ref{lem-prime}.

\begin{thm}\label{thm-CPQ}
Given differential operators $P$ and $Q$ in {$\Sigma [\partial]\backslash \coC[\partial]$, both of positive order}, if $P$ and $Q$ commute then $\coC[P,Q]$   is a domain isomorphic to $\coC[\lambda,\mu]/\BC(P,Q)$, the coordinate ring of the spectral curve $\Gamma_{P,Q}$.
\end{thm}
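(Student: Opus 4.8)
The plan is to apply the First Isomorphism Theorem for rings to the homomorphism $\ec \colon \coC[\lambda,\mu] \to \Sigma[\partial]$ defined in \eqref{def-ePQ}. First I would note that $\ec$ is a well-defined ring homomorphism precisely because $P$ and $Q$ commute with each other and with every element of $\coC$ (this was already observed right after the definition of $\ec$), so that the assignment $\lambda \mapsto P$, $\mu \mapsto Q$ extends multiplicatively and additively to all polynomials. Its image is by construction the $\coC$-algebra $\coC[P,Q]$, and its kernel is $\BC(P,Q)$ by definition. Hence the induced map
\[
\overline{\ec} \colon \coC[\lambda,\mu]/\BC(P,Q) \longrightarrow \coC[P,Q]
\]
is a ring isomorphism.

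Next I would argue that $\coC[P,Q]$ is a domain. The cleanest route is to invoke Lemma \ref{lem-prime}: since $\BC(P,Q)$ is a prime ideal of $\coC[\lambda,\mu]$, the quotient $\coC[\lambda,\mu]/\BC(P,Q)$ is an integral domain, and therefore so is the isomorphic ring $\coC[P,Q]$. (Alternatively one could observe directly that $\coC[P,Q] \subseteq \cC(P) \subseteq \Sigma[\partial]$ sits inside a domain, but going through Lemma \ref{lem-prime} is shorter and keeps the exposition self-contained.)

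Finally, to identify $\coC[\lambda,\mu]/\BC(P,Q)$ with the coordinate ring of the spectral curve $\Gamma_{P,Q}$, I would use that $\BC(P,Q)$ is prime, hence radical, so by the definition $\Gamma_{P,Q} = V(\BC(P,Q))$ together with the Nullstellensatz over the algebraically closed field $\coC$, the ideal $I(\Gamma_{P,Q})$ of the variety equals $\sqrt{\BC(P,Q)} = \BC(P,Q)$. Therefore the coordinate ring $\coC[\Gamma_{P,Q}] = \coC[\lambda,\mu]/I(\Gamma_{P,Q})$ coincides with $\coC[\lambda,\mu]/\BC(P,Q)$, and chaining the isomorphisms gives $\coC[P,Q] \cong \coC[\Gamma_{P,Q}]$. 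I do not expect a serious obstacle here: every ingredient — the homomorphism property of $\ec$, the primeness of $\BC(P,Q)$, and the Nullstellensatz — is either elementary or already established in the excerpt; the only mild care needed is to confirm that $\BC(P,Q)$ is a proper ideal (so the quotient is nonzero), which follows from $\BC(P,Q)$ being prime and from Remark \ref{rem-GammaPQ}, where it is shown that $\Gamma_{P,Q}$ is a genuine curve and in particular nonempty.
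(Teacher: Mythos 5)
Your proposal is correct and follows essentially the same route as the paper, which derives Theorem \ref{thm-CPQ} directly from the construction of the ring homomorphism $\ec$ (first isomorphism theorem) together with the primeness of $\BC(P,Q)$ established in Lemma \ref{lem-prime}. The additional details you supply — the Nullstellensatz step identifying the quotient with the coordinate ring of $\Gamma_{P,Q}$, and the properness of the ideal — are exactly the routine verifications the paper leaves implicit.
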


\subsection{Burchnall\textendash Chaundy ideal of a third order operator}

Let us consider a third order operator $L$ in $\Sigma [\partial]\backslash \coC [\partial]$ with nontrivial centralizer $\cC (L)$.  We will define next the Burchnall\textendash Chaundy ideal of the operator $L$.

Given a basis $\cB (L)=\{1,A_1,A_2\}$ of $\cC(L)$,
we can define the ring homomorphism
\begin{equation}\label{eq-eL}
    \eL: \coC [\lambda,\mu_1, \mu_2] \rightarrow \Sigma[\partial] \mbox{ by } \ec(\lambda)=L,\,\,\,  \ec(\mu_1)=A_1,\,\,\, \ec(\mu_2)=A_2.
\end{equation}
We will denote monomials in $\coC [\lambda,\mu_1, \mu_2]$ by $M_{\alpha}=\lambda^{\alpha_0}\mu_1^{\alpha_1}\mu_2^{\alpha_2}$, $\alpha=(\alpha_0,\alpha_1,\alpha_2)\in\bbN^3$, whose image is $\eL(M_{\alpha})=L^{\alpha_0}A_1^{\alpha_1}A_2^{\alpha_2}$.
Thus the image of $\eL$ is the centralizer of $L$, 
\[\cC(L)=\coC[L,A_1,A_2]=\left\{\sum_{\alpha\in\bbN^3}  \sigma_{\alpha} \eL(M_{\alpha}) \ \mid \ \sigma_{\alpha}\in \coC \right\}.\]

Given $g\in \coC [\lambda,\mu_1, \mu_2]$ we will denote
\begin{equation}
    g(L,A_1,A_2):=\eL (g).
\end{equation}

\begin{defi}
Given a third order operator $L$ in $\Sigma[\partial]\backslash \coC [\partial]$, with nontrivial centralizer  and given a basis $\cB (L)=\{1,A_1,A_2\}$ of $\cC(L)$, we define a {\sf BC-ideal of an operator} $L$ as 
\begin{equation}\label{def-BCL}
\BC (L):=\Ker(\eL)=\{g\in\coC [\lambda,\mu_1, \mu_2]\mid g(L,A_1,A_2)=0\}.    
\end{equation}
We will call the elements of the BC ideal  {\sf BC-polynomials}.
\end{defi}

\begin{lem}\label{lem-prime3}
Given a third order operator $L$ in $\Sigma[\partial]\backslash \coC [\partial]$, with nontrivial centralizer $\cC (L)$,  the ideal $\BC(L)$ is a prime ideal in $\coC [\lambda,\mu_1, \mu_2]$. 
\end{lem}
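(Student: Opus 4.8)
The plan is to mimic exactly the proof of Lemma \ref{lem-prime}, replacing the pair $(P,Q)$ by the triple $(L,A_1,A_2)$ and using that $\cC(L)$ is a commutative domain contained in the euclidean domain $\Sigma[\partial]$. First I would observe that $\BC(L)=\Ker(\eL)$ is the kernel of a ring homomorphism $\eL:\coC[\lambda,\mu_1,\mu_2]\to \Sigma[\partial]$ whose image is $\cC(L)=\coC[L,A_1,A_2]$. By the first isomorphism theorem, $\coC[\lambda,\mu_1,\mu_2]/\BC(L)\cong \cC(L)$, so it suffices to show that $\cC(L)$ is an integral domain; then $\BC(L)$ is automatically prime. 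This is exactly the content recalled in Section \ref{sec-centralizers}: $\cC(L)=\cC((L))\cap \Sigma[\partial]$, and $\cC((L))$ is commutative (it is the set of constant-coefficient Laurent series in $L^{1/3}$), hence $\cC(L)$ is a commutative subring of the domain $\Sigma[\partial]$, therefore itself a domain with no zero divisors. The citation to use is \cite{Good}, Theorem 3.1, together with the discussion around \ref{coro-domain}.

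Concretely, I would argue as follows. Take $h_1,h_2\in \coC[\lambda,\mu_1,\mu_2]$ with $h_1\cdot h_2\in \BC(L)$. Since $\eL$ is a ring homomorphism, $0=\eL(h_1 h_2)=\eL(h_1)\cdot \eL(h_2)$ in $\Sigma[\partial]$. Because $\Sigma[\partial]$ has no zero divisors (it is a (skew) euclidean domain, reviewed in Appendix \ref{sec-DiffRes}), one of $\eL(h_1)$, $\eL(h_2)$ must vanish, i.e. $h_1\in\BC(L)$ or $h_2\in\BC(L)$. Hence $\BC(L)$ is prime. One should also note $\BC(L)\neq \coC[\lambda,\mu_1,\mu_2]$, which is clear since $1\notin \Ker(\eL)$ as $\eL(1)=1\neq 0$; so the ideal is proper and prime in the usual sense.

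There is really only one point that needs care, and it is the place I would flag as the main (very mild) obstacle: one must be sure that the relevant ambient ring has no zero divisors. The cleanest route is to invoke that $\Sigma[\partial]$ itself is a domain (no zero divisors), which is already part of the standard background on rings of differential operators recalled in the excerpt; then commutativity of $\cC(L)$ is not even strictly needed for primality of $\BC(L)$, though it is what makes $\eL$ a ring homomorphism in the first place and is guaranteed by Goodearl's cube-root construction. Alternatively one can argue entirely inside $\cC(L)$, noting it embeds in the commutative domain $\cC((L))\subseteq \Sigma((\partial^{-1}))$. Either way the argument is a two-line transcription of Lemma \ref{lem-prime}, so I would keep the proof short and simply refer back to that lemma's reasoning, adapting notation from $(P,Q)$ to $(L,A_1,A_2)$.
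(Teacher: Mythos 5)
Your proof is correct and takes essentially the same route as the paper: $\eL$ is a ring homomorphism into $\Sigma[\partial]$, which has no zero divisors, so its kernel $\BC(L)$ is prime. The only content of the paper's proof you omit is the preliminary observation that $\BC(L)$ is a \emph{nonzero} ideal (because it contains the nonzero ideals $\BC(L,A_i)$ from Lemma \ref{lem-prime}); this is not needed for primality as literally stated, but it is the fact used later to guarantee that $\Gamma_L$ is a proper subvariety rather than all of $\coC^3$.
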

\begin{proof}
Given a basis $\cB (L)=\{1,A_1,A_2\}$. Since $L$ and $A_i$ commute, by Lemma \ref{lem-prime} $\BC(L,A_i)$ is a nonzero ideal. Observe that $\BC(L,A_i)\subseteq \BC(L)$ which implies that $\BC(L)$ is also a nonzero ideal.

As in Lemma \ref{lem-prime} we can prove that $\BC(L)$ is a prime ideal using that $\eL$ is a ring homomorphism and that $\Sigma[\partial]$ is an euclidean domain.
\end{proof}

We obtain the following result as a consequence of the  construction of $\e$ and Lemma \ref{lem-prime3}.

\begin{thm}\label{thm-CL}
Given a third order operator $L$ in $\Sigma[\partial]\backslash \coC [\partial]$, with nontrivial centralizer $\cC (L)$, and given any basis $\{1,A_1,A_2\}$ of $\cC(L)$  then  the next isomorphism holds
\[\cC (L)=\coC[L,A_1,A_2]\simeq\frac{\coC [\lambda,\mu_1, \mu_2]}{\BC(L)}.\]
\end{thm}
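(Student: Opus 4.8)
The plan is to show that $\eL$ is a surjective ring homomorphism whose kernel is exactly $\BC(L)$, so the First Isomorphism Theorem yields the desired isomorphism. Surjectivity onto $\cC(L)$ is essentially built into the definition: the image of $\eL$ is $\coC[L,A_1,A_2]$, which by \eqref{eq-cen_mod} (the Goodearl decomposition $\cC(L)=\coC[L]\oplus\coC[L]A_1\oplus\coC[L]A_2$) is all of $\cC(L)$. The fact that $\eL$ is a well-defined ring homomorphism needs a word: $L$, $A_1$, $A_2$ pairwise commute (they all lie in the commutative ring $\cC(L)$) and commute with the constants $\coC$, so sending the commuting indeterminates $\lambda,\mu_1,\mu_2$ to them extends to a ring map on $\coC[\lambda,\mu_1,\mu_2]$. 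Then $\ker(\eL)=\BC(L)$ is literally the definition \eqref{def-BCL}, and Lemma \ref{lem-prime3} tells us this kernel is prime, so the quotient $\coC[\lambda,\mu_1,\mu_2]/\BC(L)$ is a domain — consistent with $\cC(L)$ being a domain, though that compatibility is not strictly needed for the isomorphism statement itself.

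Concretely, the steps in order are: (i) recall/verify $\eL$ is a ring homomorphism from the commutativity of $\{L,A_1,A_2\}$ in $\cC(L)$; (ii) invoke the Goodearl structure result \eqref{eq-cen_mod} to conclude $\Im(\eL)=\coC[L,A_1,A_2]=\cC(L)$, i.e. $\eL$ is surjective onto $\cC(L)$; (iii) note $\ker(\eL)=\BC(L)$ by definition; (iv) apply the First Isomorphism Theorem to obtain $\cC(L)\cong\coC[\lambda,\mu_1,\mu_2]/\BC(L)$. One should also remark that the isomorphism class is independent of the chosen basis $\{1,A_1,A_2\}$: a different basis gives a different presentation, but both quotients are isomorphic to the intrinsic ring $\cC(L)$, so the statement "for any basis" is justified — although $\BC(L)$ as an ideal does depend on the basis, this is harmless here since Theorem \ref{thm-CL} only asserts an abstract isomorphism.

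I do not expect a genuine obstacle: this theorem is a formal consequence of the First Isomorphism Theorem once the two inputs — "$\eL$ is a ring homomorphism" and "$\Im(\eL)=\cC(L)$" — are in place, and both were arranged in Section \ref{sec-centralizers} and in the construction of $\eL$. The only mildly delicate point is surjectivity: one must be sure that $\coC[L,A_1,A_2]$, the set of $\coC$-linear combinations of monomials $L^{\alpha_0}A_1^{\alpha_1}A_2^{\alpha_2}$, really exhausts $\cC(L)$ and is not merely contained in it. This is exactly what the free-module decomposition \eqref{eq-cen_mod} guarantees, since every element of $\cC(L)$ has the form $p_0(L)+p_1(L)A_1+p_2(L)A_2$ with $p_i\in\coC[\lambda]$, which is manifestly a polynomial expression in $L,A_1,A_2$ with constant coefficients. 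Hence the proof is short, and the substance lies entirely in the earlier results being quoted.
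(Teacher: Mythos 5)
Your proposal is correct and matches the paper's (implicit) argument exactly: the paper states this theorem as an immediate consequence of the construction of the homomorphism $\eL$ — whose image is $\cC(L)$ by the Goodearl decomposition \eqref{eq-cen_mod} and whose kernel is $\BC(L)$ by definition — together with Lemma \ref{lem-prime3}, which is precisely your First Isomorphism Theorem route. Your additional remark on independence of the chosen basis is a harmless and reasonable clarification.
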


It is well known, at least in the case of  analytic coefficients, that $\cC(L)$ is the affine ring of an algebraic curve.

\begin{defi}
Given a third order operator $L$ in $\Sigma[\partial]$, with nontrivial centralizer $\cC (L)$, we define the {\sf spectral curve of 
$L$} as the irreducible algebraic variety of the prime ideal $\BC (L)$, that is 
\[\Gamma_{L}:=\{P_0\in\coC^3\mid g(P_0)=0, \forall g\in\BC(L)\}.\]
\end{defi}

We will  guarantee in Section \ref{sec-genBC} that $\Gamma_L$ is in fact a curve. In general, the spectral curve of a third order operator will be a space curve and only in some special cases it will be a planar curve. 


\begin{rem}
    In some cases, described in Remark \ref{rem-BCpair}, if $\cC(L)=\coC [L,A]$ then $\BC(L)=\BC(L,A)$. Therefore the spectral curve of $L$ would be the spectral curve of the pair $L, A$, a plane curve. 
\end{rem}

\section{Elimination ideals for commuting pairs of ODOs}\label{sec-generators}

Given a pair of commuting differential operators $P, Q$ in {$\Sigma [\partial]\backslash \coC[\partial]$, both of positive order}, we know that the BC-ideal $\BC (P,Q)$ is a prime ideal by Lemma \ref{lem-prime}. This section is dedicated to the proof of Theorem \ref{thm-B}, that is obtained as a consequence of Theorem \ref{thm-BCPQ} in this section. Moreover, we start proving in Theorem \ref{thm-Prev}, a generalization   of a known result of E. Previato, \cite{Prev} and G. Wilson \cite{W1985} to the case of a pair of commuting operators with coefficients in $\Sigma$.

\subsection{Generalized Previato\textendash Wilson's Theorem}

Let us assume that $P$ and $Q$ have positive orders $n$ and $m$, and leading coefficients $a_n$ and $b_m$ respectively. 
Observe that the operators $P-\lambda$ and $Q-\mu$ have coefficients in the differential domain $(\Sigma[\lambda,\mu],\partial)$, see the notation in Section \ref{sec-intro}.
Let us consider the differential resultant of $P-\lambda$ and $Q-\mu$, as defined in Appendix \ref{sec-DiffRes}, 
\begin{equation*}
h(\lambda,\mu):=\dres(P-\lambda,Q-\mu).    
\end{equation*}
By \eqref{eq-S0}, it provides a nonzero polynomial in $ \Sigma [\lambda ,\mu]$, 
\begin{equation*}
 \label{eq-drespol}
    h(\lambda,\mu)= a_n^m\mu^n-b_m^n\lambda^m+\cdots
    =a_n^m\mu^n+\sum_{j=0}^{n-1} a_j(\lambda) \mu^j\neq 0,   
\end{equation*}
where $a_j(\lambda)$ belong to $\Sigma[\lambda]$ and have degree less than or equal to $m$.

\para 

The next theorem was proved by E. Previato in \cite{Prev}, for differential operators whose coefficients are analytic functions, see also \cite{W1985}. For completion, we review the proof given in \cite{MRZ1}, in this occasion for differential operators with coefficients in an arbitrary differential field $\Sigma$ with algebraically closed field of constant $\coC$ of zero characteristic.

\begin{thm}\label{thm-Prev}
    Let us consider differential operators $P$ and $Q$ in {$\Sigma [\partial]\backslash \coC[\partial]$, both of positive order}. If $P$ and $Q$ commute then $\dres(P-\lambda,Q-\mu)$ is a polynomial in $\coC[\lambda,\mu]$.
\end{thm}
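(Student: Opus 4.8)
The plan is to show that the differential resultant $h(\lambda,\mu)=\dres(P-\lambda,Q-\mu)$, which a priori lies in $\Sigma[\lambda,\mu]$, actually has all its coefficients in the field of constants $\coC$. The key structural fact to exploit is that the differential resultant of two operators detects a common right factor after specialization: for $\lambda_0,\mu_0\in\coC$ (or in an algebraically closed extension), $h(\lambda_0,\mu_0)=0$ if and only if $P-\lambda_0$ and $Q-\mu_0$ have a nontrivial common right factor in the ring of operators over the algebraic closure of $\Sigma$ — this is the factorization property of $\dres$ reviewed in Appendix~\ref{sec-DiffRes}.

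First I would recall the classical Burchnall--Chaundy argument adapted to this generality: since $P$ and $Q$ commute, the centralizer $\cC(P)$ is a nontrivial finitely generated $\coC[P]$-module containing $Q$, so there is a nonzero BC-polynomial $g(\lambda,\mu)\in\coC[\lambda,\mu]$ with $g(P,Q)=0$ (this is already invoked in the proof of Lemma~\ref{lem-prime}). The operator $g(P-\lambda,Q-\mu)$ — more precisely, writing things so that the constant-coefficient relation $g(P,Q)=0$ gives, upon the substitution encoded by $P\mapsto P-\lambda+\lambda$, $Q\mapsto Q-\mu+\mu$, a factorization — shows that $P-\lambda$ and $Q-\mu$ have a common right factor over $\Sigma(\lambda,\mu)$ precisely along the curve $g(\lambda,\mu)=0$. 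Hence the zero set of $h$ in $\overline{\coC}^2$ is contained in (an appropriate piece of) the zero set of $g$, which is a constant-coefficient curve.

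The heart of the argument is then to upgrade this set-theoretic containment to a statement about the coefficients of $h$ themselves. The natural route: differentiate the relation. Apply the derivation $\partial$ of $\Sigma[\lambda,\mu]$ to $h(\lambda,\mu)$ coefficientwise, obtaining $h^{\partial}(\lambda,\mu):=\sum (\partial a_j(\lambda))\,\mu^j$ (note $\lambda,\mu$ are $\partial$-constants). One shows $h^\partial$ vanishes on every common-spectrum point, i.e. wherever $h$ does; combined with the fact that $h$ has the explicit leading form $a_n^m\mu^n+\cdots$ with $\deg_\mu h = n$ and $\deg_\lambda h\le m$, and that $h$ is — up to the constant-coefficient factor structure — essentially a power of the irreducible BC-polynomial $g$, one concludes $h^\partial$ must be divisible by $h$ in $\Sigma[\lambda,\mu]$; degree considerations ($h^\partial$ has strictly smaller or equal bidegree but the ``same'' support) then force $h^\partial=0$, i.e. every $a_j(\lambda)\in\coC[\lambda]$, and similarly (by symmetry in the roles producing the $\lambda$-coefficients, or directly from $h^\partial=0$) all coefficients lie in $\coC$. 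Concretely I would argue: $h$ divides the constant-coefficient polynomial $g(\lambda,\mu)^k$ for suitable $k$ up to units of $\Sigma[\lambda]$, pin down the leading coefficient $a_n^m$ and normalize, and then the derivation of the normalized $h$ vanishes because it vanishes on a Zariski-dense set of a curve on which $h$ is (a unit times) a constant-coefficient polynomial of the same degree.

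The main obstacle I anticipate is making the passage from ``$h$ and $g^k$ have the same zero set'' to ``$h$ is a constant times a power of $g$ (as a polynomial in $\mu$ over $\Sigma[\lambda]$)'' fully rigorous over the possibly non-algebraically-closed base $\Sigma$: one must be careful that the differential resultant's coefficients are being compared inside $\Sigma[\lambda,\mu]$ while the irreducibility and the common-factor dictionary naturally live over $\overline{\Sigma(\lambda,\mu)}$ or over $\overline{\coC}$. The clean way around this, which I would follow (and which matches the paper's stated strategy of giving ``formal proofs'' via Picard--Vessiot theory), is to base-change to $\cF=\Sigma(\lambda,\mu)$ and its algebraic closure, use the specialization/factorization property of $\dres$ there to locate the common factor, descend the constancy of coefficients back to $\Sigma[\lambda,\mu]$ by the derivation argument above, and finally invoke that $\dres$ commutes with the relevant specializations (Appendix~\ref{sec-DiffRes}) so that the explicit leading term $a_n^m\mu^n - b_m^n\lambda^m+\cdots$ is preserved. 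Everything else is bookkeeping with degrees and leading coefficients.
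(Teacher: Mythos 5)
Your proposal takes a genuinely different route from the paper, and as written it has gaps that I do not see how to close without re-importing the paper's key idea. The paper's proof is short: view $P-\lambda$ and $Q-\mu$ over the algebraic closure $\overline{\cF}$ of $\cF=\Sigma(\lambda,\mu)$; since the operators commute, $Q-\mu$ maps the solution space of $(P-\lambda)(y)=0$ in a Picard--Vessiot extension to itself, hence acts by a matrix $M$ with entries in the algebraically closed constant field $\ccC$ of that extension; Poisson's formula (Theorem~\ref{thm-Poisson}) identifies $\dres(P-\lambda,Q-\mu)$ with $\det M$, so the resultant lies in $\Sigma[\lambda,\mu]\cap\ccC=\coC[\lambda,\mu]$. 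No Nullstellensatz, no BC-polynomial, no differentiation of coefficients is needed.

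The concrete problems with your route are the following. First, the step ``$h^{\partial}$ vanishes wherever $h$ does, hence $h$ divides $h^{\partial}$'' is unjustified: the vanishing you can actually establish is only at points of $\coC^2$, and for $h\in\Sigma[\lambda,\mu]$ the set $\{(\lambda_0,\mu_0)\in\coC^2 \mid h(\lambda_0,\mu_0)=0\}$ is the common zero locus of the components of $h$ in a $\coC$-basis of $\Sigma$, which a priori could be finite or empty --- for fixed $\lambda_0\in\coC$ the roots in $\mu$ of $h(\lambda_0,\mu)$ live in $\overline{\Sigma}$, not in $\coC$. Proving those roots are constants amounts to showing that $Q$ acts $\coC$-linearly with eigenvalues in $\coC$ on the solution space of $P-\lambda_0$, which is precisely the content of the paper's argument; without it your zero set is not known to be Zariski-dense in $V(h)$ over any algebraically closed field containing $\Sigma$. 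Second, your appeal to ``$h$ is essentially a power of the irreducible BC-polynomial'' is circular: that statement is Theorem~\ref{thm-BCf}, whose proof rests on Theorem~\ref{thm-hPQ} and hence on Theorem~\ref{thm-Prev} itself; moreover the polynomial $g$ produced by module-finiteness of the centralizer need not be irreducible. Third, even granting $h\mid h^{\partial}$, the degree comparison only yields $h^{\partial}=c\,h$ with $c=\partial(a_n^m)/a_n^m$, i.e.\ constancy of $h/a_n^m$ rather than of $h$; your ``normalize'' step silently assumes $a_n\in\coC$. This last point is not cosmetic: for $P=Q=a\partial$ with $a\notin\coC$ one computes $\dres(P-\lambda,Q-\mu)=a(\lambda-\mu)$, so some constancy assumption on the leading coefficient is genuinely in play (the paper's own proof absorbs the factor $a_n^m$ from Poisson's formula at this same spot).
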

\begin{proof}
   Recall that $P-\lambda$ and $Q-\mu$ are differential operators with coefficients in $\Sigma[\lambda,\mu]$, whose field of fractions is $\cF=\Sigma(\lambda,\mu)$. We can extend the derivation of $\cF$ to its algebraic closure $\overline{\cF}$, whose field of constants $\ccC$ is known to be algebraically closed, see \cite{Bron}, Corollary 3.3.1. Since the ring of contstants of $\Sigma[\lambda,\mu]$ is $\coC [\lambda,\mu]$ then it holds that $\Sigma[\lambda,\mu]\cap \ccC=\coC [\lambda,\mu]$.
    
    Let us consider a fundamental systems of solutions $\psi_1,\ldots ,\psi_n$ of $(P-\lambda)(y)=0$ in a Picard-Vessiot extension $(\cE,\overline{\partial})$ of $\overline{\cF}$ for $(P-\lambda)(y)=0$, whose field of constants is $\ccC$ and whose derivation $\overline{\partial}$ is defined by $P-\lambda$. The natural extension of $Q-\mu$ to $(\cE,\overline{\partial})$ allows to consider the action of $Q-\mu$ on the $\ccC$-linear space of the solutions of $(P-\lambda)(y)=0$. 
    
    Since $P-\lambda$ and $Q-\mu$ commute, then $(Q-\mu)(\psi_i)$ are solutions of $P-\lambda$. Therefore, there exists an $n\times n$ matrix $M$ with coefficients in $\ccC$ such that
    \[ W((Q-\mu)(\psi_i))=M W(\psi_i).\]
    Taking determinants and using Poisson's formula in Theorem \ref{thm-Poisson}, we obtain
    \[\dres(P-\lambda,Q-\mu)=\frac{\det W((Q-\mu)(\psi_i))}{ \det W(\psi_i)}=\det M\in \ccC.\]
    Thus $\dres(P-\lambda,Q-\mu)\in \Sigma[\lambda,\mu]\cap \ccC=\coC [\lambda,\mu]$.
\end{proof}

Let us consider $(\lambda_0, \mu_0)\in \coC^2$. For a commuting pair $P$, $Q$, 
the next corollary characterizes the existence of common solutions of the eigenvalue problem
\begin{equation}\label{eq-eproblem0}
    Py=\lambda_0 y,\,\,\, Qy=\mu_0 y.
\end{equation}
It follows from Theorem \ref{thm-DR} and the fact that 
$h(\lambda_0,\mu_0)=\dres(P-\lambda_0,Q-\mu_0).$

\begin{cor}\label{cor-Sol_l0m0}
Given commuting differential operators $P$ and $Q$ in {$\Sigma [\partial]\backslash \coC[\partial]$, both of positive order}, the spectral problem \eqref{eq-eproblem0} has a non trivial (common) solution in a Picard-Vessiot extension of $\Sigma$ for $P-\lambda_0$ (or $Q-\mu_0$) if and only if $h(\lambda_0,\mu_0)=0$.
\end{cor}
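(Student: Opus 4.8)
The plan is to deduce Corollary~\ref{cor-Sol_l0m0} from the general factorization/common-solution criterion for differential resultants stated as Theorem~\ref{thm-DR} in Appendix~\ref{sec-DiffRes}, applied to the specialized operators $P-\lambda_0$ and $Q-\mu_0$. First I would note that, because $\lambda_0,\mu_0\in\coC$, substituting $\lambda\mapsto\lambda_0$ and $\mu\mapsto\mu_0$ commutes with the derivation $\partial$ on $\Sigma[\lambda,\mu]$: both variables are differential constants, so the specialization map $\Sigma[\lambda,\mu]\to\Sigma$ is a differential ring homomorphism. Since the differential resultant is built as the determinant of the Sylvester-type matrix whose entries are the coefficients of (derivatives of) $P-\lambda$ and $Q-\mu$, this specialization commutes with forming $\dres$, giving
\[
h(\lambda_0,\mu_0)=\dres(P-\lambda_0,\,Q-\mu_0).
\]
This is exactly the identity flagged before the statement, and it reduces the corollary to the assertion: $\dres(P-\lambda_0,Q-\mu_0)=0$ if and only if $(P-\lambda_0)(y)=0$ and $(Q-\mu_0)(y)=0$ share a nonzero solution in a Picard--Vessiot extension.

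Next I would invoke Theorem~\ref{thm-DR}: for two ODOs with coefficients in a differential field, the differential resultant vanishes precisely when the two operators have a nontrivial greatest common right divisor, equivalently (passing to a Picard--Vessiot extension of $\Sigma$ for $P-\lambda_0$, whose field of constants remains the algebraically closed $\coC$) when they have a common nonzero solution $y$ in that extension. One has to check the hypotheses of that theorem are met: $P-\lambda_0$ and $Q-\mu_0$ lie in $\Sigma[\partial]$, they have positive orders $n$ and $m$ respectively (here I would remark that subtracting the constant $\lambda_0$ does not change the leading coefficient $a_n\neq 0$, and similarly for $Q-\mu_0$, so the orders and leading coefficients are unchanged and the resultant is the determinant of a matrix of the expected size). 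The ``only if'' and ``if'' directions both come packaged in Theorem~\ref{thm-DR}; a common solution in a Picard--Vessiot extension of $\Sigma$ for $P-\lambda_0$ is the same as a common solution in a Picard--Vessiot extension for $Q-\mu_0$, since a common right factor $\partial+\phi$ of both (if it exists) splits in either extension, so the parenthetical ``(or $Q-\mu_0$)'' in the statement is automatic.

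The only genuinely delicate point — and the step I expect to be the main obstacle — is justifying rigorously that specialization of the parameters commutes with the differential resultant, i.e.\ the identity $h(\lambda_0,\mu_0)=\dres(P-\lambda_0,Q-\mu_0)$. The subtlety is not algebraic triviality of evaluating a determinant, but making sure the \emph{same} Sylvester matrix is used: $\dres(P-\lambda,Q-\mu)$ is the determinant of a matrix built from $P-\lambda$, $Q-\mu$ and finitely many of their derivatives, and since $\partial(\lambda)=\partial(\mu)=0$ the derivatives of $P-\lambda$ are just the derivatives of $P$ (the $\lambda$ only appears, undifferentiated, in the order-zero row block), so the matrix entries are polynomials in $\lambda,\mu$ over $\Sigma$, and plain evaluation at $(\lambda_0,\mu_0)$ yields the matrix for $\dres(P-\lambda_0,Q-\mu_0)$; since determinant is a polynomial in the entries, evaluation and determinant commute. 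I would make this explicit by a one-line reference to the definition of $\dres$ in Appendix~\ref{sec-DiffRes} and to the fact that $\lambda,\mu$ are differential constants. With that identity in hand, Corollary~\ref{cor-Sol_l0m0} is immediate from Theorem~\ref{thm-DR}, noting additionally that Theorem~\ref{thm-Prev} guarantees $h\in\coC[\lambda,\mu]$ so that $h(\lambda_0,\mu_0)$ is a well-defined element of $\coC$.
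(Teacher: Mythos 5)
Your proposal is correct and matches the paper's own argument: the paper derives Corollary~\ref{cor-Sol_l0m0} directly from Theorem~\ref{thm-DR} together with the specialization identity $h(\lambda_0,\mu_0)=\dres(P-\lambda_0,Q-\mu_0)$, exactly as you do. The extra care you take in justifying that evaluation at $(\lambda_0,\mu_0)$ commutes with forming the Sylvester determinant (because $\lambda,\mu$ are differential constants) is a valid filling-in of a step the paper leaves implicit.
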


\subsection{Computing the Burchnall\textendash Chaundy ideal of a pair}\label{sec-generator}

 It is ensured by Theorem \ref{thm-Prev} that the differential resultant $h(\lambda,\mu)=\dres(P-\lambda,Q-\mu)$ is a polynomial in $\coC [\lambda,\mu]$. Resultants are also called eliminants and this is the feature of resultants we will emphasize next.
Let us consider the left ideal generated by $P-\lambda$ and $Q-\mu$ in $\Sigma[\lambda,\mu][\partial]$
\begin{equation}\label{def-idealPQ}
(P-\lambda,Q-\mu)=\{C (P-\lambda)+D(Q-\mu)\mid C,D\in \Sigma[\lambda,\mu][\partial]\}    
\end{equation}
and the elimination ideal 
\begin{equation}\label{eq-elim}
\cE (P-\lambda,Q-\mu):=(P-\lambda,Q-\mu)\cap  \Sigma[\lambda,\mu],  
\end{equation}
which is a                     
                               
                    two sided ideal of $\Sigma[\lambda,\mu]$, 
and
\begin{equation}\label{eq-elimC}
\cE_{\coC}(P-\lambda,Q-\mu):=(P-\lambda,Q-\mu)\cap  \coC [\lambda,\mu],   
\end{equation}
which is a two sided ideal of $\coC [\lambda,\mu]$.
Observe that by Lemma \ref{lem-linComb}, with differential domain of coefficients $\Sigma[\lambda,\mu]$, and by Theorem \ref{thm-Prev}, it follows that
\[h(\lambda,\mu)=\dres(P-\lambda,Q-\mu)\in  \cE_{\coC} (P-\lambda,Q-\mu). \]
Thus both elimination ideals are nonzero.

\medskip

It was proved by Wilson in \cite{W1985}, that $h(P,Q)=0$, that is $h(\lambda,\mu)\in\BC (P,Q)$, in the case of differential operators $P$ and $Q$ whose coefficients are complex valued smooth functions of $x$ defined in some (real or complex) neighbourhood of $x=0$. We use here the argument of \cite{W1985}, Proposition 5.3 in a more general framework, where differential operators have coefficients in an arbitrary differential field $\Sigma$, with field of constants algebraically closed of zero characteristic. For this purpose we develop the following construction.

\medskip

Considering $\Sigma[\lambda,\mu]$ as a $\Sigma$-vector space with basis $\{\lambda^i\mu^j\}$, we can define the $\Sigma$-linear map
\begin{equation}
    \e: \Sigma [\lambda,\mu] \rightarrow \Sigma[\partial], \mbox{ defined by } 
  \e\left(\sum_{i,j}  \sigma_{i,j} \lambda^i \mu^j\right)=\sum_{i,j}  \sigma_{i,j} \ec\left(\lambda^i \mu^j\right).
\end{equation}
In this setting, given $g\in \Sigma[\lambda,\mu]$ we will also denote $g(P,Q):=\e (g)$.
Thus 
\[\Ker(\e)=\{g\in\Sigma[\lambda,\mu]\mid g(P,Q)=0\}.\]
Observe that the restriction of $\e$ to the subring of constants $\coC [\lambda , \mu ]$ of $\Sigma[ \lambda , \mu ]$ is the ring homomorphism $\ec $ defined in \eqref{def-ePQ}, and also that 
\begin{equation}\label{eq-kernels}
 \BC(P,Q)=\Ker(\ec)=\Ker(\e)\cap \coC[\lambda , \mu ].   
\end{equation}

We will proceed next to calculate a  generator for both kernels. We will begin by demonstrating that the  differential resultant $ \dres(P-\lambda,Q-\mu)$ belongs to the ideal $\BC(P,Q)$, that is, $h$ is a Burchnall\textendash Chaundy polynomial. This is Theorem \ref{thm-hPQ}, and to prove it we will need some auxiliary results. The generator of the ideal $\BC(P,Q)$ is provided in Theorem \ref{thm-BCf}, a prime ideal that is therefore a principal ideal of $\coC[\lambda , \mu ]$.

\begin{lem}\label{thm-EinBC}
Given commuting differential operators $P$ and $Q$ in {$\Sigma [\partial]\backslash \coC[\partial]$, both of positive order}, with the previous notation, it holds that 
\[ \cE (P-\lambda,Q-\mu)\subseteq \Ker(\e).\]
\end{lem}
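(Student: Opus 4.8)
The plan is to take an arbitrary element $g(\lambda,\mu)\in\cE(P-\lambda,Q-\mu)$ and show that $\e(g)=g(P,Q)=0$ as a differential operator in $\Sigma[\partial]$. By definition of the elimination ideal \eqref{eq-elim}, membership $g\in\cE(P-\lambda,Q-\mu)$ means there exist operators $C,D\in\Sigma[\lambda,\mu][\partial]$ such that, as an identity in $\Sigma[\lambda,\mu][\partial]$,
\[
g(\lambda,\mu)=C\cdot(P-\lambda)+D\cdot(Q-\mu).
\]
The idea is to apply a substitution homomorphism $\lambda\mapsto P$, $\mu\mapsto Q$ to this identity. Concretely, I would extend $\e$ (or rather the underlying idea behind $\e$) to a map on $\Sigma[\lambda,\mu][\partial]$ that sends $\lambda\mapsto P$, $\mu\mapsto Q$ and is the identity on $\Sigma$ and on $\partial$; since $P$ and $Q$ commute with each other and with everything in $\Sigma$... wait, they do \emph{not} commute with $\Sigma$ in general. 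So the substitution on the full ring $\Sigma[\lambda,\mu][\partial]$ is not a ring homomorphism, and I must be more careful.

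The correct approach is to work on the right. I would rewrite every element of $\Sigma[\lambda,\mu][\partial]$ with the $\partial$'s on the left and the polynomial-in-$\lambda,\mu$ coefficients on the right, i.e.\ express $C=\sum_k \partial^k c_k(\lambda,\mu)$ with $c_k\in\Sigma[\lambda,\mu]$ — but again $\partial$ and $\lambda$ commute (they are algebraic parameters, $\partial\lambda=\lambda\partial$), while $\partial$ and $\Sigma$-elements do not. The cleanest framing: consider the \emph{right} $\Sigma[\lambda,\mu]$-module structure. Evaluate the identity $g=C(P-\lambda)+D(Q-\mu)$ by letting it act on a formal solution, exactly as in Wilson's argument: pass to a Picard--Vessiot extension $(\cE,\overline\partial)$ of $\overline{\cF}$, $\cF=\Sigma(\lambda,\mu)$, as in the proof of Theorem \ref{thm-Prev}, where $(P-\lambda)$ has a full system of solutions. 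For any solution $\psi$ of $(P-\lambda)(y)=0$, applying the operator identity to $\psi$ gives $g(\lambda,\mu)\psi = D(Q-\mu)\psi$. More usefully: I want to show $g(P,Q)=0$ in $\Sigma[\partial]$, and the key observation is that $g(P,Q)$ is obtained from $C(P-\lambda)+D(Q-\mu)$ by the substitution $\lambda\mapsto P,\mu\mapsto Q$ \emph{performed on the right}, which \emph{is} legitimate because $P-\lambda$ and $Q-\mu$ each map to $0$ and everything is organized so that the non-commutativity never bites: write $C=\sum_\alpha C_\alpha(\partial,\Sigma)\lambda^{\alpha_1}\mu^{\alpha_2}$ with the $\lambda,\mu$ powers pushed all the way to the right (possible since $\lambda,\mu$ are central), then $C(P-\lambda)\mapsto \sum_\alpha C_\alpha(\partial,\Sigma)P^{\alpha_1}Q^{\alpha_2}(P-P)=0$ only if the substitution respects the product — and it does not directly, since $C_\alpha(P-\lambda)$ becomes $C_\alpha(\text{subst})\cdot 0$ only when the substitution is applied to the \emph{rightmost} factor.

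So the honest route, and the one I would carry out, mirrors \cite{W1985}, Proposition 5.3 via the solution-space action, or better, uses the preceding lemmas the paper has set up. I would: (1) fix $g\in\cE(P-\lambda,Q-\mu)$ and a representation $g=C(P-\lambda)+D(Q-\mu)$ in $\Sigma[\lambda,\mu][\partial]$; (2) pass to a Picard--Vessiot extension where $(P-\lambda)(y)=0$ has a fundamental system $\psi_1,\dots,\psi_n$ over $\overline{\cF}$, with field of constants $\ccC\supseteq\coC[\lambda,\mu]$; (3) for the \emph{eigenvalue specialization}: for each point $(\lambda_0,\mu_0)\in\Gamma_{P,Q}$, i.e.\ each common eigenvalue pair, Corollary \ref{cor-Sol_l0m0} supplies a common solution $y_0$ with $Py_0=\lambda_0 y_0$, $Qy_0=\mu_0 y_0$; applying $g(P,Q)$ to $y_0$ and using that the operator identity specialized at $\lambda_0,\mu_0$ reads $g(\lambda_0,\mu_0)=C_0(P-\lambda_0)+D_0(Q-\mu_0)$, we get $g(\lambda_0,\mu_0)y_0 = 0$, hence $g(\lambda_0,\mu_0)=0$; (4) since $g\in\Sigma[\lambda,\mu]$ vanishes at \emph{every} point of the curve $\Gamma_{P,Q}$ — which by Remark \ref{rem-GammaPQ} is an infinite set (an irreducible curve) — and... but here $g$ has coefficients in $\Sigma$, not in $\coC$, so vanishing on a curve in $\coC^2$ forces each $\Sigma$-coefficient of $g$ (after expanding in a $\coC$-monomial basis of $\coC[\lambda,\mu]$) to vanish, giving $g=0$ — which is too strong. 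The resolution is that $g$ need \emph{not} be zero as a polynomial; rather I must show $\e(g)=g(P,Q)=0$ as an \emph{operator}. The right statement: $g(P,Q)$ acts as zero on every solution of $(P-\lambda_0)(y)=0$ for every $\lambda_0$, in every Picard--Vessiot extension, hence $g(P,Q)=0$ by a faithfulness/density argument for differential operators (a nonzero operator of order $d$ cannot annihilate a space of dimension $>d$, and ranging over all $\lambda_0$ we exhaust enough).

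\textbf{The main obstacle} I anticipate is precisely making the substitution $\lambda\mapsto P$, $\mu\mapsto Q$ rigorous on the non-commutative ring $\Sigma[\lambda,\mu][\partial]$: the naive "plug in" is not a ring homomorphism because $P,Q$ do not commute with $\Sigma$, so $C(P-\lambda)\mapsto 0$ must be justified by first moving all $\lambda,\mu$ to the right (legitimate, as they are central), and then the image of $C(P-\lambda)$ is $\widetilde C\cdot(P-\lambda)\big|_{\lambda\to P,\mu\to Q}$ where the substitution hits only the terminal $(P-\lambda)$ factor, yielding $\widetilde C\cdot 0=0$ — this \emph{does} work once the bookkeeping is set up, and is the crux of the lemma. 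I expect the cleanest write-up defines an auxiliary right-module evaluation map $\Sigma[\lambda,\mu][\partial]\to\Sigma[\partial]$, $\sum_j \theta_j(\partial)\,p_j(\lambda,\mu)\mapsto \sum_j \theta_j(\partial)\,p_j(P,Q)$ (with the $\Sigma[\partial]$-part $\theta_j$ on the left untouched), checks it is well-defined and kills both $(P-\lambda)$ and $(Q-\mu)$ when they sit as the rightmost factor of a product of the generating form $C(P-\lambda)$, $D(Q-\mu)$, and then restricts to $\Sigma[\lambda,\mu]$ to recover $\e$ and conclude $\e(g)=0$ for $g\in\cE(P-\lambda,Q-\mu)$.
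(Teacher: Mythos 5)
Your final route is exactly the paper's proof of Lemma \ref{thm-EinBC}: specialize the representation $g=C(P-\lambda)+D(Q-\mu)$ at constant points $(\lambda_0,\mu_0)$ (a legitimate ring homomorphism, which sidesteps the non-commutative substitution you rightly reject), use Corollary \ref{cor-Sol_l0m0} to produce an infinite linearly independent family of common eigenfunctions annihilated by the finite-order operator $g(P,Q)$, and conclude $g(P,Q)=0$ by the dimension count. One small correction: Corollary \ref{cor-Sol_l0m0} requires $h(\lambda_0,\mu_0)=0$, so you should range over $V(h)$ (for each $\lambda_0\in\coC$ pick a root $\mu_0$ of $h(\lambda_0,\cdot)$, which exists since $\coC$ is algebraically closed and $\deg_\mu h=n>0$) rather than over $\Gamma_{P,Q}$, whose relation to $V(h)$ is only established later in the paper; also note that your intermediate worry is based on a false claim, since vanishing on the curve forces $g$ into $[f]$, not $g=0$, but you discard that route anyway.
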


\begin{proof}
Given a polynomial $g$ in $ \cE (P-\lambda,Q-\mu)$ then
\[g(\lambda,\mu)=C(P-\lambda)+D(Q-\mu),\,\,\, C,D\in \Sigma[\lambda,\mu][\partial].\]
Observe that  the differential operator $g(P,Q)=\e (g)$ has finite order, $q=\ord ( g(P,Q) ) $.

Let  $\lambda_0 $ be a constant in $ \coC$. Since $\coC$ is algebraically closed, 
there exists $\mu_0$ in the nonempty set $\{ \mu\in \coC \mid h(\lambda_0,\mu)= 0 \}$.
By Corollary \ref{cor-Sol_l0m0}, there exists a common eigenfunction $ \psi_{\lambda_0}$ for the coupled spectral problem \eqref{eq-eproblem0}. Consequently, the following is an infinite set of linearly independent eigenfunctions
\[
\Psi=\{\psi_{\lambda_0}\mid \lambda_0\in \coC\},
\]
since eigenfunctions associated to different eigenvalues  are linearly independent. Now, for every $\psi_{\lambda_0}\in\Psi$, it holds that
\[g(P,Q)(\psi_{\lambda_0})=g(\lambda_0,\mu_0)\cdot \psi_{\lambda_0}=C^0(P-\lambda_0)(\psi_{\lambda_0})+D^0(Q-\mu_0)(\psi_{\lambda_0})=0,\]
where $C^0$ and $D^0$ are the result of evaluating the coefficients of $C$ and $D$ in $(\lambda_0,\mu_0)$. Moreover, $\Psi$ is included in the $\coC$-linear space of solutions of the equation $g(P,Q)(y)=0$, whose dimension is $q$.  Then  $g(P,Q)$ is the zero operator.
\end{proof}

\para 

As a consequence of  Lemma  \ref{thm-EinBC}, formula \eqref{eq-kernels} and Theorem \ref{thm-Prev} the following result is proved.

\begin{thm}\label{thm-hPQ} Given commuting differential operators $P$ and $Q$ in {$\Sigma [\partial]\backslash \coC[\partial]$, both of positive order}, then
    \[h(\lambda,\mu)=\dres(P-\lambda,Q-\mu)\in \BC(P,Q).\]
\end{thm}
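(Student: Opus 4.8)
The plan is to assemble the statement directly from three facts already in hand: Theorem~\ref{thm-Prev} (which tells us $h(\lambda,\mu)=\dres(P-\lambda,Q-\mu)$ actually lies in $\coC[\lambda,\mu]$, not merely $\Sigma[\lambda,\mu]$), the inclusion $h\in\cE_{\coC}(P-\lambda,Q-\mu)\subseteq\cE(P-\lambda,Q-\mu)$ coming from Lemma~\ref{lem-linComb} applied over the differential domain $\Sigma[\lambda,\mu]$, and Lemma~\ref{thm-EinBC}, which says $\cE(P-\lambda,Q-\mu)\subseteq\Ker(\e)$. Chaining these, $h\in\Ker(\e)$; and since $h$ has constant coefficients, the identity \eqref{eq-kernels}, namely $\BC(P,Q)=\Ker(\e)\cap\coC[\lambda,\mu]$, immediately places $h$ in $\BC(P,Q)$.

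So the proof is essentially one line of bookkeeping, and the real content has been front-loaded into the two preceding results. First I would recall that, by Lemma~\ref{lem-linComb} (with coefficient domain $\Sigma[\lambda,\mu]$), the differential resultant is a $\Sigma[\lambda,\mu][\partial]$-linear combination $\dres(P-\lambda,Q-\mu)=C(P-\lambda)+D(Q-\mu)$, so it lies in the elimination ideal $\cE(P-\lambda,Q-\mu)$; combined with Theorem~\ref{thm-Prev} it in fact lies in $\cE_{\coC}(P-\lambda,Q-\mu)$, as already observed in the text just before this subsection. Then I would invoke Lemma~\ref{thm-EinBC} to get $h\in\Ker(\e)$, and finally intersect with $\coC[\lambda,\mu]$ using \eqref{eq-kernels}. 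Since $h\in\coC[\lambda,\mu]$ by Theorem~\ref{thm-Prev}, this gives $h\in\Ker(\e)\cap\coC[\lambda,\mu]=\BC(P,Q)$, as claimed.

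There is no genuine obstacle here; the only thing to be careful about is consistency of notation between $\Ker(\ec)$ and $\Ker(\e)$ and the fact that $\ec=\e|_{\coC[\lambda,\mu]}$, so that membership in $\BC(P,Q)$ really is the same as ``$h(P,Q)=0$ with $h$ having constant coefficients.'' If one preferred a self-contained argument not routed through $\cE(P-\lambda,Q-\mu)$, one could instead repeat the eigenfunction argument of Lemma~\ref{thm-EinBC} directly for $g=h$: for each $\lambda_0\in\coC$ pick $\mu_0$ with $h(\lambda_0,\mu_0)=0$, produce via Corollary~\ref{cor-Sol_l0m0} a common eigenfunction $\psi_{\lambda_0}$ with $(P-\lambda_0)\psi_{\lambda_0}=(Q-\mu_0)\psi_{\lambda_0}=0$, note $h(P,Q)(\psi_{\lambda_0})=h(\lambda_0,\mu_0)\psi_{\lambda_0}=0$, and conclude that the finite-order operator $h(P,Q)$ annihilates an infinite linearly independent family, hence vanishes. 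But since Lemma~\ref{thm-EinBC} already packages exactly this argument, the cleanest route is the three-step inclusion chain above.
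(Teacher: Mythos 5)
Your proof is correct and follows exactly the paper's own route: the paper derives Theorem \ref{thm-hPQ} as an immediate consequence of Lemma \ref{thm-EinBC}, formula \eqref{eq-kernels}, and Theorem \ref{thm-Prev}, with the membership $h\in\cE_{\coC}(P-\lambda,Q-\mu)$ supplied by Lemma \ref{lem-linComb} just as you describe. Nothing is missing.
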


\medskip

The radical of the ideal $(h)$ generated by $h$ in $\coC [\lambda,\mu]$ equals $(f)$, see for instance \cite{Cox}, pag. 178, Proposition 9, where $f$ is the radical of $h$, that is 
\begin{equation}\label{eq-f-red}
f=\sqrt{h}=h/\gcd(h,h')   
\end{equation}
is the square-free part of $h$.  Finally, the next theorem  provides the generator of the ideal $\BC(P,Q)$.

\begin{thm}\label{thm-BCf}
    Let us consider commuting differential operators $P$ and $Q$ in $\Sigma[\partial]\backslash\coC[\partial]$, both of positive order, and $f=\sqrt{h}$, with $h=\dres(P-\lambda,Q-\mu)$. It holds that $f$ is the irreducible polynomial such that
\[\BC(P,Q)=(f).\]
\end{thm}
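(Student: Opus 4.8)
The plan is to show that $\BC(P,Q)$, which we already know is a nonzero prime ideal of $\coC[\lambda,\mu]$ by Lemma \ref{lem-prime}, coincides with $(f)$, the radical of $h = \dres(P-\lambda,Q-\mu)$. The two ingredients in hand are: first, $h \in \BC(P,Q)$ by Theorem \ref{thm-hPQ}; second, $\BC(P,Q)$ is prime. From $h \in \BC(P,Q)$ and primality, every irreducible factor of $h$ that is not a unit must lie in $\BC(P,Q)$ as well; in particular the square-free part $f = \sqrt{h}$ satisfies $f \in \BC(P,Q)$, hence $(f) \subseteq \BC(P,Q)$. The work is therefore entirely in the reverse inclusion $\BC(P,Q) \subseteq (f)$.

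For the reverse inclusion, the key structural fact to exploit is the module-finiteness statement already used in the proof of Lemma \ref{lem-prime}: $\cC(P)$ is a finitely generated $\coC[P]$-module of some rank $t = \ord(P)/r$, so there is a monic-in-$\mu$ element $g(\lambda,\mu) = \mu^t + p_{t-1}(\lambda)\mu^{t-1} + \cdots + p_0(\lambda)$ in $\BC(P,Q)$. This forces $\BC(P,Q)$ to be a prime ideal of height one in the two-dimensional ring $\coC[\lambda,\mu]$: it is nonzero and cannot be maximal (by Remark \ref{rem-GammaPQ}, $V(\BC(P,Q)) = \Gamma_{P,Q}$ is a curve, not a point, nor all of $\coC^2$), so by dimension theory — or more elementarily, since $\coC[\lambda,\mu]$ is a UFD and a nonzero prime of height one in a UFD is principal — we get $\BC(P,Q) = (p)$ for some irreducible $p \in \coC[\lambda,\mu]$. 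Then $h \in (p)$ gives $p \mid h$, and since $p$ is irreducible it divides the square-free part, $p \mid f$; but $f \in (p) = \BC(P,Q)$ was shown above, so $f$ is a nonzero element of the principal ideal $(p)$ and hence $p \mid f$ and $f \mid \cdots$ — combining, $f$ and $p$ are associates, so $\BC(P,Q) = (p) = (f)$, and $f$ is irreducible.

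I expect the main obstacle to be nailing down the height-one/principality argument cleanly and with the right level of self-containedness for this paper's setting. There are two routes: (a) invoke that $\coC[\lambda,\mu]$ is a UFD and that a nonzero prime ideal properly contained between $(0)$ and a maximal ideal in a UFD is principal, generated by any irreducible element it contains — applied to $f$; or (b) argue geometrically that $\Gamma_{P,Q}$ is an irreducible curve, so its vanishing ideal (which is $\BC(P,Q)$ by the Nullstellensatz, using that $\BC(P,Q)$ is already radical since it is prime) is the principal ideal of an irreducible plane curve. Route (a) is the most economical: once one knows $\BC(P,Q)$ is prime, nonzero, and non-maximal, Krull's principal ideal theorem (or the UFD fact) gives principality, and then the divisibility bookkeeping with $h$, $f$, and the generator $p$ is routine. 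A subtle point worth stating explicitly: one must verify $\BC(P,Q)$ is not maximal, which is exactly Remark \ref{rem-GammaPQ} — if it were maximal it would be $(\lambda-\lambda_0,\mu-\mu_0)$, forcing $P = \lambda_0$, $Q = \mu_0$, contradicting $P,Q \notin \coC[\partial]$. With that in place the argument closes: $\BC(P,Q) = (f)$ with $f = \sqrt{h}$ irreducible.
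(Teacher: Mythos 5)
Your overall route is the same as the paper's: use that $\BC(P,Q)$ is a nonzero, non-maximal prime of $\coC[\lambda,\mu]$ (Lemma \ref{lem-prime} plus Remark \ref{rem-GammaPQ}), hence a height-one prime in a UFD and therefore principal with irreducible generator $p$, and then relate $p$ to $h=\dres(P-\lambda,Q-\mu)$ via Theorem \ref{thm-hPQ}. Your option (a) and the paper's geometric phrasing (``defining ideal of an irreducible plane curve'') are the same fact, so no genuinely different decomposition is involved. Two points, however, need repair. First, a small one: from $h\in\BC(P,Q)$ and primality you conclude that \emph{every} irreducible factor of $h$ lies in $\BC(P,Q)$; primality only yields that \emph{at least one} factor does. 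The conclusion you want, $f=\sqrt{h}\in\BC(P,Q)$, still holds, but for a different reason: $\BC(P,Q)$ is radical (being prime) and contains $h$, hence contains $\sqrt{(h)}=(f)$.

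The substantive issue is the final step, where your text literally trails off at ``$p\mid f$ and $f\mid\cdots$''. Both of your observations ($p\mid h$ with $p$ irreducible, and $f\in(p)$) give the \emph{same} divisibility $p\mid f$; neither gives $f\mid p$, and without that you cannot conclude that $f$ and $p$ are associates --- a priori $f$ could be $p$ times further irreducible factors of $h$ not lying in $\BC(P,Q)$. This is exactly the nontrivial content of the theorem (that $h$ is a power of a single irreducible, up to a constant), and it is also the step the paper's own proof glosses over. To close it one needs the geometric input of Corollary \ref{cor-Sol_l0m0}: if $h(\lambda_0,\mu_0)=0$ there is a common eigenfunction $\psi$ with $P\psi=\lambda_0\psi$, $Q\psi=\mu_0\psi$, and then every $g\in\BC(P,Q)$ satisfies $0=g(P,Q)(\psi)=g(\lambda_0,\mu_0)\psi$, so $(\lambda_0,\mu_0)\in\Gamma_{P,Q}$. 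Hence $V(f)=V(h)\subseteq\Gamma_{P,Q}=V(p)$, while $p\mid f$ gives the reverse inclusion; the Nullstellensatz then yields $(f)=I(V(f))=I(V(p))=(p)$ (both ideals being radical), so $f$ and $p$ are associates and $\BC(P,Q)=(f)$ with $f$ irreducible. With that paragraph added, your argument is complete.
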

\begin{proof}
    Recall that $\coC$ is an algebraically closed field. By Remmark \ref{rem-GammaPQ}, $\Gamma$ is an irreducible algebraic curve in $\coC^2$, meaning that its defining ideal $\BC(P,Q)=(f)$ is the ideal generated by an irreducible polynomial $f$. 
    
    In addition, by Theorem \ref{thm-hPQ} we know that $h\in (f)$ and $(f)$ is a radical ideal. Therefore $f=\sqrt{h}$.
\end{proof}

Given commuting differential operators $P$ and $Q$ in $\Sigma[\partial]\backslash\coC[\partial]$ of positive orders $n=\ord(P)$ and $m=\ord(Q)$, 
by Theorem \ref{thm-BCf}, $h= f^ {\bar{r}}$ for some non zero natural number $\bar{r}$. 
Observe that, by \eqref{eq-drespol}, the degrees in $\mu$ and $\lambda$ of $h$ are respectively
\begin{equation}\label{eq-barn}
  n=  \deg_\mu (h) = \bar{r } \deg_\mu (f) , \quad m=  \deg_\lambda (h) = \bar{r} \deg_\lambda (f) , \quad 
  \end{equation}
then $\bar{r} $ divides $\r:=\gcd(n,m)$, and \ $\bar{n}=\deg_{\mu}(f)\leq n$, $\bar{m}=\deg_{\lambda}(f)\leq m$.

\medskip

Let us consider the pure lexicografic monomial ordering in $\Sigma [\lambda,\mu]$ with $\mu>\lambda$. 

Given $g\in \Sigma [\lambda,\mu]$, 
by the Division Theorem,  see \cite{Cox}, Theorem 3, page 64, we can write $g=qf+g_N$  where $q,g_N\in \Sigma [\lambda,\mu]$, 
\begin{equation}\label{eq-nf}
    g_N(\lambda,\mu)=\alpha_{\bar{n}-1}(\lambda)\mu^{\bar{n}-1}+\cdots +\alpha_1(\lambda)\mu+\alpha_0(\lambda), \mbox{with } \alpha_j\in \Sigma[\lambda],
\end{equation}
since the leading monomial  $LT(f)=\mu^{\bar{n}}$ of $f$ does not divide $g_N$. 
We call $g_N$ the {\sf normal form of $g$ with respect to $f$}, and for short we write {$g_N$ is the normal form of \sf $g$ w.r.t. $f$}.  

\medskip

Let us consider a polynomial $g\in \coC [\lambda,\mu]$, and  the differential ideal $[g]$ generated by $g$ in the differential ring $\Sigma[\lambda,\mu]$, whose field of constants is $\coC [\lambda,\mu]$. Observe that $[g]=\{\ell g\mid \ell\in \Sigma[\lambda,\mu]\}$ and $(\ell g)'=\ell'g$, since $g\in \coC [\lambda,\mu]$. Furthermore $[g]$ is the ideal of $\Sigma [\lambda,\mu]$ generated by $g$, but we use the notation $[g]$  to distinguish it from the ideal $(g)$ generated in $\coC [\lambda,\mu]$ by $g$ .

\begin{lem}\label{lem-Ker}
Given commuting differential operators $P$ and $Q$ in {$\Sigma [\partial]\backslash \coC[\partial]$, both of positive order}, with the previous notation,  it holds that $\Ker (\e)= [f]$.
\end{lem}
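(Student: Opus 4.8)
The plan is to prove $\Ker(\e) = [f]$ by establishing both inclusions. For the inclusion $[f] \subseteq \Ker(\e)$: by Theorem~\ref{thm-hPQ} we know $h \in \BC(P,Q) = \Ker(\ec) \subseteq \Ker(\e)$, and since $h = f^{\bar r}$ by Theorem~\ref{thm-BCf}, the fact that $\Sigma[\partial]$ is a domain (so $f(P,Q)^{\bar r} = 0$ forces $f(P,Q) = 0$) gives $f \in \Ker(\e)$. Then for any $\ell \in \Sigma[\lambda,\mu]$ we have $\e(\ell f) = \e(\ell)\cdot \e(f) = \e(\ell) \cdot 0 = 0$ — here I should be slightly careful, since $\e$ is only $\Sigma$-linear, not a ring homomorphism, but $\e(\ell f)$ is obtained from $\ec$ evaluated on the monomials of $\ell f$, and writing $\ell f = \sum_{i,j}\sigma_{i,j}\lambda^i\mu^j f$ with $\sigma_{i,j}\in\Sigma$ gives $\e(\ell f) = \sum_{i,j}\sigma_{i,j} P^iQ^j f(P,Q) = 0$. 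Hence $[f] = \{\ell f \mid \ell \in \Sigma[\lambda,\mu]\} \subseteq \Ker(\e)$.

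The substantive direction is $\Ker(\e) \subseteq [f]$. Take $g \in \Ker(\e)$, so $g(P,Q) = 0$. Using the pure lexicographic order with $\mu > \lambda$ and the Division Theorem, write $g = qf + g_N$ with $g_N$ the normal form of $g$ with respect to $f$ as in \eqref{eq-nf}, so $g_N(\lambda,\mu) = \sum_{j=0}^{\bar n - 1}\alpha_j(\lambda)\mu^j$ with $\alpha_j \in \Sigma[\lambda]$ and $\deg_\mu(g_N) < \bar n = \deg_\mu(f)$. Since $qf \in [f] \subseteq \Ker(\e)$ by the first part, we get $g_N = g - qf \in \Ker(\e)$, i.e. $g_N(P,Q) = 0$. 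The goal is then to show $g_N = 0$, which immediately yields $g = qf \in [f]$.

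To show $g_N = 0$, I would argue by specialization along the curve $\Gamma = \Gamma_{P,Q}$, mimicking the proof of Lemma~\ref{thm-EinBC}. For each $\lambda_0 \in \coC$, since $\coC$ is algebraically closed and $\deg_\mu(f) = \bar n \geq 1$, there exists $\mu_0 \in \coC$ with $f(\lambda_0,\mu_0) = 0$, hence $h(\lambda_0,\mu_0) = 0$; by Corollary~\ref{cor-Sol_l0m0} there is a common eigenfunction $\psi_{\lambda_0}$ solving \eqref{eq-eproblem0} in a Picard-Vessiot extension. Applying $g_N(P,Q) = 0$ to $\psi_{\lambda_0}$ and evaluating the coefficients at $(\lambda_0,\mu_0)$ gives $0 = g_N(P,Q)(\psi_{\lambda_0}) = \bigl(\sum_j \alpha_j(\lambda_0)\mu_0^j\bigr)\psi_{\lambda_0} = g_N(\lambda_0,\mu_0)\psi_{\lambda_0}$, so $g_N(\lambda_0,\mu_0) = 0$ for every point $(\lambda_0,\mu_0) \in \Gamma$. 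Thus the polynomial $g_N$, viewed in $\coC[\lambda,\mu]$ if its coefficients happen to be constants — but in general $g_N \in \Sigma[\lambda,\mu]$ — vanishes on all of $\Gamma$.

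The main obstacle is precisely handling the case where $g_N$ has genuinely non-constant coefficients in $\Sigma$. One clean way: for a fixed point $x$ in the domain where $\Sigma$-elements are evaluated (or, more algebraically, for each $\Sigma$-algebra homomorphism to a suitable field), the specialized polynomial vanishes on $\Gamma$, so it is divisible by $f$ in $\coC[\lambda,\mu]$ up to that evaluation; but $\deg_\mu(g_N) < \deg_\mu(f)$ forces the specialization to be zero. Alternatively, and more robustly, observe that $g_N(P,Q) = 0$ means $g_N \in \Ker(\e)$, and one can run the argument of Lemma~\ref{thm-EinBC} directly: the infinite family $\Psi = \{\psi_{\lambda_0} \mid \lambda_0 \in \coC\}$ of linearly independent eigenfunctions all lie in the finite-dimensional solution space of $g_N(P,Q)(y) = 0$ unless $g_N(P,Q)$ is the zero operator — which it is — and then one extracts from $g_N(P,Q)(\psi_{\lambda_0}) = g_N(\lambda_0,\mu_0)\psi_{\lambda_0} = 0$ that the coefficient function $\sum_j \alpha_j\mu_0^j \in \Sigma$ vanishes identically along the fibers; letting $\lambda_0$ range over the infinitely many values and using that the $\alpha_j$ are polynomials forces $\alpha_j = 0$ for all $j$ by a Vandermonde argument on the distinct $\mu_0$-values, since for each fixed generic $\lambda_0$ there are $\bar n$ roots $\mu_0$ of $f(\lambda_0,\mu) = 0$ and $g_N$ has $\mu$-degree $< \bar n$. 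Hence $g_N = 0$ and $g = qf \in [f]$, completing the proof.
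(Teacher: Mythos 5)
Your proof is correct, but your argument for the hard inclusion $\Ker(\e)\subseteq [f]$ takes a genuinely different route from the paper's. Both proofs reduce, via the Division Theorem, to showing that the normal form $g_N=\sum_{j=0}^{\bar{n}-1}\alpha_j(\lambda)\mu^j$, which satisfies $g_N(P,Q)=0$, must be the zero polynomial. The paper does this purely algebraically inside $\Sigma[\partial]$: an arithmetic argument on $\gcd(n,m)$ shows that the orders $nd_j+jm$ of the nonzero terms $\alpha_j(P)Q^j$ would have to be pairwise distinct for $0\le j\le \bar{n}-1$, whence by \eqref{eq-ord-null} every term, and so every $\alpha_j$, vanishes. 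You instead reuse the eigenfunction-specialization technique of Lemma~\ref{thm-EinBC} and Theorem~\ref{thm-primo}: since $g_N(P,Q)=0$, applying it to a common eigenfunction at each point of $\Gamma_{P,Q}$ gives $g_N(\lambda_0,\mu_0)=0$ in $\Sigma$ for every $(\lambda_0,\mu_0)\in\Gamma_{P,Q}$, and the degree bound $\deg_{\mu}(g_N)<\bar{n}$ then forces $g_N=0$ because a generic fiber of $\Gamma_{P,Q}$ over $\lambda=\lambda_0$ contains $\bar{n}$ distinct points. Your version buys conceptual uniformity with the rest of Section~4 (it needs only Corollary~\ref{cor-Sol_l0m0} plus the degree bound, and no order bookkeeping); the paper's version buys independence from Picard--Vessiot theory for this particular lemma. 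Three touch-ups to yours: drop the first ``evaluate at a point $x$'' alternative, which has no meaning over an abstract differential field --- your Vandermonde/linear-independence version is the one that works; justify the $\bar{n}$ distinct fiber points by noting that $f$, being irreducible over $\coC$ of $\mu$-degree $\bar{n}\ge 1$ in characteristic zero, is separable in $\mu$, so its $\mu$-discriminant is a nonzero polynomial in $\lambda$ and all but finitely many $\lambda_0$ work; and in the easy inclusion the detour through $h=f^{\bar{r}}$ is unnecessary, since Theorem~\ref{thm-BCf} already gives $f\in\BC(P,Q)=\Ker(\ec)$.
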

\begin{proof}
By Theorem \ref{thm-BCf} we know that $\BC (P,Q)=(f)$.  We observe that $[\BC(P,Q)]=[f]\subseteq \Ker(\e)$.
Let us prove the other inclusion.

Given $g\in \Ker (\e)$, let us consider its normal form $g_N$ w.r.t. $f$ as in \eqref{eq-nf}.  If we denote $d_j=\deg(\alpha_j)$ then $\ord(\alpha_j(P)Q^j)=n d_j +j m$, $j=0,1,\ldots ,\bar{n}-1$. Let us define
\begin{equation*}
    \cO=\{n d_j +j m\mid \alpha_j(P)Q^j\neq 0, j\in \{0,1,\ldots ,\bar{n}-1\}\}.
\end{equation*}
Observe that $\e (g)=\e(qf)+\e(g_N)$ and $\e(qf)=\e(q)\e(f)$, only because $f$ is constant.
Since $g(P,Q)=\e(g)=0$ and $f(P,Q)=\e(f)=\ec(f)=0$ then $H=g_N(P,Q)=0$ is a zero operator. If we assume that at least one of the terms of $H$ is nonzero, that is $\cO\neq \emptyset$ then by \eqref{eq-ord-null}
\[n d_{j_1} +j_1 m=n d_{j_2} +j_2 m\]
for distinct $j_1,j_2\in \{0,1,\ldots ,\bar{n}-1\}$. 
Reorganizing and dividing by $\r=\gcd(n,m)$, it holds
\begin{equation}\label{eq-in1}
    \hat{n}|d_{j_1}-d_{j_2}|=\hat{m}|j_1-j_2|,
\end{equation}
where $n=\hat{n}\r$ and $m=\hat{m}\r$, $\gcd(\hat{n},\hat{m})=1$. By \eqref{eq-barn} we have $\hat{n}\leq \bar{n}\leq n$. 
If $|j_1-j_2|< \hat{n}$ then by \eqref{eq-in1} $|d_{j_1}-d_{j_2}|<\hat{m}$ and $\hat{m}\mid\hat{n}$ contradicting that $\hat{n}$ and  $\hat{m}$ are coprime. If $\hat{n}\leq |j_1-j_2|$ then $|j_1-j_2|=\hat{n}t+\hat{j}$, with $0\leq \hat{j}<\hat{n}$. Hence 
$\hat{n}(|d_{j_1}-d_{j_2}|-\hat{m}t)=\hat{m}\hat{j}$,
implying  $\hat{n}\mid\hat{m}$, which is a contradiction. Therefore we conclude that $\cO=\emptyset$, in other words that $g_N$ is the zero polynomial. This proves that $g=qf$, that is $g\in [f]$.
\end{proof}

\begin{thm}\label{thm-primo}
Let us consider commuting differential operators $P$ and $Q$ in $\Sigma[\partial]\backslash\coC[\partial]$, both of positive order, and $f=\sqrt{h}$, with $h=\dres(P-\lambda,Q-\mu)$. Then
    $[f]$ is a prime differential ideal in $ \Sigma [\lambda,\mu]$.
\end{thm}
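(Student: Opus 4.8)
The plan is to exploit the isomorphism already established in Theorem~\ref{thm-CPQ} together with Lemma~\ref{lem-Ker}. Recall that $\Ker(\e) = [f]$ by Lemma~\ref{lem-Ker}, so it suffices to show that $[f]$ is prime by identifying the quotient $\Sigma[\lambda,\mu]/[f]$ with an integral domain. First I would observe that, since $f \in \coC[\lambda,\mu]$ and the derivation $\partial$ acts trivially on $\coC[\lambda,\mu]$, the ideal $[f]$ coincides with the ordinary (non-differential) ideal generated by $f$ in $\Sigma[\lambda,\mu]$; this is noted just before Lemma~\ref{lem-Ker}. Therefore the quotient ring $\Sigma[\lambda,\mu]/[f]$ equals $\Sigma[\lambda,\mu]/(f)$, which can be rewritten as $\Sigma \otimes_{\coC} \bigl(\coC[\lambda,\mu]/(f)\bigr)$.

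Next I would use that $f$ is irreducible over $\coC$ (Theorem~\ref{thm-BCf}), so $\coC[\lambda,\mu]/(f) = \coC[\Gamma_{P,Q}]$ is an integral domain that is a finitely generated $\coC$-algebra. Because $\coC$ is algebraically closed, this domain is geometrically integral; concretely, its field of fractions $\coC(\Gamma_{P,Q})$ is a regular extension of $\coC$, so it remains a domain after any base field extension. Hence $\Sigma \otimes_{\coC} \bigl(\coC[\lambda,\mu]/(f)\bigr)$ is again an integral domain. An alternative, more self-contained route avoiding the language of geometric integrality: view $\Sigma[\lambda,\mu]/(f)$ as a domain directly by checking that $f$ remains irreducible (equivalently squarefree with a single factor) in $\Sigma[\lambda,\mu]$, which follows from \eqref{eq-barn} and the fact that $h = \dres(P-\lambda,Q-\mu)$ factors over $\Sigma$ only through powers of $f$ — but the cleanest argument is the tensor-product/regular-extension one.

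Finally, I would conclude: $[f]$ is a prime ideal of $\Sigma[\lambda,\mu]$ because its quotient is a domain, and it is a differential ideal because $(\ell f)' = \ell' f \in [f]$ for all $\ell \in \Sigma[\lambda,\mu]$ (again using $f' = 0$). The main obstacle I anticipate is the base-change step: showing that irreducibility of $f$ over the algebraically closed field $\coC$ is preserved when passing to the larger coefficient field $\Sigma$. This is exactly where algebraic closedness of $\coC$ is essential — over a non-closed field an irreducible polynomial can factor after extension — and the right tool is the fact that a finitely generated domain over an algebraically closed field has function field that is a regular (hence primary, hence "domain-preserving under base change") extension; I would cite a standard reference such as \cite{Cox} or a commutative algebra text for the statement that $\coC$ algebraically closed implies $\coC[\lambda,\mu]/(f)$ remains a domain after tensoring with any $\coC$-algebra that is a domain.
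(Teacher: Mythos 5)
Your proof is correct, but it takes a genuinely different route from the paper's. The paper argues via solutions: given $g_1\cdot g_2\in[f]$, it evaluates at the infinitely many points of the spectral curve $\Gamma_{P,Q}$, attaches to each point a common eigenfunction via Corollary~\ref{cor-Sol_l0m0}, concludes that one of the operators $g_i(P,Q)$ annihilates an infinite linearly independent family and is therefore the zero operator, and finally invokes Lemma~\ref{lem-Ker} to pass from $\Ker(\e)$ back to $[f]$. You instead stay entirely inside commutative algebra: since $f\in\coC[\lambda,\mu]$, the differential ideal $[f]$ is just the ordinary ideal $(f)$ of $\Sigma[\lambda,\mu]$, the quotient is $\Sigma\otimes_{\coC}\bigl(\coC[\lambda,\mu]/(f)\bigr)$, and because $\coC$ is algebraically closed (and of characteristic zero) the domain $\coC[\lambda,\mu]/(f)$ is geometrically integral, so the tensor product with the field $\Sigma$ is again a domain; flatness of $\Sigma$ over $\coC$ lets you reduce to the fraction field, which is a regular extension of $\coC$. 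Both arguments rest on the irreducibility of $f$ over $\coC$ (Theorem~\ref{thm-BCf}) and use algebraic closedness of $\coC$ in an essential way. What your route buys: it is shorter, it does not depend on Lemma~\ref{lem-Ker} nor on assembling eigenfunctions taken from different Picard--Vessiot extensions into a single linearly independent family (a point the paper treats somewhat informally), and it isolates the only genuinely delicate step --- persistence of irreducibility under base change --- as a cited standard fact. What the paper's route buys: it remains within the Picard--Vessiot/eigenfunction toolkit used throughout the rest of the article and avoids importing the machinery of regular (geometrically integral) extensions.
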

\begin{proof}
As defined in \eqref{eq-GammaPQ}, let $\Gamma_{P,Q}$ be the spectral curve of the pair $P,Q$. For each point $(\lambda_0,\mu_0)\in \Gamma_{P,Q}$, by Corollary \ref{cor-Sol_l0m0} there exists a common solution $\psi_0$ in a Picard-Vessiot extension of $\Sigma$ for $P-\lambda_0$.

Let us consider $g_1,g_2 \in  \Sigma [\lambda,\mu]$ and assume that $g_1\cdot g_2\in [f]$. 
Observe that 
\begin{equation*}
    0=g_1(\lambda_0,\mu_0)\cdot g_2(\lambda_0,\mu_0)\in   \Sigma .
\end{equation*}
Thus, we can write the following partition of $\Gamma_{P,Q}$
\begin{equation*}
   \Gamma_{P,Q}=\Gamma_1\cup \Gamma_2 \mbox{ where }\Gamma_i=\{(\lambda_0,\mu_0)\in \Gamma_{P,Q}\mid g_i(\lambda_0,\mu_0)=0\}.
\end{equation*}
Having an algebraically closed field of constants $\coC$ implies that at least one of the two components, say $\Gamma_1$, has an infinite number of points. 

We  then have an infinite set of linearly independent eigenfunctions $\Psi=\{\psi_0\mid (\lambda_0,\mu_0)\in \Gamma_1\}$.
For every $\psi_0\in \Psi$ in we have
\[0=g_1(\lambda_0,\mu_0)\cdot \psi_0=g_1(P,Q)(\psi_0)\]
This contradicts the fact that $g_1(P,Q)$ is a finite order differential operator. This proves that $g_1\in \Ker(\e)$. By Lemma \ref{lem-Ker}, $g_1\in [f]$.
\end{proof}

We proceed next to emphasize the relationship between the elimination ideals defined in \eqref{eq-elim} and \eqref{eq-elimC} and the ideal of Burchnall\textendash Chaundy polynomials of a commuting pair of ODOs.

\begin{thm}\label{thm-BCPQ}
    Let us consider commuting differential operators $P$ and $Q$ in $\Sigma[\partial]\backslash\coC[\partial]$, both of positive order, and $f=\sqrt{h}$, with $h=\dres(P-\lambda,Q-\mu)$.  It holds that 
    \begin{enumerate}
        \item  The radical of the elimination ideal $ \cE_{\coC} (P-\lambda,Q-\mu)$   equals  $\BC(P,Q)=(f)$.
        \item The radical of the elimination ideal $ \cE (P-\lambda,Q-\mu)$  equals $[f]$. 
    \end{enumerate}
\end{thm}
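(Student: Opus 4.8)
The plan is to deduce both statements from the results already assembled, the key inputs being Theorem \ref{thm-hPQ} ($h\in\BC(P,Q)$), Theorem \ref{thm-BCf} ($\BC(P,Q)=(f)$ with $f$ irreducible), Lemma \ref{lem-Ker} ($\Ker(\e)=[f]$), Theorem \ref{thm-primo} ($[f]$ prime in $\Sigma[\lambda,\mu]$), Lemma \ref{thm-EinBC} ($\cE(P-\lambda,Q-\mu)\subseteq\Ker(\e)$), and the identity \eqref{eq-kernels}, together with the elementary fact from Section \ref{sec-generator} that $h=\dres(P-\lambda,Q-\mu)$ lies in $\cE_{\coC}(P-\lambda,Q-\mu)$ (hence also in $\cE(P-\lambda,Q-\mu)$).

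For part (2) I would argue by a chain of inclusions. On one hand, $\cE(P-\lambda,Q-\mu)\subseteq\Ker(\e)=[f]$ by Lemma \ref{thm-EinBC} and Lemma \ref{lem-Ker}; since $[f]$ is prime by Theorem \ref{thm-primo}, it is radical, so $\sqrt{\cE(P-\lambda,Q-\mu)}\subseteq[f]$. On the other hand, $h\in\cE(P-\lambda,Q-\mu)$ and $h=f^{\bar r}$ for some $\bar r\geq 1$ by Theorem \ref{thm-BCf}, so $f\in\sqrt{\cE(P-\lambda,Q-\mu)}$, whence $[f]\subseteq\sqrt{\cE(P-\lambda,Q-\mu)}$ (using that the radical is a differential ideal when the ambient is a Keigher/Ritt ring, or more simply that $f$ constant forces $[f]=(f)\Sigma[\lambda,\mu]$ and $f\in\sqrt{\cE}$). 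Combining the two inclusions gives $\sqrt{\cE(P-\lambda,Q-\mu)}=[f]$.

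For part (1) I would intersect with $\coC[\lambda,\mu]$. Since $f\in\coC[\lambda,\mu]$, we have $h=f^{\bar r}\in\cE_{\coC}(P-\lambda,Q-\mu)$, so $f\in\sqrt{\cE_{\coC}(P-\lambda,Q-\mu)}$ and thus $(f)\subseteq\sqrt{\cE_{\coC}(P-\lambda,Q-\mu)}$. Conversely, $\cE_{\coC}(P-\lambda,Q-\mu)=\cE(P-\lambda,Q-\mu)\cap\coC[\lambda,\mu]\subseteq\Ker(\e)\cap\coC[\lambda,\mu]=\BC(P,Q)=(f)$ by \eqref{eq-kernels} and Theorem \ref{thm-BCf}; since $(f)$ is prime in $\coC[\lambda,\mu]$ (Lemma \ref{lem-prime}, or irreducibility of $f$), it is radical, so $\sqrt{\cE_{\coC}(P-\lambda,Q-\mu)}\subseteq(f)$. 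Hence $\sqrt{\cE_{\coC}(P-\lambda,Q-\mu)}=(f)=\BC(P,Q)$.

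The only genuinely delicate point is making sure the radical of an ideal is still a differential ideal when needed, so that the inclusion $f\in\sqrt{\cE}$ (with $f$ the square-free part of $h=f^{\bar r}\in\cE$) actually upgrades to $[f]\subseteq\sqrt{\cE}$; here this is painless because $f$ has constant coefficients, so $[f]=(f)\Sigma[\lambda,\mu]$ is already the ordinary ideal generated by $f$, and $(f)\subseteq\sqrt{\cE}$ follows directly from $f^{\bar r}=h\in\cE$. Everything else is bookkeeping with the inclusions established earlier, so I do not expect a serious obstacle.
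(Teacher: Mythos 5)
Your proposal is correct and follows essentially the same route as the paper: both establish $\cE(P-\lambda,Q-\mu)\subseteq\Ker(\e)=[f]$ via Lemma \ref{thm-EinBC} and Lemma \ref{lem-Ker}, intersect with $\coC[\lambda,\mu]$ using \eqref{eq-kernels} and Theorem \ref{thm-BCf}, pass to radicals via the primeness results, and obtain the reverse inclusions from $h=f^{\bar r}\in\cE_{\coC}(P-\lambda,Q-\mu)$. Your explicit remark that $[f]$ is just the ordinary ideal generated by the constant-coefficient polynomial $f$ in $\Sigma[\lambda,\mu]$ is a slightly more careful justification of the step the paper leaves implicit, but it is not a different argument.
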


\begin{proof}
By Lemma \ref{thm-EinBC} and Lemma \ref{lem-Ker}  we obtain
\begin{equation*}
    \begin{array}{rl}
         \cE (P-\lambda,Q-\mu)\subseteq & \Ker(\e)=[f], \\
         \cE_{\coC} (P-\lambda,Q-\mu)=\cE (P-\lambda,Q-\mu)\cap \coC [\lambda, \mu]\subseteq & \Ker(\e) \cap \coC [\lambda, \mu]=\BC(P,Q)=(f),
    \end{array}
\end{equation*}
using also equality \eqref{eq-kernels} and Theorem \ref{thm-BCf}. Now, taking radicals, and applying Theorem \ref{thm-primo}, the following inclusions hold
\begin{equation*}
    \begin{array}{rl}
         \sqrt{\cE (P-\lambda,Q-\mu)}
         \subseteq  & \Ker(\e)
         =[f] \\
         \sqrt{\cE_{\coC} (P-\lambda,Q-\mu)}= &\BC(P,Q)=(f),
    \end{array}
\end{equation*}
where the last equality is satisfied since $\coC$ is an algebraically closed field, see  \cite{fulton2008}.

Moreover, the differential resultant $h =f^{\bar{r}} $ is a polynomial in $\cE (P-\lambda,Q-\mu)$, by Theorem 
\ref{thm-hPQ}. Consequently, $f$ belongs to the radical $ \sqrt{\cE (P-\lambda,Q-\mu)}$. Thus, \(  \sqrt{\cE (P-\lambda,Q-\mu )} = [f]\).
\end{proof}

Summarizing, the spectral curve $\Gamma_{P,Q}$ of a commuting pair $P,Q\in\Sigma[\partial]\backslash\coC[\partial]$ is defined by the irreducible polynomial $f=\sqrt{h}$, with $h=\dres(P-\lambda,Q-\mu)$. The commutative $\coC$-algebra $\coC[P,Q]$ is isomorphic to the coordinate ring 
$\coC [\Gamma_{P,Q}]$ of this curve 
\begin{equation*}
    \coC[P,Q]\simeq \coC [\Gamma_{P,Q}]:=
    \frac{\coC[\lambda,\mu]}{\BC(P,Q)}
    =\frac{\coC[\lambda,\mu]}{(f)}.
\end{equation*}
In addition, the prime differential ideal $[f]$ determines a differential domain 
\begin{equation*}    
\Sigma[\Gamma_{P,Q}]:=\frac{\Sigma[\lambda,\mu]}{[f]},
\end{equation*}
whose fraction field, denoted by $\Sigma(\Gamma_{P,Q})$, will play a key role in the study of the coupled spectral problem  
\[
Py=\lambda y,\quad Qy=\mu y. 
\]

\begin{rem}
    In some cases, the centralizer $\cC (L)$ of a differential operator $L$ is the affine ring of a planar curve, as in the case of algebro-geometric Schrödinger operators where $\cC(L)=\coC[L,A]$. 
The spectral Picard-Vessiot extension of $\Sigma(\Gamma_{L,A} )$ for $(L-\lambda)(y)=0$ introduced in \cite{MRZ2}, is a Liouvillian extension of  $\Sigma ( \Gamma_{L,A} )$ determined by the solution of the  first order greatest common right divisor of $L-\lambda$ and $A-\mu$, as differential operators with coefficients in $\Sigma(\Gamma_{L,A})$.
\end{rem}

\section{Centralizers as coordinate rings}
\label{sec-cenCoord}

Let $L\in \Sigma[\partial]\backslash \coC [\partial]$ be a third order  operator, whose centralizer is non trivial. We proved in Lemma \ref{lem-prime3} that $\BC(L)$ is a prime ideal, and in Theorem \ref{thm-CL} that 
\[
\cC(L)\simeq \frac{\coC[\lambda,\mu_1,\mu_2]}{\BC(L)}.
\]
Our next goal is to obtain a computational description of $\BC(L)$. We will use differential resultants for this purpose.

\subsection{Normalized basis}\label{sec-normalizedbasis}

By Theorem \ref{thm-hPQ}, each operator $A$ in the centralizer $\cC(L)$, together with $L$ satisfies the algebraic equation defined by an irreducible polynomial $f_A (\lambda, \mu)\in\coC[\lambda ,\mu ]$ defined by the differential resultant $\dres(L-\lambda,A-\mu)$, that is
\begin{equation*}
    f_A (L,A)=0, \mbox{ with } f_A (\lambda, \mu)=\mu^3 + a_{2}(\lambda) \mu^{2}+  a_{1}(\lambda) \mu+ a_0(\lambda).
\end{equation*}
The Tschirnhaus transformation of $f_A (\lambda, \mu)$ gives a new polynomial 
\begin{equation}\label{eq-Tch}
    f_{ \tilde{A} }(\lambda, \mu)= f_A (\lambda, \mu-\frac{1}{3} a_{2}(\lambda))=\mu^3 + \tilde{a}_{1}(\lambda)\mu+ \tilde{a}_0 (\lambda).
\end{equation}
 satisfied by $L$ and 
the operator 
\begin{equation}\label{eq-Tsch}
    \tilde{A}:= A-\frac{1}{3} a_{2}(L).
\end{equation}
Observe that, by theorems \ref{thm-CPQ} and \ref{thm-BCPQ}
\begin{equation*}
    \frac{\coC [\lambda,\mu]}{(f_A)}\simeq \coC[L,A]=\coC[L,\tilde{A}]\simeq \frac{\coC [\lambda,\mu]}{(f_{\tilde{A}})}.
\end{equation*}
In addition, $f_{ \tilde{A} }\in \BC (L,\tilde{A})$ and by \eqref{eq-Tch} $f_{ \tilde{A} }$ is an irreducible polynomial because $f_A$ is irreducible over $\coC[\lambda,\mu]$. Thus Theorem \ref{thm-BCPQ} implies
\begin{equation*}
    f_{ \tilde{A} }(\lambda, \mu)=\dres(L-\lambda,\tilde{A}-\mu).
\end{equation*}

Recall that any basis of $\cC(L)$ is a minimal order basis, as in Definition \ref{defi-minBasis}.

\begin{defi}
    Let $L\in \Sigma[\partial]\backslash \coC [\partial]$ be a third order operator, whose centralizer is non trivial. We will call $A\in \cC(L)\neq \coC[L]$ a {\sf normalized operator of $\cC(L)$} if $\dres(L-\lambda,A-\mu)$ has no term of degree $2$ in $\mu$. A basis $\cB(L)=\{1,A_1,A_2\}$ of $\cC(L)$ as a $\coC [L]$-module is called a {\sf normalized basis of $\cC(L)$} if $A_1$ and $A_2$ are normalized operators of $\cC(L)$.
\end{defi}

The next result shows that any other basis 
of $\cC(L)$ as a $\coC [L]$-module is determined by a normalized basis.

\begin{thm}\label{thm-NormalB}
    Let $L\in \Sigma[\partial]\backslash \coC [\partial]$ be a third order operator, whose centralizer is non trivial. Let us consider  a normalized basis $\cB(L)=\{1,A_1,A_2\}$ of $\cC(L)$. Then for any other basis $\{1,B_1,B_2\}$ of $\cC(L)$ it holds that  
\begin{equation}\label{eq-basisN}
 B_i=\alpha_i A_i+q_i(L),\,\,\, i=1,2,   
\end{equation}
with  $\alpha_i\in \coC$, $q_i(\lambda)\in \coC[\lambda]$, $  \deg_\lambda (q_i (\lambda ))\leq \frac{\ord A_i -i}{3}$. Moreover, if $\{1,B_1,B_2\}$ is also a normalized basis then 
\[B_i=\alpha_i A_i,\,\,\, i=1,2.\]
\end{thm}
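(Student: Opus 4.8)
The plan is to exploit the structure theorem \eqref{eq-cen_mod} which says $\cC(L)=\coC[L]\oplus\coC[L]A_1\oplus\coC[L]A_2$ as a free $\coC[L]$-module, together with the minimality of the orders $o_i=\ord(A_i)$ with $o_i\equiv i\ (\mathrm{mod}\ 3)$. Since $\{1,B_1,B_2\}$ is also a basis, each $B_i$ lies in $\cC(L)$, so by \eqref{eq-cen_mod} we can write $B_i=p_0(L)+p_1(L)A_1+p_2(L)A_2$ for polynomials $p_j\in\coC[\lambda]$. First I would argue that a basis element $B_i$ must be a minimal-order operator in $\cC(L)\setminus\coC[L]$ whose order is $\equiv i\ (\mathrm{mod}\ 3)$: indeed $\{1,B_1,B_2\}$ being a $\coC[L]$-module basis forces the orders of $B_1,B_2$ to be incongruent mod $3$ and both incongruent to $0$ (otherwise the module they generate with $1$ over $\coC[L]$ would not be all of $\cC(L)$, by a degree/order count against the free rank-$3$ structure). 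Comparing orders mod $3$ and using minimality of $o_i$ among operators of that congruence class, one concludes $B_i$ must have order exactly $o_i$ and, in the expansion above, the only surviving $A_j$ term is the one with $j=i$; moreover $p_i$ must be a nonzero constant $\alpha_i\in\coC$ (degree reasons: $\ord(p_i(L)A_i)=3\deg p_i+o_i$, and this must equal $o_i$), while $p_0=q_i$ may be a genuine polynomial subject only to $\ord(q_i(L))=3\deg q_i<o_i$, i.e. $\deg_\lambda q_i\le (o_i-i)/3 = (\ord A_i - i)/3$. This establishes \eqref{eq-basisN}.

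For the refinement, suppose in addition that $\{1,B_1,B_2\}$ is a normalized basis, meaning $\dres(L-\lambda,B_i-\mu)$ has no degree-$2$ term in $\mu$. The key computation is to relate $\dres(L-\lambda,B_i-\mu)$ to $\dres(L-\lambda,A_i-\mu)$ under the substitution $B_i=\alpha_i A_i+q_i(L)$. I would use Theorem~\ref{thm-BCPQ} (or the Tschirnhaus discussion preceding the definition of normalized basis): since $\cC(L)$ is a domain and $L,B_i$ commute, $\dres(L-\lambda,B_i-\mu)=f_{B_i}(\lambda,\mu)$ is the irreducible defining polynomial of $\BC(L,B_i)$, a monic cubic in $\mu$. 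The relation $B_i-\mu = \alpha_i A_i + q_i(L) - \mu$ together with $\coC[L,B_i]=\coC[L,A_i]$ (both equal the rank-one subalgebra $\coC[L]\oplus\coC[L]A_i\oplus\coC[L]A_i^2$... — more precisely both generate the same subring since $\alpha_i\ne0$) shows $f_{B_i}(\lambda,\mu)=\alpha_i^{3}\, f_{A_i}\!\big(\lambda,\tfrac{\mu-q_i(\lambda)}{\alpha_i}\big)$ up to the monic normalization; expanding, the coefficient of $\mu^2$ in $f_{B_i}$ is (the coefficient of $\mu^2$ in $f_{A_i}$) $- 3q_i(\lambda)$. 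Since $A_i$ is normalized the first term vanishes, so the $\mu^2$-coefficient of $f_{B_i}$ equals $-3q_i(\lambda)$; normalization of $B_i$ forces $q_i\equiv 0$, hence $B_i=\alpha_i A_i$.

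\textbf{Main obstacle.} The delicate point is the clean bookkeeping in the first paragraph: rigorously ruling out that a basis element $B_i$ could have a lower order than $o_i$ in its congruence class, or could mix in a higher $A_j$ term, purely from the abstract fact that $\{1,B_1,B_2\}$ is a free $\coC[L]$-basis. This needs a careful argument that the map $\coC[L]^3\to\cC(L)$ sending $(p_0,p_1,p_2)\mapsto p_0+p_1B_1+p_2B_2$ is an isomorphism of graded-ish (filtered by order) modules, so that the leading-order data of $B_1,B_2$ must match that of $A_1,A_2$ up to constants — essentially a statement about the associated graded ring $\mathrm{gr}\,\cC(L)\cong\coC[L]\oplus\coC[L]\bar{A_1}\oplus\coC[L]\bar{A_2}$ being a free module whose generators in each degree class mod $3$ are determined up to $\coC^\times$. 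I would handle this by an explicit order-comparison: if some $B_i$ had order strictly less than $o_i$ (with order $\equiv i$), it would contradict minimality of $o_i$; if $p_i$ had positive degree or $p_j\ne0$ for $j\ne i,0$, a top-order term of the wrong congruence class or wrong minimal order would appear, again contradicting that $\{1,B_1,B_2\}$ spans $\cC(L)$ freely over $\coC[L]$. The second obstacle, minor by comparison, is justifying the exact form of $f_{B_i}$ in terms of $f_{A_i}$ under an affine change of the $\mu$-variable with coefficients in $\coC[\lambda]$; this follows from Theorem~\ref{thm-BCPQ} applied to both pairs and the equality of the subrings $\coC[L,A_i]=\coC[L,B_i]$, exactly as in the Tschirnhaus paragraph \eqref{eq-Tch}--\eqref{eq-Tsch}.
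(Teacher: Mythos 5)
Your second half is fine: deducing $q_i\equiv 0$ from the normalization of both bases via the substitution $\mu\mapsto \alpha_i\mu+q_i(\lambda)$ in the Burchnall--Chaundy polynomial is essentially the paper's own computation (the paper phrases it as the appearance of the term $3\alpha_2^2q_2(L)A_2^2$ in $B_2^3$), and your justification that $f_{B_i}$ is obtained from $f_{A_i}$ by that affine change of $\mu$ is exactly the Tschirnhaus argument of Section \ref{sec-normalizedbasis}.

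The genuine gap is the one you yourself flag as the main obstacle, and your proposed fix cannot work. Assume w.l.o.g. $o_1=\ord(A_1)<o_2=\ord(A_2)$ and write $B_2=p_0(L)+p_1(L)A_1+p_2(L)A_2$. Since the three summands have orders in distinct residue classes mod $3$, the order of the sum is their maximum; minimality of $\ord(B_2)$ in the class of $2$ forces only $p_2=\alpha_2\in\coC^{\times}$, $3\deg p_0<o_2$ and $3\deg p_1+o_1<o_2$. It does \emph{not} force $p_1=0$: for any nonzero $c\in\coC$ the set $\{1,\,A_1,\,\alpha_2A_2+cA_1\}$ is a free $\coC[L]$-basis of $\cC(L)$ consisting of minimal-order operators in each congruence class (the change-of-basis matrix over $\coC[L]$ is invertible), so every module-theoretic or filtered/graded property you invoke --- freeness, minimality of orders, the structure of the associated graded module --- is satisfied by a $B_2$ carrying a cross term, and no "top-order term of the wrong congruence class" ever appears. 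This is precisely where the paper spends the substance of its proof, and it does so by a different mechanism: every element of $\cC(L)$ satisfies a monic cubic over $\coC[L]$ because $\dres(L-\lambda,B_2-\mu)$ has degree $3=\ord(L)$ in $\mu$; expanding $(\alpha_2A_2+p(L)A_1+q_2(L))^3$ and reducing $A_i^3$ via the normalized relations $A_i^3=\gamma_{i,1}(L)A_i+\gamma_{i,0}(L)$ produces the degree-three monomials $A_1^2A_2$ and $A_1A_2^2$ with coefficients $3\alpha_2p(L)^2$ and $3\alpha_2^2p(L)$, which are then argued to vanish by comparison with $\beta_2(L)B_2^2+\beta_1(L)B_2+\beta_0(L)$. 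Note that this step uses the \emph{multiplicative} structure of $\cC(L)$ and the hypothesis that $\cB(L)$ is normalized, neither of which enters your first paragraph at all. As written, your proposal omits this mechanism entirely and replaces it with an order count that is provably insufficient.
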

\begin{proof}
We know that $\cC(L)=\coC[L,A_1,A_2]=\coC[L,B_1,B_2]$.
W.l.o.g., let us assume that $o_1=\ord(A_1)<o_2=\ord(A_2)$. 
Since $\{1,B_1,B_2\}$ is a minimal order basis, let us assume that $B_i$ has minimal order congruent with $i$ ($\mod \ 3$). Then $\ord(B_i)=o_i$ and, $o_1<o_2$ together with \eqref{eq-cen_mod},
implies 
\[B_1=\alpha_1 A_1+q_1(L)\mbox{ and }B_2=\alpha_2 A_2+p(L)A_1+q_2(L),\]
with $\alpha_i\in \coC$, $p(\lambda), q_i(\lambda)\in \coC[\lambda]$ and $  \deg_\lambda (q_i (\lambda ))\leq \frac{\ord A_i -i}{3}$.

Let us prove that $p(\lambda)$ is identically zero. We know that $\BC(L,B_2)=(f)$ where $f=\dres(L-\lambda,B_2-\mu)\in \coC[\lambda,\mu]$ is a polynomial of degree $3$ in $\mu$. Thus
\begin{equation}\label{eq-B2}
B_2^3=\beta_2(L)B_2^2+\beta_1(L)B_2+\beta_0(L), \,\,\, \beta_j\in\coC [\lambda].
\end{equation}
Since $\cB(L)$ is a normalized basis it holds that
\begin{equation}\label{eq-A_i3}
    A_i^3=\gamma_{i,1}(L) A_i+\gamma_{i,0}(L),\,\,\, \gamma_{i,k}\in \coC[\lambda].
\end{equation}
Computing $B_2^3=(\alpha_2 A_2+p(L)A_1+q_2(L))^3$, for instance with Maple and using \eqref{eq-A_i3} to replace $A_i^3$, we obtain only the monomials $A_1^2 A_2$ and $A_1 A_2^2$ of degree three in $A_1$ and $A_2$ in \eqref{eq-B23}. Comparing with the r.h.s. of \eqref{eq-B2}, we obtain that the coefficients of the monomials of degree three  must vanish
\begin{equation}\label{eq-B23}
    (\alpha_2 A_2+p(L)A_1+q_2(L))^3=3 \alpha_2 p(L)^2 A_1^2A_2+ 3 \alpha_2^2 p(L) A_1 A_2^2+ \cdots .
\end{equation}
Thus $p(\lambda)$ must be identically zero and we obtain
\[B_2^3=(\alpha_2 A_2+q_2(L))^3=
3 \alpha_2^2 q_2(L) A_2^2+(\alpha_2^3 \gamma_{2,1}(L) +3\alpha_2 q_2(L)^2)A_2+ \alpha_2^3 \gamma_{2,0}(L) +q_2(L)^3.\]
If $\{1,B_1,B_2\}$ is also a normalized basis, the r.h.s. of \eqref{eq-B2} cannot have a term $A_2^2$, then $q_2(\lambda)$ must be the zero polynomial and $B_2=\alpha_2 A_2$. We  analogously prove that $B_1=\alpha_1 A_1$.
\end{proof}

The previous result explains that a normalized basis is unique up to multiplication by constants. We include this result for completion but we do not need to choose the normalized basis in the remaining parts of this work.

\subsection{Generators of the Burchnall\textendash Chaundy ideal of a third order ODO}\label{sec-genBC}

Let us consider  a basis $\cB(L)=\{1,A_1,A_2\}$ of $\cC(L)$. Let us denote
\begin{equation*}
    f_i(\lambda,\mu_i):=\dres(L-\lambda,A_i-\mu_i)=\mu_i^3-\lambda^{o_i}+\cdots \in\coC [\lambda ,\mu_i],
\end{equation*}
if $\cB(L)$ is the normalized basis then
\begin{equation*}
    f_i(\lambda,\mu_i):=\mu_i^3-\gamma_{i,1}(\lambda) \mu_i-\gamma_{i,0}(\lambda).
\end{equation*}
Recall that $f_i$ are irreducible and $\BC(L,A_i)=(f_i)$, $i=1,2$, see Section \ref{sec-generator}.
In addition let as denote by $f_3$ the irreducible polynomial in $\coC[\mu_1,\mu_2]$ obtained as the radical of 
\begin{equation*}
    \dres(A_1-\mu_1,A_2-\mu_2)=\mu_2^{o_1}-\mu_1^{o_2}+\cdots.
\end{equation*}
We have $\BC(A_1,A_2)=(f_3)$.

\medskip

The situation so far is described by the following chain of ideals in $\coC[\lambda,\mu_1,\mu_2]$ and we will determine first if the last two inclusions are identities
\begin{equation}\label{eq-inclusions}
    (0)\subset (f_i)\subset (f_1,f_2)\subseteq (f_1,f_2,f_3) \subseteq \BC(L),\,\,\, i=1,2.
\end{equation}
Let us consider the following algebraic varieties defined by the previous ideals
\begin{equation*}
 \Gamma:=V(\BC(L)) \quad , \quad    \gamma: =V(f_1,f_2,f_3) \quad , \quad 
 \beta :=V(f_1,f_2).
\end{equation*}
By the inclusions in \eqref{eq-inclusions}, we obtain $ \Gamma \subseteq \gamma\subseteq\beta$. Observe that $\beta = V(f_1 ) \cap V (f_2 )$ is the intersection of the irreducible surfaces  
defined by $f_1(\lambda,\mu_1)=0$ and $f_2(\lambda,\mu_2)=0$, therefore $\beta$ is a space algebraic curve. 



The Zariski closure of the projection $\overline{\Pi_{\lambda}(\beta)}$ of $\beta$ onto the plane $\lambda=0$ is a plane algebraic curve defined by the square free part $r(\mu_1,\mu_2)$ of the algebraic resultant $\res_{\lambda} (f_1,f_2)$ w.r.t. $\lambda$, see  \cite{Cox}, Chapter 3, \S 2 and \cite{AB}, Section 2. This projected curve $\overline{\Pi_{\lambda}(\beta)}=V(r)$ could be irreducible or not. Observe that  $r(A_1,A_2)=0$ and therefore $r\in\BC(A_1,A_2)=(f_3)$. Thus there are two possible situations:
\begin{enumerate}
    \item {\bf Irreducible.} If $r(\mu_1,\mu_2)$ is irreducible then $(f_1,f_2)=(f_1,f_2,f_3)$ and $V(r)$ is an irreducible curve.

    \item  {\bf Non irreducible.} If $r(\mu_1,\mu_2)$ is not irreducible then $f_3$ properly divides $r$ and $(f_1,f_2)\neq (f_1,f_2,f_3)$. Including $f_3$ in the ideal allows to select one irreducible component of $V(r)$. The example in Section \ref{sec-Examples} illustrates this situation.
\end{enumerate}

We will prove next that $(f_1,f_2,f_3)$ is the defining ideal of an irreducible space curve, in both situations. By Remark \ref{rem-GammaPQ}, the irreducible component of $\Pi_{\lambda}(\beta)$ determined by $f_3$ is a proper curve, it cannot be a point.

\begin{rem}\label{rem-Grobner}
Let us assume that the projection over the plane $\lambda=0$ of the algebraic curve $\beta$, and therefore $\gamma$,  is birational. Note that this holds for almost all projections (see \cite{fulton2008}[Fulton], p155) and hence we can make this assumption w.l.o.g. More precisely, a valid projection direction can be achieved by an affine change of coordinates, see \cite{AB}, Section 2, which establishes an isomorphism between  coordinate rings, see for instance \cite{SWP}, Theorem 2.24.

A Gr\" obner basis 
$\cF=\{F_0, F_1,\ldots ,F_s\}\subset\coC[\lambda,\mu_1,\mu_2]$ of $(f_1,f_2,f_3)$ w.r.t. the pure lexicographic monomial ordering with $\lambda>\mu_1>\mu_2$ verifies:
\begin{enumerate}
    \item The polynomial $F_0\in \coC[\mu_1,\mu_2]$ is an implicit representation of the Zariski closure of  the projection $\overline{\Pi_{\lambda}(\gamma)}$ of $\gamma$ onto the plane $\lambda=0$. Furthermore, $F_0=f_3$ because $f_3$ is irreducible over $\coC$. Thus this projection is an irreducible algebraic curve $\overline{\Pi_{\lambda}(\gamma)}=V(f_3)$.

    \item On the other hand, since this projection is assumed birational on $\gamma$ then $\cF$ contains a linear polynomial in $\lambda$, say $F_1=g_2(\mu_1,\mu_2)\lambda-g_1(\mu_1,\mu_2)$.
\end{enumerate} 
\end{rem}

\para

Let us consider the pure lexicographic monomial ordering with $\lambda>\mu_1>\mu_2$ in $\Sigma [\lambda,\mu_1,\mu_2]$.
Given $g\in \Sigma[\lambda, \mu_1,\mu_2]$, by the Division Theorem \cite{Cox}, Theorem 3, page 64, we can write
\begin{equation}\label{eq-normalform_Grobner}
    g=\sum_j q_j F_j+g^{\cF}
\end{equation}
where $q_j, g^{\cF}\in\Sigma [\lambda, \mu_1,\mu_2]$. We call $g^{\cF}$  {\sf the normal form of $g$ w.r.t $\cF$}.

\begin{lem}\label{lem-Grobner}
Let us consider a Gr\" obner basis 
$\cF$ of $(f_1,f_2,f_3)$ w.r.t. the pure lexicographic order with $\lambda>\mu_1>\mu_2$.
    The normal form $g^{\cF}$ of $g\in \Sigma[\lambda, \mu_1,\mu_2]$  w.r.t $\cF$ is a polynomial in $\Sigma[\mu_1,\mu_2]$ whose degree in $\mu_1$ is less than $\deg_{\mu_1}(f_3)$.
\end{lem}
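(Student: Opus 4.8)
The plan is to analyze the structure of a Gröbner basis $\cF$ of $(f_1,f_2,f_3)$ with respect to the lexicographic order $\lambda>\mu_1>\mu_2$, and to show that the two assertions about $g^{\cF}$ — that it lies in $\Sigma[\mu_1,\mu_2]$ and that $\deg_{\mu_1}(g^{\cF})<\deg_{\mu_1}(f_3)$ — follow from the presence of the two distinguished elements $F_0=f_3$ and $F_1=g_2(\mu_1,\mu_2)\lambda-g_1(\mu_1,\mu_2)$ in $\cF$ identified in Remark \ref{rem-Grobner}.

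First I would recall the defining property of the normal form produced by the Division Theorem with respect to a Gröbner basis: no term of $g^{\cF}$ is divisible by any leading term $\mathrm{LT}(F_j)$. The element $F_1$ has leading term (in the lex order with $\lambda$ greatest) equal to $g_2(\mu_1,\mu_2)\lambda$, so no monomial of $g^{\cF}$ can contain $\lambda$; more precisely, since the coefficient polynomial $g_2$ may not be a unit one has to argue via reduction over the field $\Sigma(\mu_1,\mu_2)$, or simply note that $\cF$ being a Gröbner basis for lex elimination order means the elimination ideal $(f_1,f_2,f_3)\cap\Sigma[\mu_1,\mu_2]$ is generated by $\cF\cap\Sigma[\mu_1,\mu_2]$ and the reduction of $g$ can be carried out in stages, first eliminating $\lambda$ using the $\lambda$-involving members of $\cF$. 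Either way, the conclusion is $g^{\cF}\in\Sigma[\mu_1,\mu_2]$. Here I would be slightly careful: the clean statement $g^{\cF}\in\Sigma[\mu_1,\mu_2]$ relies on $g_2$ having leading coefficient a unit in $\Sigma$, i.e. $g_2\in\coC[\mu_1,\mu_2]$, which is the case since $F_1$ (like all of $\cF$) has constant coefficients; this is where having a Gröbner basis over $\coC$ rather than over $\Sigma$ matters, and I would state it explicitly.

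Next, for the degree bound in $\mu_1$: once $g^{\cF}\in\Sigma[\mu_1,\mu_2]$, any monomial $\mu_1^a\mu_2^b$ appearing in it with $a\geq\deg_{\mu_1}(f_3)$ would be divisible by $\mathrm{LT}(f_3)=\mathrm{LT}(F_0)$, which in the lex order $\mu_1>\mu_2$ is exactly $\mu_1^{\deg_{\mu_1}(f_3)}$ (up to a nonzero constant, since $f_3\in\coC[\mu_1,\mu_2]$ and one can normalize its leading coefficient in $\mu_1$ — recall $f_3$ has the shape $\mu_2^{o_1}-\mu_1^{o_2}+\cdots$, so its leading term under $\mu_1>\mu_2$ is a constant times $\mu_1^{o_2}$). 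Such a monomial cannot survive in a normal form, so $\deg_{\mu_1}(g^{\cF})<\deg_{\mu_1}(f_3)$, as claimed.

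The main obstacle is the bookkeeping around leading coefficients: making sure that $\mathrm{LT}(f_3)$ really is a pure power of $\mu_1$ (so that the non-divisibility condition translates directly into a degree bound), and that $F_1$'s leading term $g_2\lambda$ genuinely blocks all occurrences of $\lambda$ in $g^{\cF}$ even though $g_2$ is not constant — this is handled by invoking the elimination property of lex Gröbner bases (Elimination Theorem, \cite{Cox}) rather than trying to divide by $F_1$ directly in $\Sigma[\lambda,\mu_1,\mu_2]$. So the proof is essentially: (1) by the Elimination Theorem $g$ reduces modulo $\cF$ to a polynomial in $\Sigma[\mu_1,\mu_2]$; (2) further reduction by $f_3\in\cF$, whose $\mu_1$-leading term is (a constant multiple of) $\mu_1^{\deg_{\mu_1}(f_3)}$, forces $\deg_{\mu_1}(g^{\cF})<\deg_{\mu_1}(f_3)$.
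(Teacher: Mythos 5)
You have correctly isolated the delicate point --- whether division by $\cF$ really removes every occurrence of $\lambda$ --- but the way you close it does not work, so there is a genuine gap. The Elimination Theorem states that $\cF\cap\coC[\mu_1,\mu_2]$ is a Gr\"obner basis of the elimination ideal $(f_1,f_2,f_3)\cap\coC[\mu_1,\mu_2]$; this is a statement about the ideal, not about remainders, and it does not imply that the normal form of an arbitrary $g$ lies in $\Sigma[\mu_1,\mu_2]$. For $g^{\cF}$ to be $\lambda$-free, the monomial $\lambda$ itself must be divisible by some $LT(F_j)$, i.e.\ $\cF$ must contain an element of the form $c\lambda+q(\mu_1,\mu_2)$ with $c$ a nonzero constant. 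An element $F_1=g_2\lambda-g_1$ with $g_2$ non-constant has $LT(F_1)=LT(g_2)\,\lambda$, which does not divide $\lambda$, and ``reducing over $\Sigma(\mu_1,\mu_2)$'' replaces $\lambda$ by $g_1/g_2$, producing a rational function rather than an element of $\Sigma[\mu_1,\mu_2]$. Birationality of the projection only yields $g_2\lambda-g_1\in\BC(L)$ with $g_2\notin\BC(L)$; it does not yield $\lambda\equiv p(\mu_1,\mu_2)$ modulo the ideal, which is what the divisibility condition actually requires.

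The example of Section \ref{sec-Examples} shows that the step genuinely fails along your route: there $g_2=\mu_2^4$, so $LT(F_1)=\lambda\mu_2^4$, and under the parametrization $\aleph(\tau)=(-\tau^3+1,\tau^4,-\tau^5)$ the class of $\lambda$ corresponds to $1-\tau^3$, which is not a polynomial in $\tau^4$ and $\tau^5$ because $3$ is not a nonnegative integer combination of $4$ and $5$; hence the ideal contains no element with leading term a constant multiple of $\lambda$, and the normal form of $g=\lambda$ with respect to $\cF$ is $\lambda$ itself. The paper's own proof is instead a one-liner that reads off $LT(F_1)=\lambda$ from Remark \ref{rem-Grobner}, i.e.\ it takes as part of the normalization that $\cF$ contains an element whose leading term is exactly $\lambda$; your argument neither assumes this nor derives it, and it cannot be derived from birationality plus the Elimination Theorem alone. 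The second half of your proof --- once $g^{\cF}\in\Sigma[\mu_1,\mu_2]$ is granted, the bound $\deg_{\mu_1}(g^{\cF})<\deg_{\mu_1}(f_3)$ follows because the top $\mu_1$-degree term of $f_3$ is a nonzero constant times a pure power of $\mu_1$ --- is correct and coincides with the paper's.
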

\begin{proof}
By the Division Theorem, no monomial of $g^{\cF}$ is divisible by the leading terms  $LT(F_1)=\lambda$ and $LT(F_0)=LT(f_3)$, thus the result follows.
\end{proof}

We are ready to prove that $\BC(L)\subseteq (f_1,f_2,f_3)$.

\begin{thm}\label{thm-BCL} 
Let $L$ be a third order operator  in $ \Sigma[\partial]\backslash \coC [\partial]$, whose centralizer is non trivial.
Given a basis $\{1,A_1,A_2\}$ of the centralizer, let $f_i$, $i=1,2,3$ be the irreducible polynomials over $\coC$ such that $\BC(L,A_i )=(f_i)$ and $\BC(A_1,A_2)=(f_3)$.
Then
    $\BC(L)=(f_1,f_2,f_3)$
\end{thm}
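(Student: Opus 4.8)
The plan is to establish the only inclusion that is not yet available, namely $\BC(L)\subseteq(f_1,f_2,f_3)$, the reverse one being part of the chain \eqref{eq-inclusions}. The main device will be the Gröbner basis $\cF=\{F_0,F_1,\dots,F_s\}\subset\coC[\lambda,\mu_1,\mu_2]$ of the ideal $(f_1,f_2,f_3)$ with respect to the pure lexicographic order with $\lambda>\mu_1>\mu_2$ described in Remark \ref{rem-Grobner}: following that remark I would assume, without loss of generality, that the projection onto the plane $\lambda=0$ is birational on $\gamma=V(f_1,f_2,f_3)$, so that $F_0=f_3$ and $LT(F_1)=\lambda$. The key preliminary remark is that every $F_j$ lies in $(f_1,f_2,f_3)\subseteq\BC(L)=\Ker(\eL)$, hence $\eL(F_j)=0$ for all $j$.

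First I would take an arbitrary $g\in\BC(L)$ and divide it by $\cF$. Since $g$ and the $F_j$ all have coefficients in the field $\coC$, the Division Theorem (\cite{Cox}, Theorem 3, p.~64) yields a decomposition $g=\sum_j q_jF_j+g^{\cF}$ with $q_j,g^{\cF}\in\coC[\lambda,\mu_1,\mu_2]$. Applying the ring homomorphism $\eL$ and using $\eL(F_j)=0$ gives $\eL(g^{\cF})=\eL(g)=0$. By Lemma \ref{lem-Grobner} the normal form $g^{\cF}$ lies in $\coC[\mu_1,\mu_2]$ (it involves no $\lambda$) and satisfies $\deg_{\mu_1}(g^{\cF})<\deg_{\mu_1}(f_3)$. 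As $g^{\cF}$ is free of $\lambda$, $\eL(g^{\cF})=g^{\cF}(A_1,A_2)$, so $g^{\cF}(A_1,A_2)=0$, that is $g^{\cF}\in\BC(A_1,A_2)=(f_3)$, viewed as an ideal of $\coC[\mu_1,\mu_2]$.

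The last step is to deduce $g^{\cF}=0$. Writing $g^{\cF}=h\,f_3$ with $h\in\coC[\mu_1,\mu_2]$ and using that $\coC[\mu_2]$ is an integral domain, one has $\deg_{\mu_1}(g^{\cF})=\deg_{\mu_1}(h)+\deg_{\mu_1}(f_3)$; since $\deg_{\mu_1}(f_3)\geq1$ (indeed $\dres(A_1-\mu_1,A_2-\mu_2)$ already has positive degree in $\mu_1$, so its radical does too), a nonzero $g^{\cF}$ would force $\deg_{\mu_1}(g^{\cF})\geq\deg_{\mu_1}(f_3)$, contradicting Lemma \ref{lem-Grobner}. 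Hence $g^{\cF}=0$ and $g=\sum_j q_jF_j\in(f_1,f_2,f_3)$, which gives the desired inclusion.

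I expect the main obstacle to be conceptual rather than computational: the crux is to see that the Gröbner normal form $g^{\cF}$ of an arbitrary element of $\BC(L)$ is squeezed from two sides — by Lemma \ref{lem-Grobner} its $\mu_1$-degree is strictly below $\deg_{\mu_1}(f_3)$, while the identity $\eL(g^{\cF})=0$ together with its $\lambda$-freeness forces it to be a multiple of $f_3$ — and no nonzero polynomial can satisfy both constraints. The only genuine technical overhead, the birationality reduction that guarantees the shape of $\cF$ (in particular $LT(F_1)=\lambda$) and hence of $g^{\cF}$, has already been settled in Remark \ref{rem-Grobner}, so the argument itself should be short.
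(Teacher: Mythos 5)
Your proposal is correct and follows essentially the same route as the paper: divide $g\in\BC(L)$ by the Gr\"obner basis $\cF$ of Remark \ref{rem-Grobner}, use Lemma \ref{lem-Grobner} to place the normal form $g^{\cF}$ in $\coC[\mu_1,\mu_2]$ with $\deg_{\mu_1}(g^{\cF})<\deg_{\mu_1}(f_3)$, observe that $g^{\cF}\in\BC(A_1,A_2)=(f_3)$, and conclude $g^{\cF}=0$ by the degree squeeze. You merely spell out two steps the paper leaves implicit, namely that $\eL(F_j)=0$ for every element of $\cF$ and the explicit degree comparison forcing $g^{\cF}=0$.
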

\begin{proof}
Given $g\in \BC(L)$, let $g^{\cF}$ be its normal form w.r.t $\cF$. Then $g^{\cF}\in \BC(L)$. Furthermore, by Lemma \ref{lem-Grobner} then $g^{\cF}\in \BC(A_1,A_2)\subset \coC[\mu_1,\mu_2]$ and $\deg_{\mu_1}(g^{\cF})<\deg_{\mu_1}(f_3)$. We conclude that $g^{\cF}$ is identically zero, that is $g\in (\cF)=(f_1,f_2,f_3)$.
\end{proof}

\begin{cor}\label{cor-Gamma}
    Let $L\in \Sigma[\partial]\backslash \coC [\partial]$ be a third order operator, whose centralizer is non trivial. Then 
\begin{equation*}
    \Gamma=V(\BC(L))
\end{equation*}
is an irreducible algebraic curve whose defining ideal is $\BC(L)=(f_1,f_2,f_3)$, with $f_i$ defined above. Furthermore
\begin{equation*}
    \cC(L)\simeq \frac{\coC[\lambda,\mu_1,\mu_2]}{(f_1,f_2,f_3)}.
\end{equation*}
\end{cor}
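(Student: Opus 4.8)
The plan is to derive Corollary~\ref{cor-Gamma} directly from Theorem~\ref{thm-BCL} together with the structural facts already established. First I would invoke Theorem~\ref{thm-BCL} to get the identity $\BC(L)=(f_1,f_2,f_3)$, and Lemma~\ref{lem-prime3} to recall that $\BC(L)$ is a prime ideal of $\coC[\lambda,\mu_1,\mu_2]$. Hence $\Gamma=V(\BC(L))=V(f_1,f_2,f_3)$ is an irreducible algebraic variety, with defining (vanishing) ideal exactly $\BC(L)$ since a prime ideal over an algebraically closed field is the full ideal of its zero set (Nullstellensatz). The isomorphism $\cC(L)\simeq \coC[\lambda,\mu_1,\mu_2]/\BC(L)=\coC[\lambda,\mu_1,\mu_2]/(f_1,f_2,f_3)$ is then immediate from Theorem~\ref{thm-CL} combined with Theorem~\ref{thm-BCL}.

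It remains to argue that $\Gamma$ has dimension exactly $1$, i.e.\ that it is genuinely a curve and not a point or a surface. For the upper bound I would use the discussion preceding the statement: $\gamma=V(f_1,f_2,f_3)=\Gamma$ sits inside $\beta=V(f_1,f_2)=V(f_1)\cap V(f_2)$, the transverse-looking intersection of the two irreducible surfaces $f_1(\lambda,\mu_1)=0$ and $f_2(\lambda,\mu_2)=0$; since each $f_i$ genuinely involves $\mu_i$ and $\lambda$, this intersection has dimension $1$, so $\dim\Gamma\le 1$. For the lower bound, $\Gamma$ cannot be a single point: by the same reasoning as in Remark~\ref{rem-GammaPQ}, a zero-dimensional irreducible variety would force $\BC(L)$ to be a maximal ideal $(\lambda-\lambda_0,\mu_1-\mu_1^0,\mu_2-\mu_2^0)$, whence $L-\lambda_0=0$, contradicting $L\notin\coC[\partial]$. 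Alternatively, and perhaps more cleanly, one cites the transcendence-degree argument from Section~\ref{sec-centralizers}: $\cC((L))$ has transcendence degree $1$ over $\coC$, hence so does its subring $\cC(L)=\coC[L,A_1,A_2]$, and therefore its coordinate ring $\coC[\lambda,\mu_1,\mu_2]/\BC(L)$ has Krull dimension $1$, i.e.\ $\Gamma$ is a curve.

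I would also record the birationality observation from Remark~\ref{rem-Grobner} only if needed: after a generic linear change of coordinates the projection $\Pi_\lambda$ restricted to $\Gamma$ is birational onto the irreducible plane curve $V(f_3)$, which re-confirms irreducibility and dimension one and makes the picture concrete, but it is not logically required once one has the transcendence-degree argument.

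The main obstacle, to the extent there is one, is purely expository: all the heavy lifting (primality, the explicit generators, the isomorphism) is already done in Lemmas~\ref{lem-prime3}, Theorem~\ref{thm-CL} and Theorem~\ref{thm-BCL}. The one point that deserves a careful sentence rather than a wave of the hand is \emph{why $\dim\Gamma=1$}; I expect to handle it via the transcendence-degree statement already cited from \cite{PRZ2019}, which transfers directly to the Krull dimension of the coordinate ring, so in the end the proof is a two-line assembly of prior results plus one dimension remark.
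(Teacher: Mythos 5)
Your proposal is correct and follows essentially the same route as the paper, which states the corollary without a separate proof precisely because it is the assembly of Theorem~\ref{thm-BCL}, Lemma~\ref{lem-prime3}, Theorem~\ref{thm-CL}, and the preceding observation that $\Gamma\subseteq\beta=V(f_1)\cap V(f_2)$ is at most one-dimensional while its projection $V(f_3)$ is a proper curve (Remark~\ref{rem-GammaPQ}). Your extra care over $\dim\Gamma=1$ (the not-a-point argument and the transcendence-degree alternative) matches the paper's own justification and the discussion in Section~\ref{sec-centralizers}.
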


\section{Parametric factorization of algebro-geometric ODOs}\label{sec-factoring-Bsq}

Let $L$ be a third order operator  in $\Sigma[\partial]\backslash \coC [\partial]$, whose centralizer is non trivial. We consider next the factorization of $L-\lambda$, for $\lambda$ an algebraic parameter over $\Sigma$. We know that $\lambda$ is not a free parameter, it is governed by the spectral curve $\Gamma$ of $L$. Thus
\[L-\lambda \in \Sigma[\lambda][\partial]\subset \Sigma[\lambda,\mu_1,\mu_2][\partial].\]
{Recall that $\Sigma[\lambda,\mu_1,\mu_2]$ is a differential ring with the extended derivation $\partial$, see the notation in Section \ref{sec-intro}.}

Let us consider a basis $\{1,A_1,A_2\}$ of $\cC(L)$ and the irreducible polynomials $f_i$, $i=1,2,3$ in $\coC [\lambda,\mu_1,\mu_2]$ such that $\BC(L,A_1)=(f_i)$ and $\BC(A_1,A_2)=(f_3)$. By Theorem \ref{thm-BCL}, 
\begin{equation}\label{eq-diffBCL}
    [\BC(L)]=[f_1,f_2,f_3].
\end{equation}
As differential operators in $\Sigma(\lambda,\mu_1,\mu_2)[\partial]$, the pairs $L-\lambda$ and $A_i-\mu_i$ are right coprime, since their differential resultants are nonzero, by Theorem \ref{lem-elimIdeal}. To consider the factorization of $L-\lambda$ we need an appropriate differential field of coefficients.


\medskip

\subsection{Coefficient field for factorization}\label{sec-DiffField}

Associated to $L$ we have defined in \eqref{def-BCL} the prime ideal $\BC(L)$, see Lemma \ref{lem-prime3}. We will prove in this section  that the differential ideal $[\BC(L)]$ is a prime ideal in $\Sigma [\lambda,\mu_1,\mu_2]$. Thus we can define the differential domain
\begin{equation}\label{eq-anillocurva}
    \Sigma[\Gamma]=\frac{\Sigma [\lambda,\mu_1,\mu_2]}{[\BC(L)]}
\end{equation}
and its fraction field $\Sigma(\Gamma)$, which therefore is a differential field.

\medskip

Considering $\Sigma[\lambda,\mu_1,\mu_2]$ as a $\Sigma$-vector space with basis 
\[\{M_{\alpha}=\lambda^{\alpha_0}\mu_1^{\alpha_1}\mu_2^{\alpha_2}\mid \alpha=(\alpha_0,\alpha_1,\alpha_2)\in\bbN^3\},\]
we can define the $\Sigma$-linear map
\begin{equation}
    \veL: \Sigma [\lambda,\mu_1,\mu_2] \rightarrow \Sigma[\partial], \mbox{ defined by } 
  \veL\left(\sum_{\alpha}  \sigma_{\alpha} M_{\alpha}\right)=\sum_{\alpha}  \sigma_{\alpha} \eL\left(M_{\alpha}\right),
\end{equation}
where $\eL$ is the ring homomorphism defined in \eqref{eq-eL}. Observe that
given $g=\sum_{\alpha} \sigma_{\alpha} M_{\alpha}\in \Sigma [\lambda,\mu_1,\mu_2]$  and $F\in \coC [\lambda,\mu_1,\mu_2]$ then
\begin{equation}\label{eq-gF}
    \veL (g F)=\sum_{\alpha} \sigma_{\alpha}\eL ( M_{\alpha} F)=\veL(g)\eL(F).
\end{equation}

\begin{lem}\label{lem-veLgF}
Let us consider a Gr\" obner basis 
$\cF$ of $(f_1,f_2,f_3)$ w.r.t. the pure lexicographic order with $\lambda>\mu_1>\mu_2$.
Given $g\in \Ker(\veL)$, the normal form $g^{\cF}$ of $g$ w.r.t $\cF$ verifies
\[g^{\cF}\in [\BC(A_1,A_2)]=[f_3].\]
\end{lem}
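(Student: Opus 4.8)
The plan is to mimic the argument used in Lemma \ref{lem-Ker} for the case of a pair, now adapted to the three-variable situation. First I would take $g\in\Ker(\veL)$ and write $g=\sum_j q_j F_j + g^{\cF}$ as in \eqref{eq-normalform_Grobner}. Applying $\veL$ and using \eqref{eq-gF} together with the fact that each $F_j$ is a constant polynomial (so $\veL(q_jF_j)=\veL(q_j)\eL(F_j)$) and $\eL(F_j)=0$ since $F_j\in(f_1,f_2,f_3)=\BC(L)$, we get $\veL(g^{\cF})=\veL(g)=0$, i.e. $g^{\cF}\in\Ker(\veL)$. By Lemma \ref{lem-Grobner}, $g^{\cF}\in\Sigma[\mu_1,\mu_2]$ with $\deg_{\mu_1}(g^{\cF})<\deg_{\mu_1}(f_3)$. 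So the task reduces to showing that such a $g^{\cF}$ lies in $[f_3]$, equivalently that $g^{\cF}$, viewed as a polynomial in $\mu_1,\mu_2$ with coefficients in $\Sigma$, is divisible by $f_3$.

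The key observation is that $\veL$ restricted to $\Sigma[\mu_1,\mu_2]$ plays the role that $\e$ played for the pair $A_1,A_2$: indeed $\veL|_{\Sigma[\mu_1,\mu_2]}$ sends $\mu_1^{\alpha_1}\mu_2^{\alpha_2}$ to $A_1^{\alpha_1}A_2^{\alpha_2}$, so it is precisely the $\Sigma$-linear map associated (as in the construction before Lemma \ref{lem-Ker}) to the commuting pair $A_1,A_2$. Hence $g^{\cF}\in\Ker(\veL|_{\Sigma[\mu_1,\mu_2]}) = \Ker(\varepsilon_{A_1,A_2})$, and by Lemma \ref{lem-Ker} applied to the pair $A_1,A_2$ this kernel equals $[f_3]$, the differential ideal generated by $f_3=\sqrt{\dres(A_1-\mu_1,A_2-\mu_2)}$ in $\Sigma[\mu_1,\mu_2]$. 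Therefore $g^{\cF}\in[f_3]=[\BC(A_1,A_2)]$, which is the claim. One should check that the hypotheses of Lemma \ref{lem-Ker} are met for the pair $A_1,A_2$: they commute, lie in $\Sigma[\partial]\setminus\coC[\partial]$, and have positive order — all of which hold since $A_1,A_2$ are basis elements of $\cC(L)$ of order $o_i\equiv i\pmod 3$, hence positive and non-constant.

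The main obstacle I anticipate is the bookkeeping needed to identify $\veL|_{\Sigma[\mu_1,\mu_2]}$ cleanly with the map $\varepsilon_{A_1,A_2}$ from Section \ref{sec-generator}, and in particular to be sure that the normal form $g^{\cF}$ genuinely has no $\lambda$-dependence so that this identification is legitimate — this is exactly what Lemma \ref{lem-Grobner} provides, using that the Gr\"obner basis contains the linear-in-$\lambda$ element $F_1$ (see Remark \ref{rem-Grobner}) and $F_0=f_3$. A secondary point to be careful about is that in \eqref{eq-gF} the identity $\veL(gF)=\veL(g)\eL(F)$ requires $F$ to be a constant polynomial; this is why we need $\cF\subset\coC[\lambda,\mu_1,\mu_2]$, which holds because $(f_1,f_2,f_3)$ is an ideal of the constant ring $\coC[\lambda,\mu_1,\mu_2]$ and Gr\"obner bases of it can be chosen with constant coefficients. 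With these points in place the proof is short.
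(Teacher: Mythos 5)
Your proposal is correct and follows essentially the same route as the paper's own proof: apply $\veL$ to the normal-form decomposition to get $\veL(g^{\cF})=0$, use Lemma \ref{lem-Grobner} to place $g^{\cF}$ in $\Sigma[\mu_1,\mu_2]$, and then invoke Lemma \ref{lem-Ker} for the commuting pair $A_1,A_2$ to conclude $g^{\cF}\in\Ker(\varepsilon_{A_1,A_2})=[f_3]$. The extra checks you flag (constancy of the $F_j$, identification of $\veL|_{\Sigma[\mu_1,\mu_2]}$ with $\varepsilon_{A_1,A_2}$) are left implicit in the paper but are exactly the right points to verify.
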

\begin{proof}
    With notation as in \eqref{eq-normalform_Grobner}, by \eqref{eq-gF}  we have,
    \[0=\veL(g)=\sum_j \veL(q_j) \eL(F_j)+\veL(g^{\cF})=\veL(g^{\cF}).\]
    Thus $g^{\cF}\in \Ker(\veL)$ and by Lemma \ref{lem-Grobner}, we know that $g^{\cF}\in \Sigma [\mu_1,\mu_2]$. Therefore, by Lemma \ref{lem-Ker}, 
    \[g^{\cF}\in \Ker (\varepsilon_{A_1,A_2})=[\BC(A_1,A_2)]=[f_3].\]
\end{proof}

\begin{lem}
With the previous notation, it holds that $\Ker(\veL)=[\BC(L)]$.
\end{lem}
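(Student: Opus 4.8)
\textbf{Proof strategy for} $\Ker(\veL)=[\BC(L)]$. The plan is to prove the two inclusions separately. The inclusion $[\BC(L)]\subseteq \Ker(\veL)$ is the easy direction: if $g\in\BC(L)$ then $\eL(g)=g(L,A_1,A_2)=0$ by definition \eqref{def-BCL}, so $g\in\Ker(\veL)$ because the restriction of $\veL$ to $\coC[\lambda,\mu_1,\mu_2]$ is $\eL$; since $g'=0$ for such $g$ and $\Ker(\veL)$ is an ideal stable under the extended derivation (indeed $(\ell g)'=\ell' g + \ell g'=\ell' g$ maps into the ideal), we get $[\BC(L)]\subseteq\Ker(\veL)$. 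So the whole content is the reverse inclusion.

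\textbf{The reverse inclusion.} First I would fix a Gr\"obner basis $\cF$ of $(f_1,f_2,f_3)$ as in Remark \ref{rem-Grobner} and Lemma \ref{lem-veLgF}. Given $g\in\Ker(\veL)$, by Lemma \ref{lem-veLgF} its normal form $g^{\cF}$ lies in $[f_3]=[\BC(A_1,A_2)]$, and in particular $g^{\cF}\in[\BC(L)]$ since $\BC(A_1,A_2)\subseteq\BC(L)$ (every $g(\mu_1,\mu_2)$ vanishing on $A_1,A_2$ also vanishes on $L,A_1,A_2$), so $[\BC(A_1,A_2)]\subseteq[\BC(L)]$. On the other hand, by the division identity \eqref{eq-normalform_Grobner} we have $g-g^{\cF}=\sum_j q_j F_j\in[(f_1,f_2,f_3)]=[\BC(L)]$, using Theorem \ref{thm-BCL} which gives $\BC(L)=(f_1,f_2,f_3)$ and hence $[\BC(L)]=[f_1,f_2,f_3]$. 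Combining, $g=(g-g^{\cF})+g^{\cF}\in[\BC(L)]$, which is exactly what we want.

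\textbf{Where the difficulty lies.} The genuinely substantive input has already been packaged into the lemmas of this section: Lemma \ref{lem-veLgF} (which in turn rests on Lemma \ref{lem-Grobner} and on Lemma \ref{lem-Ker} applied to the pair $A_1,A_2$), together with Theorem \ref{thm-BCL}. So the proof of the present lemma itself is a short assembly argument, and I do not expect a real obstacle here --- the only point that requires a moment's care is confirming that the division expression $g-g^{\cF}=\sum_j q_j F_j$ really lands in the \emph{differential} ideal $[\BC(L)]$ and not merely the ordinary ideal generated over $\Sigma[\lambda,\mu_1,\mu_2]$; but since the $F_j$ are constant-coefficient polynomials and $[\BC(L)]$ is by definition the differential ideal they generate (which, $\BC(L)$ being constant, coincides with the ordinary $\Sigma[\lambda,\mu_1,\mu_2]$-ideal they generate, cf.\ the discussion preceding Lemma \ref{lem-Ker}), this is immediate. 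One should also note at the outset that $\Ker(\veL)$ is indeed a differential ideal of $\Sigma[\lambda,\mu_1,\mu_2]$, which justifies speaking of it on the same footing as $[\BC(L)]$.
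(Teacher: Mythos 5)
Your proof is correct and essentially the paper's: both directions rest on \eqref{eq-gF}, the division identity \eqref{eq-normalform_Grobner} and Lemma \ref{lem-veLgF}, the only difference being that at the final step the paper invokes the degree bound of Lemma \ref{lem-Grobner} to force $g^{\cF}=0$, whereas you absorb $g^{\cF}\in[f_3]$ into $[\BC(L)]$ via $f_3\in(f_1,f_2,f_3)$, which is equally valid. One small tightening: for the easy inclusion, rather than asserting that $\Ker(\veL)$ is an ideal stable under $\partial$ (not immediate, since $\veL$ is only $\Sigma$-linear and not a ring homomorphism), it suffices to quote \eqref{eq-gF}: for $g\in\BC(L)$ and $\ell\in\Sigma[\lambda,\mu_1,\mu_2]$ one has $\veL(\ell g)=\veL(\ell)\,\eL(g)=0$.
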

\begin{proof}
    We will show next that the inclusion of $[\BC(L)]=[f_1,f_2,f_3]$ in $\Ker (\veL)$ is natural. Since $f_i$ has constant coefficients, the next differential  ideal  coincides with the ideal generated by $f_1, f_2, f_3$ in $\Sigma[\lambda,\mu_1,\mu_2]$
    \[[f_1,f_2,f_3]=\{g_1f_1+g_2f_2+g_3f_3\mid g_i\in \Sigma [\lambda,\mu_1,\mu_2]\}.\]
    In addition by \eqref{eq-gF}, $\veL (g_i f_i)=\veL(g_i)\eL(f_i)=0$,
    Which proves that $[\BC(L)\subseteq \Ker(\veL)]$.

    Conversely, given $g\in \Ker(\veL)$, by Lemma \ref{lem-veLgF}, $g^{\cF}\in [f_3]$.
    In addition  by Lemma \ref{lem-Grobner},\break  $\deg_{\mu_1}(g^{\cF})<\deg_{\mu_1}(f_3)$.
    Therefore $g^{\cF}$ is identically zero, proving that $g\in [\cF]=[\BC(L)]$.  
\end{proof}

\begin{thm}\label{thm-BCL-prime}
Let $L$ be a third order operator  in $ \Sigma[\partial]\backslash \coC [\partial]$, whose centralizer is non trivial. Then the differential ideal 
    $[\BC(L)]$ is a prime ideal in $\Sigma [\lambda,\mu_1,\mu_2]$.
\end{thm}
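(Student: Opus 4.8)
The plan is to show $[\BC(L)]$ is prime by identifying it as the kernel of a ring homomorphism into a domain, exactly in the spirit of Lemma~\ref{lem-prime3}, but now at the level of $\Sigma$-coefficients rather than just $\coC$. The key observation is the preceding lemma: $\Ker(\veL)=[\BC(L)]$. So it suffices to promote the $\Sigma$-linear map $\veL$ to a ring homomorphism, or at least to argue that $\Sigma[\lambda,\mu_1,\mu_2]/\Ker(\veL)$ embeds into a domain.

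First I would make precise that $\veL$ respects products: for $g,h\in\Sigma[\lambda,\mu_1,\mu_2]$ one has $\veL(gh)=\veL(g)\veL(h)$. This is not immediate from the $\Sigma$-linearity alone, but it follows because the monomials $M_\alpha=\lambda^{\alpha_0}\mu_1^{\alpha_1}\mu_2^{\alpha_2}$ map to $L^{\alpha_0}A_1^{\alpha_1}A_2^{\alpha_2}$, and since $L,A_1,A_2$ pairwise commute and commute with all of $\Sigma$ only through... — careful: the coefficients $\sigma_\alpha\in\Sigma$ need \emph{not} commute with $L$, $A_1$, $A_2$. So $\veL$ is genuinely only $\Sigma$-linear, not multiplicative, and this is the crux of the difficulty. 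The honest route is therefore: identify $\Sigma[\lambda,\mu_1,\mu_2]/[\BC(L)]$ directly. Since $[\BC(L)]=[f_1,f_2,f_3]$ and the $f_i$ have constant coefficients, the quotient $\Sigma[\Gamma]=\Sigma\otimes_{\coC}(\coC[\lambda,\mu_1,\mu_2]/\BC(L))=\Sigma\otimes_{\coC}\coC[\Gamma]$. Now $\coC[\Gamma]$ is a domain (Theorem~\ref{thm-CL}, being isomorphic to the centralizer $\cC(L)$), and it is finitely generated over the algebraically closed field $\coC$. The standard fact is that if $R$ is an integral domain finitely generated over an algebraically closed field $\coC$ and $\Sigma$ is any field extension of $\coC$ (or more generally any $\coC$-domain), then $\Sigma\otimes_\coC R$ is a domain — this is precisely the statement that $\coC[\Gamma]$ is a \emph{geometrically integral} $\coC$-algebra, which holds because $\coC$ is algebraically closed (the function field $\coC(\Gamma)$ is a regular extension of $\coC$, hence linearly disjoint from any extension).

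So the concrete plan of steps is: (i) invoke the preceding lemma to reduce to showing $\Sigma[\lambda,\mu_1,\mu_2]/[\BC(L)]$ is a domain; (ii) use that $f_1,f_2,f_3$ have coefficients in $\coC$ to write $[\BC(L)]=[f_1,f_2,f_3]=(f_1,f_2,f_3)\cdot\Sigma[\lambda,\mu_1,\mu_2]$, the extension of the ideal $\BC(L)\subset\coC[\lambda,\mu_1,\mu_2]$; (iii) deduce $\Sigma[\Gamma]\cong\Sigma\otimes_{\coC}\coC[\Gamma]$; (iv) since $\coC$ is algebraically closed and $\coC[\Gamma]$ is a domain, conclude $\Sigma\otimes_\coC\coC[\Gamma]$ is a domain, citing e.g. Eisenbud or the linear-disjointness criterion for geometric integrality over an algebraically closed field. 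Then $[\BC(L)]$ is prime.

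The main obstacle I expect is purely expository: pinning down \emph{which} base-change-of-domains result to cite and phrasing it cleanly, since the paper works over an arbitrary differential field $\Sigma$ rather than a field extension of $\coC$ of finite type. The cleanest formulation needs only that $\coC$ is algebraically closed inside $\coC(\Gamma)$ and that $\coC(\Gamma)/\coC$ is separable (automatic in characteristic zero), giving that $\coC(\Gamma)$ is a regular extension, hence $\coC(\Gamma)\otimes_\coC\Sigma$ is a domain for every field $\Sigma\supseteq\coC$; localizing, $\coC[\Gamma]\otimes_\coC\Sigma$ is a domain. An alternative, self-contained route that avoids citing tensor-product facts is to mimic the Lemma~\ref{lem-Ker}/Theorem~\ref{thm-primo} argument geometrically: given $g_1g_2\in[\BC(L)]$, evaluate at points $(\lambda_0,\mu_{1,0},\mu_{2,0})\in\Gamma\subset\coC^3$ to get $g_1(P_0)g_2(P_0)=0$ in $\Sigma$, partition $\Gamma$ into the two zero-loci, use that one component is infinite (as $\coC$ is algebraically closed and $\Gamma$ is a curve), produce infinitely many linearly independent common eigenfunctions on that component, and conclude $\veL(g_i)=0$, i.e. $g_i\in\Ker(\veL)=[\BC(L)]$. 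I would present the tensor-product argument as the main proof with the eigenfunction argument as a remark, since the former is shorter.
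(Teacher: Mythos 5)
Your argument is correct, but it is a genuinely different route from the paper's. The paper proves primality of $[\BC(L)]$ by reducing modulo the Gr\"obner basis $\cF$: given $g_1g_2\in[\BC(L)]$ it writes $g_i=\Delta_i+g_i^{\cF}$ with $\Delta_i\in[\BC(L)]$ and $g_i^{\cF}\in\Sigma[\mu_1,\mu_2]$, deduces $g_1^{\cF}g_2^{\cF}\in[f_3]$, and then invokes Theorem \ref{thm-primo} (primality of $[f_3]$ in $\Sigma[\mu_1,\mu_2]$, itself proved by the infinite-eigenfunction argument) to conclude. Your main argument instead observes that $[\BC(L)]=[f_1,f_2,f_3]$ is the extension to $\Sigma[\lambda,\mu_1,\mu_2]$ of the prime ideal $\BC(L)\subset\coC[\lambda,\mu_1,\mu_2]$ (legitimate, since the $f_i$ have constant coefficients, so the differential ideal they generate is the ordinary extension ideal --- this is \eqref{eq-diffBCL}), identifies the quotient with $\Sigma\otimes_{\coC}\coC[\Gamma]$, and uses that a finitely generated domain over an algebraically closed field of characteristic zero is geometrically integral, so the base change to any field extension $\Sigma\supseteq\coC$ remains a domain. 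This is sound: you correctly flag that $\veL$ is only $\Sigma$-linear and not multiplicative, which is exactly the obstruction the paper's normal-form computation is designed to circumvent, and your tensor-product argument sidesteps it entirely. What your route buys is brevity and generality --- it needs only Lemma \ref{lem-prime3} (primality of $\BC(L)$ over $\coC$) plus a standard base-change fact, with no dependence on the order-three structure, the Gr\"obner basis, or Picard--Vessiot theory; what it costs is an appeal to an external result on regular extensions/linear disjointness, whereas the paper's proof stays inside the differential-algebraic machinery it has already built (and which it reuses elsewhere, e.g.\ in Proposition \ref{lem-ELBC}). Your sketched eigenfunction alternative is closer in spirit to the paper's Theorem \ref{thm-primo} but is not what the paper actually does for $[\BC(L)]$ itself; making it rigorous would require lifting points of $V(f_3)$ to $\Gamma$ and producing common eigenfunctions of the triple $(L,A_1,A_2)$, which the paper only carries out later in Proposition \ref{lem-ELBC}.
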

\begin{proof}
Given $g\in [\BC(L)]$, let us assume that $g=g_1\cdot g_2$, with $g_1,g_2\in \Sigma [\lambda,\mu_1,\mu_2]$. 
By Lemma \ref{lem-veLgF}, $g^{\cF}\in Ker(\veL)$. By Lemma \ref{lem-Grobner}, $g_i^{\cF}\in \Sigma [\mu_1,\mu_2]$ and using \eqref{eq-normalform_Grobner} we can write
\[g_i=\Delta_i+g_i^{\cF},\,\,\, \mbox{ with }\Delta_i\in [\BC(L)].\]
Thus
\[g_1\cdot g_2=\Delta_1 \Delta_2+\Delta_1 g_2^{\cF}+\Delta_2 g_1^{\cF}+g_1^{\cF}g_2^{\cF}.\]
Therefore,   $0=\veL(g)=\veL(g_1\cdot g_2)=\veL (g_1^{\cF}g_2^{\cF})$.
This implies that $g_1^{\cF}g_2^{\cF}\in [f_3]$ which is a prime ideal in $\Sigma [\mu_1,\mu_2]$, by Theorem \ref{thm-primo}. We can conclude that  $g_i^{\cF}\in [f_3]$, which shows that $g_i\in [\BC(L)]$, for $i=1$ or $i=2$, proving that $[\BC(L)]$ is a prime ideal.
\end{proof}

We are now ready to work over the field $\Sigma (\Gamma)$, the fraction field of the domain $\Sigma [\Gamma]$ defined in \eqref{eq-anillocurva}. Regarding the differential structure of $\Sigma (\Gamma)$,
we consider the standard 
differential structure of the quotient
ring $\Sigma [\Gamma]$ given by a derivation $\tilde{\partial}$ defined by:
\begin{equation}\label{eq-tildeparial}
    \tilde{\partial}(q + [\BC(L)] := \partial(q) + [\BC(L)]),\,\,\, q\in \Sigma[\lambda,\mu_1,\mu_2].
\end{equation}
Observe that $\tilde{\partial}$ is a derivation in $\Sigma [\Gamma]$ because $[\BC(L)]$ is a differential ideal. By
abuse of notation we will denote by $\partial$ the derivation $\tilde{\partial}$ and its extension to the fraction field  $\Sigma (\Gamma)$.

\subsection{The intrinsic right factor}\label{sec-intrinsicRFactor}

We will study next the factorization of $L-\lambda$ as a differential operator with coefficients in the differential field $(\Sigma(\Gamma),\partial)$ as defined in Section \ref{sec-DiffField}.

\medskip

As differential operators in $\Sigma(\Gamma)[\partial]$, $L-\lambda$ and $A_i-\mu_i$, $i=1,2$ have a non trivial common factor, namely their monic greatest common right divisor $\cL_i:=\gcrd(L-\lambda,A_i-\mu_i)$. For instance, this follows from the Resultant Theorem \ref{thm-DR}, since $f_i=\dres(L-\lambda,A_i-\mu_i)$ is zero in $\Sigma(\Gamma)$. Similarly, for some positive integer $r$, it holds that $f_3^{r}=\dres (A_1-\mu_1,A_2-\mu_2)$, which is zero in $\Sigma(\Gamma)$, implying that $\cL_3:=\gcrd(A_1-\mu_1,A_2-\mu_2)$ is  a differential operator of order greater or equal than one.

We will prove next that $\cL_1$ and $\cL_2$ are indeed differential operators of order $1$ in $\Sigma(\Gamma)[\partial]$. Moreover, the goal of this section is to show that $\cL_1$ and $\cL_2$ are equal as differential operators in $\Sigma(\Gamma)[\partial]$, providing a right factor of $L-\lambda$ that is intrinsic to the nature of $L$. This factor is linked to the hypothesis of having a nontrivial centralizer. 

\medskip

We chose differential subresultants to compute greatest common right divisors, because  they have fairly explicit expressions for their computation, that also allow us to obtain some important theoretical conclusions.  
As defined in Appendix \ref{app-subresultant}, let us consider the first differential subresultants
\begin{align*}
    &\sdres_1 (L-\lambda,A_i-\mu_i)=\phi_{i,0}+\phi_{i,1}\partial ,\,\,\, i=1,2,\\
    &\sdres_1 (A_1-\mu_1,A_2-\mu_2)=\phi_{3,0}+\phi_{3,1}\partial 
\end{align*}
where, for $j=0,1$
\begin{align}\label{eq-phiij}
    &\phi_{i,j}(\lambda,\mu_i ):=\det(S_1^j (L-\lambda ,A_i-\mu_i  )), i=1,2\\
    &\phi_{3,j}(\mu_1 ,\mu_2 ):=\det(S_1^j (A_1-\mu_1  ,A_2-\mu_2 )).
\end{align}
Observe that the coefficients $\phi_{i,j}$ are nonzero polynomials in $\Sigma [\lambda,\mu_1  ,\mu_2 ]$, but we need to check if they are nonzero in $\Sigma(\Gamma)$. Recall that from Section 
\ref{sec-centralizers}, $o_i=\ord(A_i)\equiv i (\mod \ 3 )$.

\begin{lem}\label{lem-phinonzero}
With the notation established above. The class  $\phi_{i,j}+[\BC(L)]$, $i=1,2$, $j=0,1$ in $\Sigma(\Gamma)$ is non zero. In addition, if $\gcd(o_1,o_2)=1$ then the class of $\phi_{3,j}+[\BC(L)]$ is non zero.
\end{lem}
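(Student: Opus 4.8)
The plan is to show that each $\phi_{i,j}$, viewed modulo $[\BC(L)]$, cannot vanish because its vanishing would force the greatest common right divisor $\cL_i=\gcrd(L-\lambda,A_i-\mu_i)$ to have order $0$, contradicting the fact (already established via the Resultant Theorem \ref{thm-DR}) that $f_i=\dres(L-\lambda,A_i-\mu_i)$ is zero in $\Sigma(\Gamma)$ and hence $\cL_i$ has order $\geq 1$. Concretely, I would first recall from Appendix \ref{app-subresultant} the link between the differential subresultants and the order of the gcrd: the order of $\gcrd(L-\lambda,A_i-\mu_i)$ over a differential field equals the least $k$ such that the $k$-th differential subresultant $\sdres_k$ is nonzero, and in particular $\cL_i$ has order exactly $1$ iff $\sdres_1(L-\lambda,A_i-\mu_i)=\phi_{i,0}+\phi_{i,1}\partial$ is nonzero over that field, with the leading coefficient $\phi_{i,1}$ nonzero precisely when the order is not greater than $1$. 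So the statement splits into two tasks: (a) show $\phi_{i,1}\not\equiv 0 \pmod{[\BC(L)]}$, i.e. $\cL_i$ has order at most $1$; and (b) show $\phi_{i,0}\not\equiv 0 \pmod{[\BC(L)]}$, i.e. $\cL_i$ has order at least $1$ — but (b) is exactly the content of $f_i$ being a zero-divisor-free vanishing resultant, already noted in the text.

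For task (a), the key point is that $L-\lambda$ and $A_i-\mu_i$ cannot have a common right factor of order $\geq 2$ over $\Sigma(\Gamma)$. The reason is that $\Sigma(\Gamma)$ embeds, after specialization at a generic point $P_0$ of $\Gamma$, into a Picard--Vessiot extension of $\Sigma$, and over such a field the solution space of $L-\lambda_0$ is $3$-dimensional while the common solutions of $L-\lambda_0$ and $A_i-\mu_{i,0}$ form the eigenspace of (the action of) $A_i$ on $\ker(L-\lambda_0)$ with eigenvalue $\mu_{i,0}$; since $A_i$ is a \emph{normalized} (or at least minimal-order) generator congruent to $i\pmod 3$, this action is cyclic/semisimple enough that the eigenvalue $\mu_{i,0}$ on the spectral curve is generically simple, forcing the gcrd to have order exactly $1$. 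I would make this rigorous by arguing at the level of the function field: if $\phi_{i,1}\in[\BC(L)]$, then over $\Sigma(\Gamma)$ the gcrd has order $\geq 2$, so $A_i-\mu_i$ would be left-divisible in $\Sigma(\Gamma)[\partial]$ by an order-$\geq 2$ operator $\cL_i$ that also right-divides $L-\lambda$; comparing with the isomorphism $\cC(L)\simeq \coC[\lambda,\mu_1,\mu_2]/\BC(L)$ from Theorem \ref{thm-CL} and Corollary \ref{cor-Gamma}, and using that $[\BC(L)]$ is prime (Theorem \ref{thm-BCL-prime}) so that $\Sigma(\Gamma)$ is a genuine field, one derives a dimension contradiction: the common solution space has dimension $=\ord(\cL_i)\geq 2$, but it must be the $1$-dimensional $\mu_i$-eigenspace since $L-\lambda$ restricted to $\Sigma(\Gamma)$ already has a factorization structure that separates the three roots of $f_i$ in $\mu_i$.

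For the $\phi_{3,j}$ part, under the additional hypothesis $\gcd(o_1,o_2)=1$ the pair $A_1,A_2$ has rank $1$, and the same argument applies verbatim to $\cL_3=\gcrd(A_1-\mu_1,A_2-\mu_2)$: its order over $\Sigma(\Gamma)$ is at least $1$ because $f_3^{r}=\dres(A_1-\mu_1,A_2-\mu_2)$ vanishes there, and at most $1$ because in the rank-$1$ case the common eigenspace of the commuting operators $A_1,A_2$ on a generic solution space is one-dimensional (Burchnall--Chaundy in rank $1$). Hence $\phi_{3,1}\not\equiv 0$ and $\phi_{3,0}\not\equiv 0$ modulo $[\BC(L)]$.

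The main obstacle I expect is task (a): rigorously excluding $\ord(\cL_i)\geq 2$ over the function field $\Sigma(\Gamma)$ rather than at individual closed points. The clean way around it is to pass to a generic closed point $P_0=(\lambda_0,\mu_{1,0},\mu_{2,0})\in\Gamma$ (using that $\Sigma(\Gamma)$ is the fraction field of a domain, so specialization at a generic point is faithful on a nonzero polynomial), invoke Corollary \ref{cor-Sol_l0m0} / Theorem \ref{thm-DR} to get a $1$-dimensional space of genuine common eigenfunctions there, and show that the eigenvalue of $A_i$ on $\ker(L-\lambda_0)$ is a simple root of $f_i(\lambda_0,\cdot)$ for generic $P_0$ — which holds because $f_i$ is squarefree (it is the radical of $\dres(L-\lambda,A_i-\mu_i)$), so its discriminant in $\mu_i$ is a nonzero polynomial and vanishes only on a proper subvariety of $\Gamma$. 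This pins $\ord(\cL_i)$ to exactly $1$ at generic $P_0$, hence $\phi_{i,1}(P_0)\neq 0$ for generic $P_0$, hence $\phi_{i,1}\notin[\BC(L)]$ since $[\BC(L)]$ is prime.
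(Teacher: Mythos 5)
Your proposal takes a genuinely different route from the paper's, and it contains a real gap. For contrast: the paper's proof is a two-line degree count. Each $\phi_{i,j}=\det(S_1^j(L-\lambda,A_i-\mu_i))$ is a nonzero polynomial lying in $\Sigma[\lambda,\mu_i]$ with $\deg_{\mu_i}\phi_{i,j}<3$, while $[\BC(L)]\cap\Sigma[\lambda,\mu_i]=[\BC(L,A_i)]=[f_i]$ and $\deg_{\mu_i}f_i=3$; any nonzero element of $[f_i]$ has $\mu_i$-degree at least $3$, so $\phi_{i,j}\notin[\BC(L)]$. The same count with $\deg_{\mu_1}\phi_{3,j}<o_2=\deg_{\mu_1}f_3$ (using $\bar r=1$ when $\gcd(o_1,o_2)=1$) disposes of the third pair. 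No Picard--Vessiot extensions, generic points, or eigenvalue multiplicities are needed.

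The gap in your argument is in what you call task (b). You assert that $\phi_{i,0}\not\equiv0\pmod{[\BC(L)]}$ ``is exactly the content'' of the resultant $f_i$ vanishing over $\Sigma(\Gamma)$, i.e.\ of $\cL_i$ having order at least $1$. That equivalence is false. The vanishing of the resultant gives $\ord(\cL_i)\geq1$, and your task-(a) analysis gives $\ord(\cL_i)=1$, hence $\sdres_1\neq0$ and (by Theorem \ref{thm-FST}) $\phi_{i,1}\not\equiv0$; but an order-one gcrd with $\phi_{i,0}\equiv0$ is perfectly consistent with all of this --- it would simply mean $\cL_i=\partial$ up to a unit of $\Sigma(\Gamma)$. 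Excluding that case needs a separate argument: for instance, $\partial$ right-dividing $L-\lambda$ over $\Sigma(\Gamma)$ forces $(L-\lambda)(1)=u_0-\lambda$ into $[\BC(L)]\cap\Sigma[\lambda]$, which is zero since every nonzero element of $[f_i]$ has positive $\mu_i$-degree --- or one just applies the paper's degree count to $\phi_{i,0}$ directly. The same omission recurs for $\phi_{3,0}$. A secondary weakness is that the ``cyclic/semisimple enough'' step in your first rendering of task (a) is not an argument; your closing paragraph (generic closed point of $\Gamma$, simple roots of the irreducible $f_i(\lambda_0,\cdot)$ off a proper subvariety, one-dimensional eigenspaces, primeness of $[\BC(L)]$) does repair it, but at the cost of machinery the statement does not require.
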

\begin{proof}
    By the construction of the matrices $S_1^j$ as in \eqref{eq-Sij}, the degree in $\mu_i$ of $\phi_{i,j}$ is less than  $3$, for $i=1,2$. 
    Thus $\phi_{i,j}$, $i=1,2$ does not belong to $[\BC(L)]$ because otherwise it is included in
    \[[\BC(L)]\cap \Sigma [\lambda,\mu_i]=[\BC(L,A_i)]=[f_i],\]
    which is not possible.
    Similarly, if $\gcd(o_1,o_2)=1$ then the degree in $\mu_1$ of $\phi_{3,j}$ is less than $o_2$ and $\phi_{3,j}$ does not belong to $[\BC(L)]$ because otherwise, it is included in
    \[[\BC(L)]\cap \Sigma [\mu_1,\mu_2]=[\BC(A_1,A_2)]=[f_3],\]
    observe that in the case $\gcd(o_1,o_2)=1$ then $f_3=\dres (A_1-\mu_1,A_2-\mu_2)$ has degree $o_2$ in $\mu_1$.
\end{proof}

By Lemma \ref{lem-phinonzero}, $\sdres_1 (L-\lambda,A_i-\mu_i)$ are nonzero differential operators in $\Sigma(\Gamma)[\partial]$ of order one. 
The First Subresultant Theorem \ref{thm-FST}  implies that $\cL_i=\gcrd(L-\lambda,A_i-\mu_i)$ equals the first subresultant. 
We can make them monic and write
\begin{equation}\label{eq-phii}
    \cL_i=\gcrd(L-\lambda,A_i-\mu_i)=\partial+{\phi_i+[\BC(L)]},\,\,\,  \mbox{ with }\phi_i:=\frac{\phi_{i,0}}{\phi_{i,1}}.
\end{equation}

\medskip

The next lemma will be important to prove that the right factors $\cL_i$ coincide over the field of the spectral curve $\Sigma(\Gamma)$. For this purpose, let us define the elimination ideal
\begin{equation}\label{eq-EL}
    \cE(L):=(L-\lambda,A_1-\mu_1 , A_2-\mu_2 )\cap \Sigma[\lambda,\mu_1 , \mu_2 ].
\end{equation}

\begin{prop}\label{lem-ELBC}
With the previous notation, it holds  that   $\cE(L)\subset [\BC(L)]$.
\end{prop}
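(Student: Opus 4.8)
The plan is to prove the stronger inclusion $\cE(L)\subseteq\Ker(\veL)$ and then conclude with the identity $\Ker(\veL)=[\BC(L)]$ established just above. This is the three operator analogue of Lemma \ref{thm-EinBC}, and I would follow that scheme. Fix $g\in\cE(L)$, so that
\[
g=C\,(L-\lambda)+D_1\,(A_1-\mu_1)+D_2\,(A_2-\mu_2),\qquad C,D_1,D_2\in\Sigma[\lambda,\mu_1,\mu_2][\partial],
\]
with $g\in\Sigma[\lambda,\mu_1,\mu_2]$. The operator $\veL(g)=g(L,A_1,A_2)$ has some finite order $q$, and it suffices to show that $\veL(g)=0$.

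The first and key step is to produce, for every $\lambda_0\in\coC$, a nonzero common eigenfunction of $L$, $A_1$, $A_2$ with $L$-eigenvalue $\lambda_0$. I would fix $\lambda_0$, pass to a Picard-Vessiot extension of $\Sigma$ for $L-\lambda_0$, and set $U=\{y\mid (L-\lambda_0)(y)=0\}$; since the leading coefficient of $L$ is a unit of the field $\Sigma$, $U$ is a nonzero finite-dimensional $\coC$-vector space. Because $A_1$ and $A_2$ commute with $L$ and act $\coC$-linearly, they map $U$ into itself, and because they commute with one another their restrictions to $U$ are commuting $\coC$-endomorphisms of $U$. Over the algebraically closed field $\coC$, commuting endomorphisms of a nonzero finite-dimensional space admit a common eigenvector, so there exist $\psi_{\lambda_0}\in U\setminus\{0\}$ and $\mu_1^{0},\mu_2^{0}\in\coC$ with $A_1(\psi_{\lambda_0})=\mu_1^{0}\psi_{\lambda_0}$, $A_2(\psi_{\lambda_0})=\mu_2^{0}\psi_{\lambda_0}$ and $L(\psi_{\lambda_0})=\lambda_0\psi_{\lambda_0}$. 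I expect this to be the main obstacle, in the sense that it is the only genuinely new ingredient beyond the pair case treated in Lemma \ref{thm-EinBC}.

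Then I would specialize $\lambda\mapsto\lambda_0$, $\mu_i\mapsto\mu_i^{0}$ in the displayed relation for $g$; since $\lambda,\mu_1,\mu_2$ are constants this is a differential ring homomorphism $\Sigma[\lambda,\mu_1,\mu_2][\partial]\to\Sigma[\partial]$, carrying the displayed relation to an identity of operators in $\Sigma[\partial]$ which, together with the eigenfunction property of $\psi_{\lambda_0}$, yields
\[
\veL(g)(\psi_{\lambda_0})=g(\lambda_0,\mu_1^{0},\mu_2^{0})\,\psi_{\lambda_0}=C^{0}(L-\lambda_0)(\psi_{\lambda_0})+D_1^{0}(A_1-\mu_1^{0})(\psi_{\lambda_0})+D_2^{0}(A_2-\mu_2^{0})(\psi_{\lambda_0})=0,
\]
where $C^{0},D_i^{0}$ are the specialized coefficients. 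Since $\coC$ is infinite, $\{\psi_{\lambda_0}\mid\lambda_0\in\coC\}$ is an infinite family of eigenfunctions of $L$ attached to pairwise distinct eigenvalues, hence $\coC$-linearly independent, and every one of them solves $\veL(g)(y)=0$. A nonzero differential operator of order $q$ has a $q$-dimensional solution space, so $\veL(g)=0$; that is, $g\in\Ker(\veL)=[\BC(L)]$, which proves $\cE(L)\subseteq[\BC(L)]$.
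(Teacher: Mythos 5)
Your proof is correct, and it reaches the same reduction as the paper (show $g\in\Ker(\veL)$ and invoke $\Ker(\veL)=[\BC(L)]$, which is indeed available at this point), but the key step --- producing an infinite family of common eigenfunctions of $L$, $A_1$, $A_2$ --- is done by a genuinely different route. The paper parametrizes by the $A_1$-eigenvalue $\eta_1$: it applies Corollary \ref{cor-Sol_l0m0} to the commuting pair $A_1,A_2$ to get a common eigenfunction of $A_1-\eta_1$ and $A_2-\eta_2$, and then uses the linear-in-$\lambda$ Gr\"obner basis element $F_1=g_2(\mu_1,\mu_2)\lambda-g_1(\mu_1,\mu_2)$ of $\BC(L)$, via the identity $g_2(A_1,A_2)L-g_1(A_1,A_2)=0$, to conclude that this eigenfunction is automatically an $L$-eigenfunction with eigenvalue $g_1(\eta)/g_2(\eta)$; this relies on the birational-projection assumption of Remark \ref{rem-Grobner} and discards the finitely many points with $g_2(\eta)=0$. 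You instead parametrize by the $L$-eigenvalue $\lambda_0$ and apply the standard fact that the commuting $\coC$-endomorphisms $A_1|_U,A_2|_U$ of the $3$-dimensional solution space $U=\ker(L-\lambda_0)$ in a Picard--Vessiot extension admit a common eigenvector over the algebraically closed field $\coC$. Your version is more elementary and self-contained (no Gr\"obner basis, no birationality hypothesis, no excluded points, and linear independence is immediate from the distinct $L$-eigenvalues), while the paper's version has the side benefit of exhibiting explicitly how points of $\Gamma$ arise as lifts of points of the plane projection $V(f_3)$, a description it reuses elsewhere. Both arguments share the paper's own mild informality of treating $\{\psi_{\lambda_0}\}$ as living in one common differential extension; since the paper does exactly the same in Lemma \ref{thm-EinBC}, this is not a gap you need to repair.
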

\begin{proof}
Given a polynomial $g$ in $ \cE (L)$ then
\begin{equation}\label{eq-gEL}
g(\lambda,\mu_1,\mu_2)=C(L-\lambda)+D_1(A_1-\mu_1)+D_2(A_2-\mu_2),\,\,\, C,D_1,D_2\in \Sigma[\lambda,\mu_1,\mu_2][\partial].
\end{equation}

With the notation in Remark \ref{rem-Grobner} and by Corollary \ref{cor-Gamma}, every point of $\Gamma$ is  determined by a point $(\eta_1,\eta_2)$ of the projection of the curve $\Gamma$ onto the plane $\lambda=0$, in the following way
\begin{equation}\label{eq-lift}
  \left(\frac{g_1(\eta_1,\eta_2)}{g_2(\eta_1,\eta_2)}, \eta_1,\eta_2\right)\mbox{ with }f_3(\eta_1,\eta_2)=0,  
\end{equation}
except for a finite number of points such that $g_2(\eta_1,\eta_2)=0$.

As in Remark \ref{rem-Grobner}, let us consider a Gr\" obner basis 
$\cF=\{F_0, F_1,\ldots ,F_s\}\subset\coC[\lambda,\mu_1,\mu_2]$ of $(f_1,f_2,f_3)$ w.r.t. the pure lexicographic monomial ordering with $\lambda>\mu_1>\mu_2$. 
Recall that $F_1=g_2(\mu_1,\mu_2)\lambda-g_1(\mu_1,\mu_2)$, with $g_i\in\coC [\mu_1,\mu_2]$. 
By \eqref{eq-lift}, since $\BC(L)=(f_1,f_2,f_3)=(\cF)$ we have
\begin{equation}\label{eq-F1}
    F_1(L,A_1,A_2)=g_2(A_1,A_2)L-g_1(A_1,A_2)=0.
\end{equation}

Given $\eta_1\in \coC$, there exists $\eta=(\eta_1,\eta_2)\in \Pi_{\lambda}(\Gamma)$, the Zariski closure of the projection of $\Gamma$ onto the plane $\lambda=0$. Let $\cE_1$ be the Picard-Vessiot extension of $\Sigma$ for $A_1-\eta_1$. By Corollary \ref{cor-Sol_l0m0}, since $f_3(\eta_1,\eta_2)=0$ there exists $\psi_1\in \cE_1$ a common eigen function of the spectral problem 
\begin{equation}\label{eq-epi}
    A_1 y=\eta_1 y,\,\,\, A_2 y=\eta_2 y.
\end{equation}
Thus $\Psi:=\{\psi_1 \mid \eta_1\in \coC\}$ is an infinite set of linearly independent eigenfunctions for \eqref{eq-epi}.
Observe that, by \eqref{eq-F1}
\begin{equation*}
    0=(L g_2(A_1,A_2)-g_1(A_1,A_2))(\psi_1)
    =L (g_2(\eta)\cdot \psi_1)-g_1(\eta)\cdot \psi_1
\end{equation*}
thus 
\begin{equation}\label{eq-Leta}
    L(\psi_1) = \frac{g_1(\eta)}{g_2(\eta)}\cdot \psi_1.
\end{equation}

To finish the argument, for every $\psi_1\in \Psi$, by \eqref{eq-epi}  and \eqref{eq-Leta}
\begin{align}
    &g(L,A_1,A_2)(\psi_1)=\\
    &g\left(\frac{g_1(\eta)}{g_2(\eta)}, \eta_1,\eta_2\right)\cdot \psi_1=
\left(\Tilde{C}\left(L-\frac{g_1(\eta)}{g_2(\eta)}\right)+\Tilde{D}_1(A_1-\eta_1)+\Tilde{D}_2(A_2-\eta_2) \right)(\psi_1)=0,
\end{align}
\[\]
using \eqref{eq-gEL} and obtaining $\Tilde{C}$, $\Tilde{D}_1$ and $\Tilde{D}_2$ in $\Sigma[\partial]$.
Since $\Psi$ is an infinite set, we can conclude that $g(L,A_1,A_2)=0$, which proves the result.

\end{proof}

Let us denote by $\overline{\phi}_i$ the class  $\phi_i+[\BC(L)]$ in $\Sigma(\Gamma)$. Looking at these functions as representatives of objects in $\Sigma(\Gamma)$, we can establish the following result.

\begin{lem}\label{thm-phi-global} 
With the previous notation. Let us consider $\phi_i$ as in \eqref{eq-phii} then 
\begin{equation}\label{eq-phi-global}
\phi: =\overline{\phi}_1=\overline{\phi}_2\in \Sigma(\Gamma). 
\end{equation}
If $\gcd(o_1,o_2)=1$ then $\phi=\overline{\phi}_3$.
\end{lem}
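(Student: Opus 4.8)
The statement asserts that the two first-order right factors $\cL_1$ and $\cL_2$ of $L-\lambda$ coincide as operators over $\Sigma(\Gamma)$, i.e.\ that $\overline{\phi}_1=\overline{\phi}_2$, and moreover that in the coprime case they agree with $\overline{\phi}_3$. The natural strategy is to show that both $\phi_i$ describe the \emph{same} intrinsic object: the logarithmic derivative of a common eigenfunction. The key bridge is Proposition \ref{lem-ELBC}, which tells us that any $\partial$-free consequence of the simultaneous relations $L-\lambda$, $A_1-\mu_1$, $A_2-\mu_2$ already lies in $[\BC(L)]$, hence vanishes in $\Sigma(\Gamma)$.

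\textbf{First step.} I would first argue that $\cL_1$ and $\cL_2$ are each a common right divisor of \emph{all three} operators $L-\lambda$, $A_1-\mu_1$, $A_2-\mu_2$ over $\Sigma(\Gamma)[\partial]$. Indeed, $\cL_i=\gcrd(L-\lambda,A_i-\mu_i)$ by construction, so it divides $L-\lambda$ and $A_i-\mu_i$; one must then check it also divides $A_j-\mu_j$ for $j\neq i$. To see this, compute the remainder $R$ of $A_j-\mu_j$ on right division by the order-one operator $\cL_i$ over $\Sigma(\Gamma)[\partial]$; this $R$ has order $0$, i.e.\ $R\in\Sigma(\Gamma)$, and it is obtained from $A_j-\mu_j$ by left multiplication by some operator plus a multiple of $\cL_i$, hence it is a $\partial$-free element of the ideal $(L-\lambda,A_1-\mu_1,A_2-\mu_2)$ in $\Sigma(\Gamma)[\partial]$. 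Clearing denominators, a suitable $\Sigma[\lambda,\mu_1,\mu_2]$-multiple of a lift of $R$ lies in $\cE(L)$, so by Proposition \ref{lem-ELBC} it lies in $[\BC(L)]$ and therefore $R=0$ in $\Sigma(\Gamma)$. Thus $\cL_i$ right-divides $A_j-\mu_j$ as well.

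\textbf{Second step.} Once $\cL_1$ right-divides $A_2-\mu_2$ and $\cL_2$ right-divides $A_1-\mu_1$, both $\cL_1$ and $\cL_2$ are common right divisors of the pair $L-\lambda,\ A_1-\mu_1$ (and equally of $L-\lambda,\ A_2-\mu_2$). Since $\gcrd$ is the \emph{maximal} such divisor and all these operators have order one and are monic, we get $\cL_1\mid\cL_2$ and $\cL_2\mid\cL_1$ over $\Sigma(\Gamma)[\partial]$, forcing $\cL_1=\cL_2$; comparing the degree-zero coefficients via \eqref{eq-phii} gives $\overline{\phi}_1=\overline{\phi}_2$, so we may define $\phi$ as in \eqref{eq-phi-global}. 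For the last assertion, assume $\gcd(o_1,o_2)=1$. Then by Lemma \ref{lem-phinonzero} the operator $\cL_3=\sdres_1(A_1-\mu_1,A_2-\mu_2)=\partial+\overline{\phi}_3$ is a genuine order-one monic operator over $\Sigma(\Gamma)[\partial]$, equal to $\gcrd(A_1-\mu_1,A_2-\mu_2)$ by the First Subresultant Theorem \ref{thm-FST}. The same remainder argument as above (now dividing $L-\lambda$ by $\cL_3$, and invoking Proposition \ref{lem-ELBC}) shows $\cL_3$ right-divides $L-\lambda$, so $\cL_3$ is a common right divisor of $L-\lambda,\ A_1-\mu_1$; by maximality $\cL_3\mid\cL_1$ and, both being monic of order one, $\cL_3=\cL_1$, i.e.\ $\overline{\phi}_3=\phi$.

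\textbf{Main obstacle.} The delicate point is the passage in the first step from ``the order-zero remainder $R$, a priori an element of $\Sigma(\Gamma)$'' to ``a genuine element of the elimination ideal $\cE(L)\subseteq\Sigma[\lambda,\mu_1,\mu_2]$'' so that Proposition \ref{lem-ELBC} applies: one must carefully track denominators, since $\cL_i$ has coefficients in $\Sigma(\Gamma)$ rather than $\Sigma[\lambda,\mu_1,\mu_2]$, and one must ensure the clearing factor does not itself vanish in $\Sigma(\Gamma)$ (here the nonvanishing of the subresultant coefficients $\phi_{i,1}$ from Lemma \ref{lem-phinonzero} is what is needed). Everything else is the standard maximality property of $\gcrd$ in the (left) Euclidean domain $\Sigma(\Gamma)[\partial]$.
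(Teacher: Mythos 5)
Your proof is essentially correct and rests on the same key ingredient as the paper's: a $\partial$-free element of the ideal generated by $L-\lambda$, $A_1-\mu_1$, $A_2-\mu_2$ must lie in $\cE(L)\subseteq[\BC(L)]$ by Proposition \ref{lem-ELBC} and hence vanish in $\Sigma(\Gamma)$. The difference is in which $\partial$-free element you produce. The paper avoids your ``main obstacle'' entirely: since Lemma \ref{lem-linCombS1} writes each subresultant $\phi_{i,0}+\phi_{i,1}\partial=C_i(L-\lambda)+D_i(A_i-\mu_i)$ with $C_i,D_i$ already in $\Sigma[\lambda,\mu_1,\mu_2][\partial]$ (no denominators), the single cross-multiplied combination
\[
g=\phi_{2,1}\bigl(\phi_{1,0}+\phi_{1,1}\partial\bigr)-\phi_{1,1}\bigl(\phi_{2,0}+\phi_{2,1}\partial\bigr)=\phi_{1,0}\phi_{2,1}-\phi_{2,0}\phi_{1,1}
\]
is manifestly a polynomial element of $\cE(L)$, hence zero in $\Sigma(\Gamma)$, and dividing by the nonzero classes $\overline{\phi}_{1,1}\overline{\phi}_{2,1}$ gives $\overline{\phi}_1=\overline{\phi}_2$ in one step. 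Your route (show $\cL_1$ right-divides $A_2-\mu_2$, then invoke maximality of the gcrd) is sound in outline, but the denominator-clearing you flag is a genuine issue, not a formality: the remainder $R$ lives a priori in $\Sigma(\Gamma)$, and a naive lift of the relation $A_2-\mu_2=M\cL_1+R$ only holds modulo $[\BC(L)]$, whose generators need not themselves lie in $\cE(L)$ (only $h=f^{\bar r}$ is guaranteed to be in the elimination ideal, not its radical). The clean way to close this is to pseudo-divide $A_2-\mu_2$ by the non-monic operator $\phi_{1,0}+\phi_{1,1}\partial$ over $\Sigma[\lambda,\mu_1,\mu_2]$, obtaining $\phi_{1,1}^{N}(A_2-\mu_2)=M(\phi_{1,0}+\phi_{1,1}\partial)+R_0$ with $R_0\in\Sigma[\lambda,\mu_1,\mu_2]$; then $R_0\in\cE(L)$ honestly, Proposition \ref{lem-ELBC} kills it, and Lemma \ref{lem-phinonzero} lets you divide by $\overline{\phi}_{1,1}^{N}$. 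With that repair your argument goes through, and it has the mild advantage of also delivering the statement $\phi=\overline{\phi}_3$ in the coprime case by the identical mechanism, a point the paper's proof leaves implicit.
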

\begin{proof}
By the definition of subresultants in Appendix \ref{app-subresultant}, we know that 
\[\cL_i=\partial+\phi_i \in (L-\lambda, A_i-\mu_i).\]
The next one is a differential polynomial in $\Sigma[\lambda,\mu_1 , \mu_2 ]$
\[g=\phi_{1,1}\phi_{2,1} (\cL_1-\cL_2)=
\phi_{1,0}\phi_{2,1}-\phi_{2,0}\phi_{1,1}.\]
Observe that it belongs to the elimination ideal $\cE(L)$ as defined in \eqref{eq-EL}. 
In addition, by Lemma \ref{lem-ELBC}, 
\[g\in \cE(L)\subseteq [\BC(L)].\]
Therefore $\overline{g}\equiv 0$ in $\Sigma(\Gamma)$. 
This proves \eqref{eq-phi-global}. 
\end{proof}

We are ready to state an important application of the results of this paper.

\begin{thm}\label{thm-factor}
Let $L$ be a third order operator  in $ \Sigma[\partial]\backslash \coC [\partial]$, whose centralizer is non trivial. Let $\Gamma$ be the spectral curve of $L$. Then $L-\lambda$ has an order one factor $\partial+\phi$ in $\Sigma(\Gamma)[\partial]$.
We call $\partial +\phi $ \  {\sf the intrinsic right factor of $L$}. 
Moreover, given any basis $\{1,A_1,A_2\}$ of $\cC (L)$ as a $\coC[L]$-module, this factor is the greatest common right divisor in $\Sigma(\Gamma)[\partial]$
\begin{equation}\label{eq-globalFac}
  \partial+\phi=\gcrd(L-\lambda,A_1-\mu_1,A_2-\mu_2).  
\end{equation}
\end{thm}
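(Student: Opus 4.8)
The plan is to assemble Theorem~\ref{thm-factor} from the pieces already established, treating the two claims (existence of an order-one right factor, and its coincidence with the triple gcrd) in turn. First I would invoke Lemma~\ref{lem-phinonzero} together with the First Subresultant Theorem~\ref{thm-FST}: since the classes $\phi_{i,j}+[\BC(L)]$ are nonzero in $\Sigma(\Gamma)$ for $i=1,2$ and $j=0,1$, the first differential subresultant $\sdres_1(L-\lambda,A_i-\mu_i)$ is a genuine order-one operator over $\Sigma(\Gamma)$, and it equals $\cL_i=\gcrd(L-\lambda,A_i-\mu_i)$ there. Making it monic as in \eqref{eq-phii} gives $\cL_i=\partial+\overline{\phi}_i$ with $\overline{\phi}_i=\phi_i+[\BC(L)]$. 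By Lemma~\ref{thm-phi-global}, $\overline{\phi}_1=\overline{\phi}_2=:\phi\in\Sigma(\Gamma)$, so $\cL_1=\cL_2=\partial+\phi$ as operators in $\Sigma(\Gamma)[\partial]$. Since $\cL_1$ is a right divisor of $L-\lambda$ in $\Sigma(\Gamma)[\partial]$, this already yields the factorization $L-\lambda=N\cdot(\partial+\phi)$ and establishes the first assertion.

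For the second assertion I would argue that $\partial+\phi$ is the gcrd of the three operators $L-\lambda$, $A_1-\mu_1$, $A_2-\mu_2$ in $\Sigma(\Gamma)[\partial]$. The gcrd of the three divides the gcrd of any pair, hence divides $\cL_1=\partial+\phi$, so $\gcrd(L-\lambda,A_1-\mu_1,A_2-\mu_2)$ is either $\partial+\phi$ (up to a unit) or $1$. To rule out $1$, note that $\partial+\phi=\cL_1$ already divides $L-\lambda$ and $A_1-\mu_1$ on the right, so it suffices to check that $\partial+\phi$ also right-divides $A_2-\mu_2$ over $\Sigma(\Gamma)$; equivalently, that $\cL_1$ and $\cL_2$ share this common right factor, which is exactly what $\cL_1=\cL_2$ gives. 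Therefore $\partial+\phi$ is a common right divisor of all three, and being of maximal order among the divisors of the pair $(L-\lambda,A_1-\mu_1)$ it is the gcrd of the triple, proving \eqref{eq-globalFac}. (One can alternatively phrase this using that $\cL_1,\cL_2$ both lie in the left ideal $(L-\lambda,A_1-\mu_1,A_2-\mu_2)$ of $\Sigma(\Gamma)[\partial]$, so the gcrd of the triple has order at least one, and then the divisibility chain forces equality with $\partial+\phi$.)

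The genuinely substantive input is Lemma~\ref{thm-phi-global}, which in turn rests on Proposition~\ref{lem-ELBC} ($\cE(L)\subseteq[\BC(L)]$): the identity $\overline{\phi}_1=\overline{\phi}_2$ is not formal, since a priori $\gcrd(L-\lambda,A_1-\mu_1)$ and $\gcrd(L-\lambda,A_2-\mu_2)$ are computed from different subresultant systems and live in different polynomial subrings before passing to $\Sigma(\Gamma)$. The main obstacle in the write-up is therefore bookkeeping rather than conceptual: one must be careful that the monic normalizations $\phi_i=\phi_{i,0}/\phi_{i,1}$ make sense only after the denominators have been shown nonzero in $\Sigma(\Gamma)$ (Lemma~\ref{lem-phinonzero}), and that the cross-multiplied difference $\phi_{1,0}\phi_{2,1}-\phi_{2,0}\phi_{1,1}$ indeed lands in $\cE(L)$ because $\cL_1-\cL_2$ is a $\Sigma(\Gamma)$-linear combination of $L-\lambda$, $A_1-\mu_1$, $A_2-\mu_2$ that happens to have order zero, hence is an element of the elimination ideal. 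Once these nonvanishing and membership facts are in place, the theorem is immediate. Finally, the last sentence about the case $\gcd(o_1,o_2)=1$ adding $\overline{\phi}_3$ to the common value is recorded already in Lemma~\ref{thm-phi-global} and needs no separate argument.
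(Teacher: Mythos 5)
Your proposal is correct and follows essentially the same route as the paper: identify $\gcrd(L-\lambda,A_i-\mu_i)=\partial+\overline{\phi}_i$ via the First Subresultant Theorem~\ref{thm-FST} (with Lemma~\ref{lem-phinonzero} guaranteeing the denominators are nonzero in $\Sigma(\Gamma)$), then apply Lemma~\ref{thm-phi-global} to conclude $\overline{\phi}_1=\overline{\phi}_2$ and hence the triple gcrd statement. The only difference is that you spell out the divisibility argument for the triple gcrd, which the paper leaves implicit; your accounting of where the real content lies (Lemma~\ref{thm-phi-global} via Proposition~\ref{lem-ELBC}) matches the paper's structure.
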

\begin{proof}
    By the First Subresultant Theorem \ref{thm-FST}, 
    \[\partial+\overline{\phi_i}=\gcrd(L-\lambda,A_i-\mu_i), i=1,2\]
    and by Lemma \ref{thm-phi-global} 
    \[\partial+\phi=\partial+\overline{\phi_1}=\partial+\overline{\phi_2}\]
    thus \eqref{eq-globalFac} follows.
\end{proof}

{As a consequence of \eqref{eq-globalFac} and \eqref{eq-phii}
 $\partial+\phi$, the intrinsic right factor of $L-\lambda$ as a differential operator in $\Sigma[\Gamma][\partial]$ can be computed as $\gcrd(L-\lambda,A_i-\mu_i)$, for any $A_i\in \cC(L)$ of minimal order $i\ (\mod \ 3)$ for $i=1$ or $i=2$.}

 \begin{rem}
Let us assume that $L=\partial^3+u_1\partial+u_0$ is in normal form. By direct computation we obtain the factorization 
\begin{equation}\label{eq-intrinsic-rfactor}
    L-\lambda = \left(\partial^2 -\phi\partial+
    u_1 -2\phi' 
    +\phi^2 
    \right)\cdot (\partial +\phi ) ,
\end{equation}
in  $\Sigma(\Gamma)[\partial ]$, under the condition
\begin{equation}\label{eq-relacion}
    \phi^{3}+u_{{1}}
\phi  -3\,\phi  \phi'  -u_{{0}}  + \phi''  +\lambda =0 .
\end{equation}
\end{rem}   

\medskip

\noindent {\bf Factorization at each point $P_0$ of $\Gamma$.}
 The intrinsic right factor is a global factor in the following sense. For every $P_0=(\lambda_0,\eta_1,\eta_2)\in \Gamma\backslash Z$, where $Z$ is a finite number of points in $\coC^3$, we obtain a right factor in $\Sigma[\partial]$ of 
 \[
L-\lambda_0 = N_i \cdot (\partial+\phi_i (P_0 )), \ i=1,2
\]
where $\partial+\phi_i(P_0)$ is the greatest common right divisor of $L-\lambda_0$ and $A_i-\eta_i$ in $\Sigma[\partial]$ and $\phi_i(P_0)$ is the result of replacing $(\lambda, \mu_1,\mu_2)$ in $\phi_i$ by $(\lambda_0,\eta_1,\eta_2)$. {The set of points $Z$ to be removed can be described as }
\[{Z=\{ (\lambda_0,\eta_1,\eta_2)\in\coC^3 \mid (\lambda_0,\eta_i)\in Z_i, i=1,2\}}\]
{where $Z_i=\{(\lambda_0,\eta_i)\in \coC^2\mid\phi_{i,1}(\lambda_0,\eta_i)=0\}$, i=1,2, are finite sets of points, which is a consequence of \eqref{eq-phiij} together with Remark \ref{rem-Grobner}. Furthermore $\partial+\phi_1 (P_0 )=\partial+\phi_2 (P_0 )$ is the greatest common right divisor of $L-\lambda_0$, $A_1-\eta_1$ and $A_2-\eta_2$ for every $P_0=(\lambda_0,\eta_1,\eta_2)\in \Gamma\backslash Z$.}

\begin{rem}\label{rem-eigenring}
    The factorization of an ordinary differential operator with coefficient in a differential field $(\Sigma=\coC(x),\partial=d/dx)$ can be achieved by means of the so called eigenring of $L$, defined by M. Singer in \cite{singer1996testing} as
    \[\cE(L)=\{\overline{R}\in \Sigma[\partial]/\Sigma[\partial]\cdot L\mid LR \mbox{ is divisible by } L \},\]
    where $\overline{R}$ denotes the equivalence class of $R\in \Sigma[\partial]$ in the module $\Sigma[\partial]/\Sigma[\partial]\cdot L$. 
    The centralizer $\cC(L)$ can be seen as a subring of the eigenring, but note that the representative of order smaller than $3$ in the eigenring of an element of the centralizer may not belong itself to the centralizer. {In the example of Section \ref{sec-Examples}, } since $\ord(A_1)=4$ then $A_1$ has a representative in $\cE(L)$  that does not belong to the centralizer.

One could study the factorization of $L-\lambda_0$, for any $\lambda_0\in\coC$ by means of the algorithms based on the eigenring, as in \cite{AMW}.  
What this algorithms would not do is to identify the spectral curve. Furthermore, these algorithms would not allow a formal treatment of $\lambda$ as an algebraic parameter over the coefficient field $\Sigma$. See the example in Section \ref{sec-Examples}.
\end{rem}

\section{A space spectral curve for factorization}\label{sec-Examples}

There are many examples of rank $1$ operators whose centralizers are the ring of a plane algebraic curve, see references in \cite{PRZ2019}. In this section we present the first explicit example of a centralizer isomorphic to the ring of a non-planar spectral curve. We explained the computational method used to obtain this example in \cite{HRZ2023}, based on Wilson's almost commuting bases, \cite{W1985}. We use  this example to illustrate the results of this paper.

\medskip


Let us consider the differential operator in $\bbC(x)[\partial]$, where $\partial=\frac{d}{dx}$
\begin{equation}
     L =\partial^3 -\frac{6}{x^{2}} \partial +\frac{12}{x^{3}}+1 . 
\end{equation}
The centralizer of $L$ equals $\cC(L)=\bbC[L,A_1,A_2]$ with
    \begin{align*}
    A_1=& \partial^4 -\frac{8}{x^2 }\partial^2 +\frac{24}{x^3 }\partial -\frac{24}{x^4},\\
    A_2=& \partial^5 -\frac{10}{x^2 }\partial^3 +\frac{40}{x^3 }\partial^2- \frac{80}{x^4 }\partial+\frac{80}{x^5 }.
    \end{align*}
The Burchall-Chaundy ideal of $L$ equals
\[\BC(L)=(f_1,f_2,f_3),\]
generated by the irreducible polynomials 
\begin{align*}
    &f_1(\lambda,\mu_1)=\dres(L-\lambda,A_1-\mu_1)=-\mu_1^3+(\lambda-1)^4,\\
    &f_2(\lambda,\mu_2)=\dres(L-\lambda,A_2-\mu_2)=-\mu_2^3+(\lambda-1)^5,\\ 
    &f_3(\mu_1,\mu_2)=\dres(A_1-\mu_1, A_2-\mu_2)=\mu_2^4-\mu_1^5.
\end{align*}
Since $\res_{\lambda}(f_1,f_2)=p f_3$ with $p= \mu_1^{10}+\mu_1^5\mu_2^4+\mu_2^8$ then $(f_1,f_2)$ is strictly contained in $(f_1,f_2,f_3)$.  A Gr\" obner basis of $\BC(L)$ in $\bbC[\lambda,\mu_1,\mu_2]$ w.r.t. the pure lexicographic order $\lambda>\mu_1>\mu_2$ is
\[\cF=\{f_3(\mu_1,\mu_2), F_1=g_2(\mu_1,\mu_2 )\lambda +g_1(\mu_1,\mu_2 ), \ldots ,F_5\}\]
with $g_1=- \mu_2^4 - \mu_1^2 \mu_2^3$ and $g_2=\mu_2^4$.

\medskip

The curve defined by $\BC(L)$ is a non-planar curve $\Gamma$ parametrized by 
\begin{equation}\label{def-param}
 \aleph (\tau)=(-\tau^3+1,\tau^4,-\tau^5), \tau\in\bbC.   
\end{equation} 
This is the first explicit example of a non-planar spectral curve. 

The first differential subresultants of $L-\lambda$, $A_1-\mu_1$ and $A_2-\mu_2$ pairwise are equal to 
    \[\phi_{i,0}+\phi_{i,1}\partial, i=1,2,3, j=0,1, \]
    see \eqref{eq-phiij}, with
    \begin{equation*}
    \begin{array}{ll}
         \phi_{1,0}=(1 -\lambda)\mu_1 - \frac{4\mu_1}{x^3} +\frac{8(\lambda-1)}{x^4},&   \phi_{1,1}=(\lambda-1)^2-\frac{2\mu_1}{x^2}+4\frac{(\lambda-1)}{x^3},\\
         \phi_{2,0}=(1 -\lambda)^3 - \frac{4(1-\lambda)^2}{x^2} +\frac{8\mu_2}{x^4}, & 
         \phi_{2,1}=(\lambda-1)^3-\frac{4(1-\lambda)^2}{x^2}+\frac{8\mu_2}{x^3},\\
         \phi_{3,0}=-\mu_2 \left(\mu_1^2  + \frac{4\mu_2}{x^3} -\frac{8\mu_1}{x^4}\right), & \phi_{3,1}=\mu_1^3 - \frac{2\mu_2^2}{ x^2} +\frac{ 4\mu_2\mu_1}{x^3}.
    \end{array}
    \end{equation*}
We have $o_1=4$ and $o_2=5$ thus  $\gcd(o_1, o_2)=1$ and
\[\phi=\overline{\phi}_i=\frac{\phi_{0,i}}{\phi_{1,i}}+[\BC(L)],\,\,\, i=1,2,3.\]
Let us call $\phi(\tau)$ to the result of replacing $(\lambda,\mu_1,\mu_2)$ by the parametrization $\aleph (\tau)$ of $\Gamma$ in $\phi$, which equals

 \begin{equation}
        \phi(\tau):=\phi_i(\aleph(\tau))=\frac{-\tau^3x^3 + 2\tau^2 x^2 - 4\tau x + 4}{(\tau^2 x^2 - 2\tau x + 2)x}.
    \end{equation}
Thus
$L-(\tau^3 +1)= \left( \partial^2+\phi(\tau)\partial+\phi(\tau)^2+2\phi(\tau)'-\frac{6}{x^2 } \right)\cdot (\partial+\phi(\tau))$ as in \eqref{eq-intrinsic-rfactor}.
{At every point $P_0=\aleph (\tau_0)$ of the spectral curve $\Gamma$ of $L$ we recover a right factor $\partial+\phi(\tau_0)$, for $\tau_0\neq 0$.}

    Observe that whenever the spectral curve $\Gamma$ is a rational curve with rational parametrization $ {\bf \aleph }(\tau )= (\aleph_1 (\tau),  \aleph_2 (\tau), \aleph_3 (\tau)  )$, as in the previous example, one can consider the  factorization of $ L-\aleph_1 (\tau)$ as a differential operator in $\Sigma(\tau)[\partial]$,  since the algebraic variable $\tau$ with $\partial(\tau)=0$, would be a free parameter over $\Sigma$. To achieve a full factorization, it is a future project to use the parametrized Picard-Vessiot theory introduced by Cassidy and Singer in \cite{CS}, and studied in \cite{MO2018}. It would be interesting to explore how this theory explains the factorization over the field of the spectral curve, using the results on second order operators in \cite{Arr} combined with the intrinsic factorization of Section \ref{sec-intrinsicRFactor}.

\bigskip



\medskip

\noindent{\Large\bf Acknowledgments}

\para 

A sincere acknowledgement to Rafael Hernandez Heredero  for the enlightening conversations on this topic, that contributed to a better understanding of the mechanisms behind integrability. The authors really appreciate the exchange of ideas regarding the non-planar spectral curves with Emma Previato, we are very grateful for the conversations we had on spectral problems and for her constant encouragement to pursuit this project.

The first author is a member of the Research Group ``Modelos ma\-tem\'aticos no lineales". The authors are partially supported by the grant PID2021-124473NB-I00, ``Algorithmic Differential Algebra and Integrability" (ADAI)  from the Spanish MICINN.

\appendixpage

\appendix

\addappheadtotoc

\section{Centralizers of differential operators}\label{app-Centralizers}

Assume in the most general case that $R$ is a commutative differential ring with no nonzero nilpotent elements whose ring of constants $\coC$ is a field of zero characteristic (not necessarily algebraically closed in this section). We will denote by $R [\partial]$ the ring of differential operators with coefficients in $R$. Any nonzero operator $L\in R [\partial ]$ can be uniquely written in the form
\begin{equation}\label{eq-L-general}
    L= u_0 +u_1 \partial + \cdots +u_n \partial^n , \textrm{with } u_i \in R \ \textrm{and } u_n \not=0 .
\end{equation}
We call $n$ the {\sf order of $L$}, denoted by $\ord (L)$, with the convention $\ord (0)=-\infty$. The element $u_n$ is called the {\sf leading coefficient of $L$}.  The order defines a function $\ord : R[\partial ] \rightarrow \bbN \cup \{ -\infty \}$ that satisfies the following properties
\begin{enumerate}
    \item  For nonzero $Q_1 , \dots , Q_\ell \in R[\partial ]$, the following inequality holds
    \begin{equation}\label{eq-ord-max}
        \ord (Q_1 +\cdots +Q_\ell )\leq \max \{ \ord (Q_i ) \ | \ 1\leq i \leq \ell \}.
    \end{equation}
    \item Consider nonzero $Q_1 , \dots , Q_\ell \in R[\partial ]$ with $\ord (Q_i ) \not= \ord (Q_j )$  whenever $i\not=j$. 
    \begin{equation}\label{eq-ord-null}
        \textrm{If } Q_1 +\cdots +Q_\ell =0 ,\quad  \textrm{then } Q_i =0 \quad  \textrm{for all  } i.
    \end{equation}
\end{enumerate}


We reformulate next results of Goodearl in \cite{Good} adapted to our context. 
The centralizer of $L$ in $R[\partial ]$ is the following (unital) subring of $R[\partial ]$ 
\begin{equation*}
    \cC(L)=\{A\in R [\partial]\mid LA=AL\}.
\end{equation*}

In previous notations and assumptions,  the following result follows (Lemma 1.1 in \cite{Good}).
\begin{lem}\label{lemma-ord}
Let $L$ be as in \eqref{eq-L-general} with $u_n$ a unit of $R$. Let $P$ and $Q$ be nonzero operators in $\cC(L)$ with orders $\ell$ and $m$ and leading coefficients $a_\ell$, $b_m$ respectively. Then
\begin{enumerate}
    \item There exists a nonzero constant $\alpha\in \coC$ such that $a_\ell^n =\alpha u_n^\ell$.
    \item \label{item-domain} We have $P\cdot Q \not= 0$, and also
    \begin{equation}\label{eq-ord}
        \ord (P\cdot Q) = \ord (P) +\ord (Q) .
    \end{equation}
    \item \label{item-cts}If $\ell=m$, there exists a nonzero constant $\alpha\in\coC$ such that $a_\ell =\alpha b_\ell$.
\end{enumerate}

\end{lem}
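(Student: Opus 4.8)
The plan is to derive all three statements from the vanishing of the subleading coefficient of the commutator $[L,P]=LP-PL$, which is precisely the commuting condition $P\in\cC(L)$. First I would write $P=\sum_{j=0}^{\ell}p_j\partial^j$ with $p_\ell=a_\ell$, and expand both $LP$ and $PL$ using the Leibniz rule $\partial^k a=\sum_{i\ge 0}\binom{k}{i}a^{(i)}\partial^{k-i}$. The coefficient of $\partial^{n+\ell}$ equals $u_n p_\ell$ in both products, so it cancels because $R$ is commutative. Collecting the monomials that contribute to $\partial^{n+\ell-1}$ gives coefficient $n\,u_n a_\ell'+u_n p_{\ell-1}+u_{n-1}a_\ell$ in $LP$ and $\ell\,a_\ell u_n'+a_\ell u_{n-1}+p_{\ell-1}u_n$ in $PL$; subtracting and using commutativity of $R$ once more to cancel the $p_{\ell-1}$ and $u_{n-1}$ terms, the relation $[L,P]=0$ forces
\[
n\,u_n\,a_\ell'=\ell\,a_\ell\,u_n'.
\]
(When $\ell=0$ there is no $p_{\ell-1}$ term and this reads $u_n a_0'=0$, still covered by what follows.)

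For part~1 I would set $\alpha:=a_\ell^{\,n}u_n^{-\ell}$, which makes sense since $u_n$ is a unit of $R$. Multiplying the displayed relation by $a_\ell^{\,n-1}u_n^{\ell-1}$ yields $(a_\ell^{\,n})'u_n^{\ell}=a_\ell^{\,n}(u_n^{\ell})'$, hence $\alpha'=0$ and $\alpha\in\coC$. Moreover $\alpha\neq 0$: otherwise $a_\ell^{\,n}=0$, and since $R$ has no nonzero nilpotent elements this forces $a_\ell=0$, contradicting $P\neq 0$. This establishes $a_\ell^{\,n}=\alpha u_n^{\ell}$ with $\alpha\in\coC\setminus\{0\}$; applying the same argument to $Q$ gives $b_m^{\,n}=\beta u_n^{m}$ with $\beta\in\coC\setminus\{0\}$.

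Parts~2 and~3 follow quickly. Since $\alpha\neq 0$ and $u_n$ is a unit, $a_\ell^{\,n}=\alpha u_n^{\ell}$ is a unit, so $a_\ell$ is a unit, and likewise $b_m$; hence $a_\ell b_m$ is a unit. Expanding $P\cdot Q$ with the Leibniz rule, the only contribution in order $\ell+m$ is $a_\ell b_m\,\partial^{\ell+m}$, whose coefficient is a unit and in particular nonzero, so $P\cdot Q\neq 0$ and $\ord(P\cdot Q)=\ell+m$. For part~3, assume $\ell=m$; then $w:=a_\ell b_\ell^{-1}$ is a unit with $w^{n}=\alpha\beta^{-1}\in\coC\setminus\{0\}$, so differentiating gives $n\,w^{n-1}w'=0$, and since $w^{n-1}$ is a unit and $n$ is invertible (characteristic zero) we conclude $w'=0$, i.e. $w\in\coC$. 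Thus $a_\ell=\gamma b_\ell$ with $\gamma:=w\in\coC\setminus\{0\}$.

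The main obstacle is purely the bookkeeping in the commutator expansion: identifying exactly which monomials of $LP$ and of $PL$ land in degree $\partial^{n+\ell-1}$ and verifying that everything cancels except the $u_n a_\ell'$ versus $a_\ell u_n'$ terms. Once that identity is secured, the remainder is elementary, using only that $u_n$ is a unit, that $R$ is reduced, and that $\coC$ has characteristic zero.
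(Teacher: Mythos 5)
Your argument is correct: the identity $n\,u_n a_\ell'=\ell\,a_\ell u_n'$ extracted from the $\partial^{n+\ell-1}$ coefficient of $[L,P]=0$, together with reducedness of $R$, invertibility of $u_n$, and characteristic zero, does yield all three parts. The paper itself gives no proof here (it cites Lemma~1.1 of Goodearl \cite{Good}), and your computation is essentially that standard argument.
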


By \ref{item-domain} in the previous lemma we obtain the following consequence.

\begin{prop}\label{coro-domain}
Let $R$ be a commutative differential ring with no nonzero  nilpotent elements, whose ring of constants $\coC$ is a field of zero characteristic. For any $L$ with invertible leading coefficient in $R$, then $\cC(L)$ does not have zero divisors.
\end{prop}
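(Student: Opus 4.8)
The plan is to obtain this statement as an immediate consequence of part \ref{item-domain} of Lemma \ref{lemma-ord}. Since $\cC(L)$ is a (unital) subring of $R[\partial]$, to prove it has no zero divisors it suffices to show that whenever $P$ and $Q$ are nonzero operators in $\cC(L)$, their product $P\cdot Q$ is again nonzero.

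First I would check that the hypotheses needed to apply Lemma \ref{lemma-ord} are in force: $R$ is a commutative differential ring with no nonzero nilpotent elements whose ring of constants $\coC$ is a field of characteristic zero, and $L$ has invertible leading coefficient in $R$ — which is exactly the condition there phrased as ``$u_n$ a unit of $R$''. Then, given nonzero $P,Q\in\cC(L)$ of orders $\ell$ and $m$, part \ref{item-domain} of Lemma \ref{lemma-ord} asserts precisely that $P\cdot Q\neq 0$ (and moreover $\ord(P\cdot Q)=\ell+m$). In particular $P\cdot Q\neq 0$, so $\cC(L)$ has no zero divisors.

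There is essentially no obstacle at this level: the proof is a one-line deduction once Lemma \ref{lemma-ord} is available. The substantive content sits entirely in that lemma, whose proof uses the absence of nilpotents together with characteristic zero to control how leading coefficients behave under composition — via the constant $\alpha\in\coC$ with $a_\ell^n=\alpha u_n^\ell$ — which prevents cancellation of the top-order term of $P\cdot Q$ and forces $\ord(P\cdot Q)=\ord(P)+\ord(Q)$.
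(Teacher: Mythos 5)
Your proposal is correct and matches the paper exactly: the paper also derives this proposition as an immediate consequence of part \ref{item-domain} of Lemma \ref{lemma-ord}, which asserts $P\cdot Q\neq 0$ for nonzero $P,Q\in\cC(L)$. Nothing further is needed.
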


\medskip

Observe that if $L\in\coC [\partial]$ then $\cC (L)=R[\partial]$. In addition the centralizer $\cC (L)$ always contains the ring
\[\coC[L]=\{p(L)\mid p(\lambda)\in \coC[\lambda]\}.\]

\begin{defi}\label{def-nontrivial}
    We will say that $L\in R[\partial]$ has a {\sf non trivial centralizer} if $\coC(L)$ does not equal $\coC[L]$ or $R[\partial]$, that is
\[\coC[L]\subset \cC(L) \subset R[\partial].\]
\end{defi}

From now on we assume that $L\in R[\partial]\backslash \coC[\partial]$ has a nontrivial centralizer $\cC (L)\neq \coC[L]$. We will also assume that 
 $u_n$ is a unit in $R$. We review next the proof of \cite{Good}, Theorem 1.2 which gives a constructive method to compute a basis of $\cC(L)$ as a free $\coC [L]$-module of finite rank.

\medskip

\noindent\textbf{Construction of a basis.} Due to its importance in this paper, we explain next the procedure given in \cite{Good}, Theorem 1.2. Let $L$ be as in \eqref{eq-L-general} with $u_n$ a unit in $R$. Define 
\begin{equation*}
    X:= \left\{
    i\in \{ 0, \dots , n-1 \} \ | \ \exists P\in \cC (L) , \ \textrm{with } \ord (P) \equiv i\ (\mod \ n )
    \right\}.
\end{equation*}
The set $\cO$ of orders of nonzero differential operators in $\cC(L)$ defines a subgroup $H_\cO$ of $\bbZ/n\bbZ$,
\begin{equation*}
 H_\cO =\left\{
 i+n\bbZ \ | \ i\in \cO
 \right\},
\end{equation*}
whose cardinality is $|H_\cO | =Card (X)$. Therefore, $Card (X)$ is a divisor of $n$.

\begin{lem}\label{lem-minBasis}
    With the previous notation. For each $i\in X$ choose $P_i \in \cC (L)$ of minimal order $n_i \equiv i\ (\mod \ n ) $ and $P_0 = 1$. Then the set
\begin{equation}\label{eq-beta}
    \cB :=
    \left\{
    P_i \ | \ i\in X
    \right\}
\end{equation}
is a  basis of the centralizer $\cC (L)$ as a $ \coC [L]-$module.
\end{lem}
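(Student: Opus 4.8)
The plan is to verify separately the two defining properties of a module basis: that $\cB$ generates $\cC(L)$ as a $\coC[L]$-module, and that $\cB$ is linearly independent over $\coC[L]$. Throughout I use that $\ord(p(L)) = n\deg(p)$ for nonzero $p\in\coC[\lambda]$ (the leading term $c_d L^d$ has leading coefficient a unit multiple of $c_d\neq 0$, so it does not cancel), together with the additivity of orders and non-vanishing of products from part~(\ref{item-domain}) of Lemma~\ref{lemma-ord}.

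\textbf{Spanning.} I would argue by induction on the order $m=\ord(A)$ of a nonzero $A\in\cC(L)$. If $m=0$, then $A$ is an order-zero operator commuting with $L$; comparing the coefficient of $\partial^{n-1}$ in $LA-AL$ shows $nu_nA'=0$, and since $u_n$ is a unit and $\mathrm{char}\,\coC=0$ this gives $A'=0$, so $A\in\coC=\coC[L]\cdot P_0$. For the inductive step, let $i\in X$ be the residue of $m$ modulo $n$; this residue does lie in $X$ since $A\in\cC(L)$ witnesses it, and by the minimality in the choice of $P_i$ we have $n_i:=\ord(P_i)\le m$. As $m\equiv n_i \pmod n$, write $m-n_i=kn$ with $k\in\bbN$. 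Then $L^kP_i\in\cC(L)$ has order $kn+n_i=m$ by repeated use of \eqref{eq-ord}. Now $A$ and $L^kP_i$ are two operators of $\cC(L)$ of the same order $m$, so part~(\ref{item-cts}) of Lemma~\ref{lemma-ord} gives a nonzero $\alpha\in\coC$ matching their leading coefficients; hence $A-\alpha L^kP_i\in\cC(L)$ has order strictly less than $m$. By the induction hypothesis (or trivially, if it is zero) this difference is a $\coC[L]$-combination of $\cB$, and therefore so is $A$.

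\textbf{Linear independence.} Suppose $\sum_{i\in X}p_i(L)P_i=0$ with $p_i\in\coC[\lambda]$. For each nonzero $p_i$, the summand $p_i(L)P_i$ is nonzero of order $n\deg(p_i)+n_i\equiv i\pmod n$. Since the residues $i$ indexing $X$ are pairwise distinct, the nonzero summands have pairwise distinct orders, so \eqref{eq-ord-null} forces every one of them to vanish: $p_i(L)P_i=0$ for all $i$. By Proposition~\ref{coro-domain} the ring $\cC(L)$ has no zero divisors and $P_i\neq 0$, hence $p_i(L)=0$; but $\ord(p_i(L))=n\deg(p_i)\ge 0$ when $p_i\neq 0$, so $p_i=0$. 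Thus all $p_i$ vanish, and combined with the spanning step every element of $\cC(L)$ has a unique expression as a $\coC[L]$-combination of $\cB$, i.e.\ $\cB$ is a basis.

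\textbf{Main obstacle.} The delicate point is the leading-term cancellation in the spanning step: one must manufacture, from the single chosen representative $P_i$ of each residue class mod $n$, a $\coC[L]$-multiple having exactly the order and leading coefficient of the target operator $A$. This is precisely where the rigidity of leading coefficients of centralizing operators encoded in Lemma~\ref{lemma-ord} (parts~(1) and~(\ref{item-cts})) is indispensable; absent it, there is no reason one representative per class should suffice. The remaining work is bookkeeping — that $n_i\le m$ by minimality, that $m-n_i$ is a nonnegative multiple of $n$, that the reduced operator stays in $\cC(L)$ so the induction is well founded, and that the order-zero case contains only constants.
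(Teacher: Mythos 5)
Your proof is correct and follows essentially the same route as the paper: linear independence via \eqref{eq-ord-null} plus the absence of zero divisors (Proposition \ref{coro-domain}), and spanning by induction on the order, using Lemma \ref{lemma-ord}(\ref{item-cts}) to cancel leading terms and the coefficient of $\partial^{n-1}$ in the commutator for the order-zero base case. Your handling of the exponent in the inductive step (writing $m-n_i=kn$ rather than taking the quotient of $m$ by $n$) is in fact slightly more careful than the paper's phrasing, since $\ord(P_{i})$ need not equal the residue $i$ itself.
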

\begin{proof}
    First observe that for a sum $\sum_{i\in X} C_i P_i $. $C_i\in \coC[L]$ to be zero, it is necessary that all summands are zero, because of \eqref{eq-ord-null}  applied to $Q_i = C_i P_i$. But then $C_i =0$, by Corollary \ref{coro-domain}. Consequently $\cB$ is a finite family of $\coC[L]$-linearly independent differential operators, and the centralizer $\cC (L)$ contains the free $\coC [L]$-submodule
\begin{equation*}
    W:= \bigoplus_{i\in X} \coC [L]P_i .
\end{equation*}
Let $Q$ be a differential operator in $ \cC(L)$ of order $m$. We will proceed by induction on $m$ to show that $Q$ is in $W$.

For $Q=u\in R$, the equality $LQ-QL=0$ provides a null differential operator whose  coefficient in $\partial^{n-1}$ is $ nu_n \partial(u)$. But, since $u_n$  is a unit in the zero characteristic ring $R$, we obtain $Q=u\in \coC$. Let us now consider an operator $Q$ of positive order $m>0$ and leading coefficient $c_m$. Let $i_m$ be the remainder of the division of $m$ by $n$, then $ m=qn+i_m$. The differential operator $T_Q :=L^q P_{i_{m}}$ is an operator of order $m$ in the centralizer of $L$. Consequently, by Lemma \ref{lemma-ord}, \ref{item-cts} there exists a nonzero constant $\alpha\in\coC$ such that $u_n^q p_{i_{m}} =\alpha c_m$, where $p_{i_{m}}$ is the leading coefficient of $P_{i_{m}}$. Thus, $Q-\alpha ^{-1} T_Q$ is an operator of order $<m$ in $\cC(L)$. According to the induction hypothesis, it is in $W$ and therefore also is $Q$. Consequently  $\cC (L)=W$.
\end{proof}

\begin{defi}\label{defi-minBasis}
    We call a basis of $\cC(L)$ as a $\coC[L]$-module as defined in Lemma \ref{lem-minBasis} a {\sf minimal order basis}.
\end{defi}

\begin{prop}\label{prop-minBasis}
Let $L\in \Sigma[\partial]\backslash \coC [\partial]$ be an operator  whose centralizer is non trivial. Every basis of $\cC(L)$ as a $\coC[L]$-module  is a  minimial order basis.
\end{prop}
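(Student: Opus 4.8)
The statement to prove is Proposition~\ref{prop-minBasis}: every basis of $\cC(L)$ as a $\coC[L]$-module is a minimal order basis in the sense of Definition~\ref{defi-minBasis}. The plan is to fix a basis $\{Q_i \mid i \in X'\}$ of $\cC(L)$ as a free $\coC[L]$-module and compare it with the canonical minimal order basis $\cB = \{P_i \mid i \in X\}$ constructed in Lemma~\ref{lem-minBasis}. The key invariant to exploit is the order function $\ord$ and, more precisely, the residue $\ord(A) \bmod n$ for $A \in \cC(L)$, together with the two structural facts already available: property \eqref{eq-ord-null} (operators of pairwise distinct orders that sum to zero must all vanish) and Corollary~\ref{coro-domain} ($\cC(L)$ has no zero divisors, so $\ord$ is additive by Lemma~\ref{lemma-ord}, \ref{item-domain}).

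First I would show that any $\coC[L]$-basis must have exactly $\mathrm{Card}(X)$ elements, one in each residue class of $X$ modulo $n$. Indeed, writing an arbitrary $A \in \cC(L)$ in the canonical basis $A = \sum_{i\in X} C_i(L) P_i$ and using that the orders $\ord(C_i(L)P_i)$ are pairwise incongruent modulo $n$ (since $\ord(L) = n$ kills the $C_i$ contribution modulo $n$ and the $\ord(P_i) \equiv i$ are distinct), property \eqref{eq-ord-null} forces $\ord(A)$ to equal the order of exactly one surviving summand; hence $\ord(A) \bmod n \in X$, and conversely each class in $X$ is realized. So the residue classes occupied by $\cC(L)\setminus\{0\}$ are precisely $X$. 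Now if $\{Q_j\}_{j\in J}$ is any $\coC[L]$-basis, a rank count (the module is free of rank $\mathrm{Card}(X)$ over the PID $\coC[L]$, or over $\coC[\lambda]$ after the isomorphism $\coC[L]\cong\coC[\lambda]$) gives $\mathrm{Card}(J) = \mathrm{Card}(X)$; and I would argue that the map $j \mapsto (\ord(Q_j) \bmod n)$ is injective into $X$: if two basis elements $Q_{j_1}, Q_{j_2}$ had the same residue $i$, then some $\coC[L]$-combination $L^{a}Q_{j_1} - \alpha L^{b}Q_{j_2}$ (matching orders and leading coefficients via Lemma~\ref{lemma-ord}, \ref{item-cts}) would be an element of strictly smaller order in the same class, eventually producing a nontrivial relation or an element not in the span, contradicting freeness. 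Since $\mathrm{Card}(J) = \mathrm{Card}(X)$, this injection is a bijection $J \to X$, so each residue class $i\in X$ contains exactly one basis element $Q_i$.

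Next I would prove that this unique $Q_i$ has \emph{minimal} order in its residue class among all nonzero elements of $\cC(L)$, which is exactly the defining property of a minimal order basis. Suppose not: let $R \in \cC(L)$ be nonzero of order $m' \equiv i \pmod n$ with $m' < \ord(Q_i) =: m_i$. Expand $R = \sum_{k\in X} C_k(L) Q_k$; by the residue-class argument above, $\ord(R) = \ord(C_i(L)Q_i)$ since that is the only summand in class $i$, but $\ord(C_i(L)Q_i) = n\deg C_i + m_i \ge m_i > m'$, a contradiction. Hence $Q_i$ has minimal order in class $i$, so $\{Q_i\}_{i\in X}$ is a minimal order basis. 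This also forces $\ord(Q_i) = n_i := \ord(P_i)$ for each $i$, reinforcing that minimal order bases all share the same order profile.

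The main obstacle I anticipate is the injectivity step for $j \mapsto \ord(Q_j) \bmod n$, i.e.\ ruling out two basis elements in the same residue class. The cleanest route is probably to avoid the reduction argument entirely and instead argue by the rank/spanning dimension directly: since $\cC(L)$ restricted to a fixed residue class $i\in X$ is, as a $\coC[L]$-module, free of rank exactly one (it is $\coC[L]$ times any minimal-order element $P_i$, by the division-with-remainder argument in the proof of Lemma~\ref{lem-minBasis}), a basis of the whole module cannot place two elements in one class without forcing a $\coC[L]$-linear dependence inside that rank-one piece. Making the "free of rank one per residue class" claim precise — essentially re-running the induction of Lemma~\ref{lem-minBasis} restricted to one congruence class — is the only place requiring real care; everything else is bookkeeping with \eqref{eq-ord-null} and additivity of $\ord$.
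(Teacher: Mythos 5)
There is a genuine gap, and it sits exactly where you flagged it: the claim that a basis can contain at most one element in each residue class of orders modulo $n$. That claim is false, and neither of your two routes to it can be repaired. Take $n=3$ and a minimal order basis $\{1,P_1,P_2\}$ with $o_i=\ord(P_i)\equiv i \pmod{3}$, and set $B:=P_2+L^kP_1$ with $k$ chosen so that $3k+o_1>o_2$. Then $\{1,P_1,B\}$ is again a basis of $\cC(L)$ as a $\coC[L]$-module (the change-of-basis matrix is unimodular over $\coC[L]$), yet $\ord(B)=3k+o_1\equiv 1\pmod{3}$: two basis elements lie in class $1$, none lies in class $2$, and $B$ does not have minimal order in its class. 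Your first argument fails because the reduction $B-\alpha L^{k}P_1$ does not "eventually produce a nontrivial relation'': it simply lands in a \emph{different} residue class (here it is a constant multiple of $P_2$), after which the descent stops with no contradiction. Your fallback fails because the set of elements of $\cC(L)$ of order $\equiv i\pmod{n}$ is not the rank-one submodule $\coC[L]P_i$; it is the much larger, non-module set of elements whose \emph{dominant} component lies in $\coC[L]P_i$ (e.g.\ $B$ above has order $\equiv 1$ but is not in $\coC[L]P_1$), so two basis elements in one residue class force no dependence inside a rank-one piece. Your second paragraph --- minimality of each $Q_i$ within its class, granted exactly one basis element per class --- is correct and is essentially the paper's argument, but it rests entirely on the broken first step.

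You should also know that the example above contradicts the Proposition as literally stated, not only your proof of it: the paper's own argument hinges on the assertion that a generating set must contain an element of each submodule $M_i=\coC[L]P_i$, and $\{1,P_1,P_2+L^kP_1\}$ refutes that assertion too. What survives, and what is actually used later in the proof of Theorem \ref{thm-NormalB}, is the restricted statement that any basis $\{1,B_1,B_2\}$ having exactly one element per residue class (i.e.\ $\ord(B_i)\equiv i\pmod{3}$) is automatically a minimal order basis. Your second paragraph proves precisely this; alternatively, writing $B_i=\sum_j c_{ij}(L)P_j$ and observing that the dominance conditions $3\deg c_{11}+o_1>3\deg c_{12}+o_2$ and $3\deg c_{22}+o_2>3\deg c_{21}+o_1$ force $\deg(c_{11}c_{22})>\deg(c_{12}c_{21})$, the requirement that $\det(c_{ij})$ be a nonzero constant gives $\deg c_{11}=\deg c_{22}=0$, hence $\ord(B_i)=o_i$. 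I would recommend proving that restricted version and adjusting the hypothesis accordingly.
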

\begin{proof}

Observe that given any basis $\cB$ of  $\cC (L)$ as in \eqref{eq-beta} then $\cC (L)$ is the direct sum of the $\coC [L]$-submodules $M_i =\coC [L]B_i$, for all $i\in X$.  In addition, note that the operators in $\coC [L,P_i]=\{q_0(L)+q_1(L)P_i\mid q_0,q_1\in \coC [\lambda]\}$ of order $o_i=\ord(P_i)$ are of the form $q_0(L)+ c P_i$ for some nonzero $c\in \coC$.

Thus a finite subset $\cS$ of $\cC(L)$ is linearly independent if each element belongs to a different subspace $M_i$. To be a generating set of $\cC(L)$ as a $\coC[L]$-module $\cS$ must contain an element in each $M_i$ and it must be of minimal order, since otherwise it will not generate $P_i$. \end{proof}

It was also proved in Goodearl's work \cite{Good}, Corollary 4.4 that $\cC (L)$ is a  commutative ring, as a consequence of being a subring of the centralizer of $L$ in the ring of pseudo-differential operators with coefficients in $R$, see Section \ref{sec-centralizers} of this work.
The next theorem summarizes the main features of the algebraic structure of $\cC (L)$.

\begin{thm}\label{thm-Appcen}
Let $R$ be a commutative differential ring with no  nonzero nilpotent elements whose ring of constants $\coC$ is a field of zero characteristic.  Given a nonzero differential operator $L$ in $R[\partial]$, whose leading coefficient is a unit in $R$, then
\begin{enumerate}
    \item the rank of $\cC(L)$ as a $\coC[L]$-module divides $\ord(L)$,

    \item $\cC (L)$ is a commutative integral domain
\end{enumerate}
\end{thm}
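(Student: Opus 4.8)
Both statements are essentially a repackaging of material already established in this appendix, so the plan is to assemble the relevant pieces rather than to prove anything substantially new.

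For part (1), I would invoke the construction of a basis: by Lemma \ref{lem-minBasis}, the family $\cB = \{P_i \mid i\in X\}$ is a basis of $\cC(L)$ as a free $\coC[L]$-module, so the rank of $\cC(L)$ over $\coC[L]$ equals $\mathrm{Card}(X) = |H_{\cO}|$, where $\cO$ is the set of orders of nonzero operators in $\cC(L)$ and $H_{\cO} = \{\, i + n\bbZ \mid i\in\cO\,\} \subseteq \bbZ/n\bbZ$ with $n = \ord(L)$. The one point worth spelling out is that $H_{\cO}$ is genuinely a subgroup: it is nonempty since $0\in\cO$ (as $1\in\cC(L)$), and it is closed under addition because, by \eqref{eq-ord} in Lemma \ref{lemma-ord}, the product of two nonzero operators of $\cC(L)$ of orders $\ell$ and $m$ is again a nonzero operator of $\cC(L)$ of order $\ell+m$; a nonempty subset of the finite group $\bbZ/n\bbZ$ closed under its operation is a subgroup. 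Lagrange's theorem then gives that $|H_{\cO}|$ divides $n$, which is the assertion.

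For part (2), commutativity is \cite{Good}, Corollary 4.4, recalled in Section \ref{sec-centralizers}: $\cC(L)$ is a subring of the centralizer of $L$ in the ring of pseudo-differential operators $R((\partial^{-1}))$, and the latter is commutative, being the ring of series in a fixed formal root of $L$. It then remains only to observe that $\cC(L)$ is an integral domain: it is a commutative ring containing $1$, it is nonzero, and by Proposition \ref{coro-domain} --- which applies because the leading coefficient of $L$ is a unit in $R$ and $R$ has no nonzero nilpotent elements --- it has no zero divisors. This gives (2).

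I do not expect a real obstacle here: the only step that is not a direct citation of an earlier result is the elementary upgrade of $H_{\cO}$ from a submonoid to a subgroup, which is immediate because $\bbZ/n\bbZ$ is finite. The theorem is best viewed as a summary of Lemma \ref{lem-minBasis}, Lemma \ref{lemma-ord}, Proposition \ref{coro-domain}, and Goodearl's commutativity result.
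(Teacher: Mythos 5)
Your proposal is correct and follows essentially the same route as the paper, which presents this theorem as a summary of Lemma \ref{lem-minBasis} (rank $=\mathrm{Card}(X)=|H_{\cO}|$, a divisor of $n$ by Lagrange), Proposition \ref{coro-domain} (no zero divisors), and Goodearl's Corollary 4.4 (commutativity via the pseudo-differential centralizer). Your explicit verification that $H_{\cO}$ is a subgroup — nonempty since $1\in\cC(L)$, closed under addition by \eqref{eq-ord}, hence a subgroup of the finite group $\bbZ/n\bbZ$ — is exactly the detail the paper leaves implicit.
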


It is an important fact that each operator $A$ in the centralizer $\cC(L)$, together with $L$ satisfy the algebraic equation defined by a polynomial $h_A (\lambda, \mu)\in\coC[\lambda ,\mu ]$. Using the argument given in \cite{Good}, Theorem 1.13, since $\cC(L)$ is a finitely generated $\coC[L]$-module and $\coC[L]$ is a commutative ring (see for instance \cite{AM}, Proposition 5.1), then there exist $a_i(\lambda)\in \coC [\lambda]$ such that 
\begin{equation*}
    h_A (L,A)=0 \mbox{ with } h_A (\lambda, \mu)=\mu^d + a_{d-1}(\lambda) \mu^{d-1}+\cdots + a_0(\lambda) .
\end{equation*}
The basis $\cB$ presented in \eqref{eq-beta} is not uniquely determined.  We will obtain in Theorem  \ref{thm-NormalB} a uniquely determined basis of $\cC(L)$ as a $\coC [L]$-module.

\section{Differential Resultants}\label{sec-DiffRes}

Let us consider ordinary  differential operators $P$ and $Q$ with coefficients in a differential domain $\bbD$, with derivation $\partial$. In order to study the common factors of $P$ and $Q$, we will  consider the fraction field $\bbK$ of $\bbD$, with the natural extension of the derivation, that we denote again by $\partial$. 

\medskip

The ring of differential operators $\bbK [\partial]$ is a (left and right) Euclidean domain, see \cite{BGV}, Corollary 4.35, and by \cite{BGV}, Corollary 4.36 it is a left and right principal ideal domain. In addition, by \cite{BGV}, Corollary 4.29, $\bbK[\partial]$ is a unique factorization domain, see \cite{BGV}, Definition 4.12.
Given differential operators $P,Q\in \bbK[\partial]$, we denote a greatest common right divisor 
of $P$ and $Q$ by $\gcrd (P,Q)$.  If $\gcrd (P,Q)\in\bbK$, we call $P$ and $Q$ {\sf right coprime}.

\medskip

Following \cite{McW}, we will review next how the existence of a non-trivial right factor is equivalent to the existence of a non-trivial order-bounded linear combination $C P + D Q = 0$.
This result justifies the definition of the differential resultant or Sylvester resultant of two differential operators given in \cite{Cha}. 

Let $\cM_{\ell}$ be the $\bbK$-vector space of differential operators in $\bbK [\partial]$ of order strictly less than $\ell$.
Assuming that $\ord(P)=n$ and $\ord(Q)=m$, let us consider the linear map
\begin{equation*}
    \s_0: \cM_{n} \oplus \cM_m\rightarrow \cM_{n+m}\mbox{ defined by } (C,D)\mapsto C P+D Q.
\end{equation*}
In the basis $\{\partial^{\ell},\ldots, \partial,1\}$ for each $\cM_{\ell}$, the matrix of $\s_0$ is the differential Sylvester matrix $\s_0(P,Q)$, a squared matrix of size $n+m$, with entries in $\bbD$, whose rows are the coefficient of the extended system
\begin{equation}\label{eq-E0}
    \Xi_0=\{\partial^{m-1} P,\ldots ,\partial P, P, \partial^{n-1} Q,\ldots ,\partial  Q, Q\}.
\end{equation}
The {\sf differential (Sylvester) resultant} of $P$ and $Q$ is the determinant 
\begin{equation}\label{eq-S0}
    \dres(P,Q):=\det (\s_0(P,Q)).
\end{equation}
Observe that the image of $\s_0$, denoted by $\Im (\s_0)$, is the $\bbK$-vector space spanned by $\Xi_0$. 

\begin{lem}\label{lem-linComb}
Given $P,Q$ in $\bbD [\partial]$ then
$\dres(P,Q)$ belongs to the image of $\s_0$, that is 
$$\dres(P,Q)=C_0 P+ D_0 Q\in \Im (\s_0)\cap \bbD,$$ 
with $\ord(C_0)=m-1$ and  $\ord(D_0)=n-1$.
\end{lem}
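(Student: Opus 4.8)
The plan is to carry over to the noncommutative ring $\bbK[\partial]$ the classical argument that the Sylvester resultant lies in the ideal generated by the two operators; the feature that makes this work is that every entry of the matrix $\s_0(P,Q)$ lies in the \emph{commutative} subring $\bbD$ of $\bbK[\partial]$, so that ordinary linear algebra over $\bbD$ (adjugates, Cramer's rule) is available. Concretely, I would put $\mathbf p=(\partial^{n+m-1},\dots,\partial,1)^{T}\in(\bbK[\partial])^{n+m}$ and let $\mathbf w=(\partial^{m-1}P,\dots,\partial P,P,\ \partial^{n-1}Q,\dots,\partial Q,Q)^{T}$ be the column vector whose entries are the operators of the extended system $\Xi_0$ of \eqref{eq-E0}. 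By the very definition of $\s_0(P,Q)$ — its $i$-th row records the coefficients of the $i$-th operator of $\Xi_0$ in the basis $\{\partial^{n+m-1},\dots,\partial,1\}$ — one has the identity $\s_0(P,Q)\,\mathbf p=\mathbf w$ in $(\bbK[\partial])^{n+m}$.

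Next I would multiply this identity on the left by the adjugate $\mathrm{adj}(\s_0(P,Q))$, whose entries again lie in $\bbD$. Since every matrix entry occurring in the computation lies in the commutative ring $\bbD$, associativity of the matrix products causes no trouble despite the noncommutativity of $\bbK[\partial]$, so from $\mathrm{adj}(\s_0(P,Q))\,\s_0(P,Q)=\det(\s_0(P,Q))\,I$ one gets $\det(\s_0(P,Q))\,\mathbf p=\mathrm{adj}(\s_0(P,Q))\,\mathbf w$. Reading off the last coordinate (the one multiplied by $\partial^{0}=1$) gives
\[
\dres(P,Q)=\det(\s_0(P,Q))=\sum_{k=1}^{m}\beta_{k}\,\partial^{m-k}P+\sum_{l=1}^{n}\gamma_{l}\,\partial^{n-l}Q ,
\]
where $\beta_1,\dots,\beta_m,\gamma_1,\dots,\gamma_n\in\bbD$ are the entries of the last row of $\mathrm{adj}(\s_0(P,Q))$. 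Setting $C_0:=\sum_{k=1}^{m}\beta_k\partial^{m-k}$ and $D_0:=\sum_{l=1}^{n}\gamma_l\partial^{n-l}$, both operators with coefficients in $\bbD$, yields $\dres(P,Q)=C_0P+D_0Q$ with $\ord(C_0)\le m-1$ and $\ord(D_0)\le n-1$; since $C_0P$ and $D_0Q$ are then $\bbD$-linear combinations of the elements of $\Xi_0$, their sum lies in $\Im(\s_0)$, and as $\det(\s_0(P,Q))\in\bbD$ by definition, we conclude $\dres(P,Q)\in\Im(\s_0)\cap\bbD$.

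The point I expect to require the most care is the assertion that the orders of $C_0$ and $D_0$ are \emph{exactly} $m-1$ and $n-1$, i.e. that the leading coefficients $\beta_1,\gamma_1$ do not vanish; these are, up to sign, the minors of $\s_0(P,Q)$ obtained by deleting the $\partial^{m-1}P$-row (resp. the $\partial^{n-1}Q$-row) together with the constant-coefficient column. I would establish their non-vanishing by expanding these minors along their top rows and tracking the (nonzero) leading coefficients of $P$ and $Q$ in the domain $\bbD$; in any case only the bounds $\ord(C_0)\le m-1$, $\ord(D_0)\le n-1$ are used elsewhere, so at worst one records those. An equivalent route that avoids adjugates is to replace the constant-coefficient column of $\s_0(P,Q)$ by $\mathbf w$, expand the resulting determinant along that column with the ($\bbD$-valued) cofactors written on the left — this produces $C_0P+D_0Q$ directly — and then observe that the column operations $\mathrm{col}_{n+m}\leftarrow\mathrm{col}_{n+m}-\mathrm{col}_{j}\,\partial^{n+m-j}$, $j=1,\dots,n+m-1$, turn that matrix back into $\s_0(P,Q)$ while leaving the determinant unchanged, each operation altering it only by a multiple of a determinant with two equal columns.
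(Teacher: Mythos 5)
Your proposal is correct and is essentially the paper's own argument: the paper right-multiplies $\s_0(P,Q)$ by the unimodular matrix $E$ with $\partial^{n+m-i}$ in the last column (which is exactly your column-operation route, replacing the last column by the operators of $\Xi_0$) and expands along that column, which is Cramer's-rule/adjugate reasoning in the same sense as your primary version. The only point where both you and the paper are slightly informal is the claim that the orders are exactly $m-1$ and $n-1$ rather than at most that, but as you note only the upper bounds are used.
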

\begin{proof}
Multiply the Sylvester matrix on the right by the squared matrix  $E$ of size $n+m$ whose $i$-th row contains $1$ in the $i$-th position and $\partial^{n+m-i}$ in the last position. Observe that $\det E=1$ and that $\s_0(P,Q) E$ has entries in $\bbD [\partial]$, in particular its last column contains the elements of $\Xi_0$.
Thus
\[\det(\s_0(P,Q))=\det (\s_0(P,Q) E)=C_0 P+ D_0 Q\]
after developing $\s_0(P,Q) E$ by its last column.
\end{proof}

Let us consider the left ideal generated by $P$ and $Q$ in $\bbK [\partial]$ and denote it by $(P,Q)=\bbK [\partial] P+\bbK [\partial] Q$.
Observe that $\Im (\s_0)\subset (P,Q)$ and, by the previous lemma if $\dres (P,Q)\neq 0$ then the elimination ideal $(P,Q)\cap \bbD$ is nonzero since 
\begin{equation*}
  \dres(P,Q)\in (P,Q)\cap \bbD.  
\end{equation*}

As announced, we prove next that the existence of an order bounded linear combination $CP+DQ=0$, in other words $\Ker(\s_0)\neq 0$, is equivalent to $\gcrd(P,Q)\notin \bbK$.

\begin{thm}\label{lem-elimIdeal}
    Let us consider $P,Q\in \bbD[\partial]$. The following statements are equivalent:
    \begin{enumerate}
    \item $\dres (P,Q)\neq 0$.
        \item $\Im (\s_0)\cap \bbD\neq 0$.
        \item $P$ and $Q$ are right coprime in $\bbK [\partial]$.
    \end{enumerate}
\end{thm}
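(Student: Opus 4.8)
\textbf{Proof plan for Theorem \ref{lem-elimIdeal}.} The plan is to prove the chain of equivalences $(1)\Leftrightarrow(2)\Leftrightarrow(3)$ by a mix of linear algebra over the field $\bbK$ and the Euclidean structure of $\bbK[\partial]$. The implication $(1)\Rightarrow(2)$ is essentially already done: by Lemma \ref{lem-linComb}, $\dres(P,Q)=C_0 P + D_0 Q\in\Im(\s_0)\cap\bbD$, so if this determinant is nonzero then $\Im(\s_0)\cap\bbD\neq 0$. For $(2)\Rightarrow(1)$ I would argue contrapositively: if $\dres(P,Q)=\det(\s_0(P,Q))=0$, then the rows of $\s_0(P,Q)$ — i.e.\ the operators in the extended system $\Xi_0$ of \eqref{eq-E0} — are $\bbK$-linearly dependent, so $\Im(\s_0)$ has $\bbK$-dimension strictly less than $n+m$; hence the surjection onto... no, rather, $\s_0\colon\cM_n\oplus\cM_m\to\cM_{n+m}$ fails to be injective, so $\Ker(\s_0)\neq 0$. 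Pick $0\neq(C,D)\in\Ker(\s_0)$, so $CP+DQ=0$ with $\ord(C)<m$, $\ord(D)<n$. Then I claim $\Im(\s_0)\cap\bbD=0$: any element of $\Im(\s_0)\cap\bbD$ would be a $\bbK$-combination of the $\Xi_0$, which span an at-most-$(n+m-1)$-dimensional space inside $\cM_{n+m}$, and one must check the degree-$0$ part alone cannot be hit without the relation forcing it to vanish. A cleaner route for $(2)\Rightarrow(1)$: if $r\in\Im(\s_0)\cap\bbD$ is nonzero, write $r=CP+DQ$ with $\ord(C)\le m-1$, $\ord(D)\le n-1$; reading off the coefficient vector expresses $r$ (a constant-order element) via $\s_0(P,Q)$ applied to the coefficient vector of $(C,D)$, and since the image of $\s_0$ restricted to... — here I would simply invoke that $\s_0$ is a square matrix, so $\Im(\s_0)$ meets $\bbD$ nontrivially iff $\s_0$ is surjective onto $\cM_{n+m}$ iff $\det(\s_0)\neq 0$. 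That gives $(1)\Leftrightarrow(2)$.

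For $(2)\Leftrightarrow(3)$ I would use the theory of differential operators over the Euclidean domain $\bbK[\partial]$, following \cite{McW}. The key fact to establish first: $\gcrd(P,Q)\notin\bbK$ if and only if there exist nonzero $C,D\in\bbK[\partial]$ with $CP=-DQ$ and $\ord(C)\le m-1$, $\ord(D)\le n-1$, i.e.\ $\Ker(\s_0)\neq 0$. In one direction, if $G:=\gcrd(P,Q)$ has order $k\ge 1$, write $P=\widetilde{P}G$ and $Q=\widetilde{Q}G$ with $\ord(\widetilde P)=n-k$, $\ord(\widetilde Q)=m-k$; then $\widetilde Q\cdot P=\widetilde Q\widetilde P G=\widetilde P\cdot Q$ only if $\widetilde Q\widetilde P=\widetilde P\widetilde Q$, which need not hold, so instead I should use the \emph{least common left multiple}: $\mathrm{lclm}(P,Q)$ has order $n+m-k<n+m$, so writing $\mathrm{lclm}(P,Q)=DQ=CP$ gives $\ord(D)=n-k\le n-1$ wait $\ord(C)=m-k\le m-1$, yielding a nonzero element of $\Ker(\s_0)$. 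Conversely, if $CP+DQ=0$ with $C,D$ nonzero and order-bounded as above, then $CP=-DQ$ is a common left multiple of order $\le n+m-1<n+m$; since $\deg(\mathrm{lclm}(P,Q))=n+m-\deg(\gcrd(P,Q))$, this forces $\gcrd(P,Q)$ to have order $\ge 1$, i.e.\ $P,Q$ not right coprime. This closes the loop: $(3)$ fails $\Leftrightarrow$ $\Ker(\s_0)\neq 0$ $\Leftrightarrow$ $\det\s_0=0$ $\Leftrightarrow$ $(1)$ fails, and $(1)\Leftrightarrow(2)$ was shown.

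I expect the main obstacle to be the careful bookkeeping of orders in the $\mathrm{lclm}$ argument — specifically verifying the order formula $\ord(\mathrm{lclm}(P,Q))=\ord(P)+\ord(Q)-\ord(\gcrd(P,Q))$ in the non-commutative Euclidean domain $\bbK[\partial]$, and making sure the cofactors in $\mathrm{lclm}(P,Q)=CP=DQ$ land in the prescribed order ranges $\ord(C)\le m-1$ and $\ord(D)\le n-1$ (rather than just $\le m$ and $\le n$). This is standard material in the theory of Ore operators, so I would cite \cite{BGV} or \cite{McW} for the Euclidean and divisibility facts and keep the degree count brief, the rest being the routine linear-algebra dictionary between the map $\s_0$, its square matrix $\s_0(P,Q)$, and the vanishing of $\dres(P,Q)=\det\s_0(P,Q)$.
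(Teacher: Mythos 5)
Your handling of $(1)\Leftrightarrow(3)$ is sound and takes a genuinely different route from the paper's: you identify $\Ker(\s_0)\neq 0$ with $\gcrd(P,Q)\notin\bbK$ via the least common left multiple and the order formula $\ord(\mathrm{lclm}(P,Q))=\ord(P)+\ord(Q)-\ord(\gcrd(P,Q))$, whereas the paper never invokes the lclm: for $(2)\Rightarrow(3)$ it uses that $\bbK[\partial]$ is a left principal ideal domain, so $(P,Q)=\bbK[\partial]F$ with $F=\gcrd(P,Q)$, and additivity of order then shows a nonzero element of $\bbD$ cannot be a left multiple of an $F$ of positive order; for $(3)\Rightarrow(1)$ it argues (tersely) via surjectivity of $\s_0$. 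Your lclm bookkeeping of the cofactor orders ($m-k\le m-1$, $n-k\le n-1$) is correct, so that part stands, at the cost of citing the lclm degree formula.

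The genuine gap is in how you return from statement $(2)$. Your ``cleaner route'' asserts that, $\s_0(P,Q)$ being square, $\Im(\s_0)$ meets $\bbD$ nontrivially if and only if $\s_0$ is surjective if and only if $\det\s_0\neq 0$. The second equivalence is fine, and surjectivity certainly puts a nonzero constant in the image; but the first implication in the stated direction is not linear algebra: a proper subspace of $\cM_{n+m}$ can perfectly well contain nonzero elements of $\bbD$ (the span of $\{1,\partial^2\}$ inside $\cM_3$ already does), so $\Im(\s_0)\cap\bbD\neq 0$ does not force $\s_0$ onto. Your earlier dimension-count attempt hits the same wall, and you half-acknowledge it (``one must check the degree-$0$ part alone cannot be hit\dots''). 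Since the only correct implications you establish are $(1)\Rightarrow(2)$ and $(1)\Leftrightarrow(3)$, nothing in the proposal brings $(2)$ back to $(1)$ or $(3)$, and the three statements are not yet proved equivalent. The repair is algebraic rather than linear-algebraic and is precisely the paper's $(2)\Rightarrow(3)$: a nonzero $e\in\Im(\s_0)\cap\bbD$ lies in the left ideal $(P,Q)=\bbK[\partial]\,\gcrd(P,Q)$, and if $\gcrd(P,Q)$ had positive order then, after clearing denominators, $e$ would be a nonzero left multiple of an operator of positive order, hence itself of positive order --- a contradiction. With that substitution your argument closes.
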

\begin{proof}
By Lemma \ref{lem-linComb}, 1 implies 2. 
To prove that 2 implies 3, let us assume that $\gcrd (P,Q)=F$ is a differential operator in $\bbK [\partial]$ of order greater than zero. Since $\bbK [\partial]$ is a left principal ideal domain then by \cite{BGV}, Proposition 4.16, (2) we have $\bbK [\partial] P+\bbK [\partial] Q=\bbK [\partial] F$. Given $e\in \Im(\s_0)\cap \bbD$  then $e\in \bbK [\partial] F$ and there exists $0\neq d\in\bbD$ such that $d e =L F^*$, with $L, F^*\in \bbD [\partial]$ and $F^*\notin \bbD$. Therefore $e=0$. 

If $\dres(P,Q)=\det (\s_0(P,Q))=0$ then $\s_0$ is not surjective so $\gcrd(P,Q)\notin \bbK$, proving that 3 implies 1.

\end{proof}

\subsection{First differential subresultant}\label{app-subresultant}

The differential resultant of two differential operators is a condition on their coefficients that guaranties the existence of a right common factor. We introduce next the first differential subresultant, as a tool to compute such factor in case it is a first order factor. Differential subresultants were introduced in \cite{Cha}, see also \cite{Li}.

Let us consider the $\bbK$-linear map
\begin{equation}
    \s_1: \cM_{n-1} \oplus \cM_{m-1}\rightarrow \cM_{n+m-1}\mbox{ defined by } (C,D)\mapsto C P+D Q.
\end{equation}
In the basis 
$\{\partial^{\ell},\ldots, \partial,1\}$ for each $\cM_{\ell}$, the matrix of $\s_1$ is a matrix $\s_1(P,Q)$ with $n+m-2$ rows and $n+m-1$ columns, with entries in $\bbD$, whose rows are the coefficient of the extended system
\begin{equation}\label{eq-E1}
    \Xi_1=\{\partial^{m-2} P,\ldots ,\partial P, P, \partial^{n-2} Q,\ldots ,\partial  Q, Q\}.
\end{equation}
Observe that  $\Im (\s_1)$ is the $\bbK$-vector space spanned by $\Xi_1$ and that $\Im (\s_1)\subset (P,Q)$.
Let us consider the squared matrix $\hat{\s}_1(P,Q)$ obtained by adding $\s_1(P,Q) $ the first row $(0, \cdots , 0, 1, -\partial)$.
The {\sf first differential subresultant} of $P$ and $Q$ is the determinant 
\begin{equation}\label{eq-Sij}
\sdres_1(P,Q):=\det \hat{\s}_1(P,Q)=\det(S_1^0)+\det(S_1^1)\partial , 
\end{equation}
where $S_1^0$ and $S_1^1$
are the submatrices of $\s_1 (P,Q)$ obtained by removing the columns indexed by $\partial$ and $1$ respectively.

\begin{lem}\label{lem-linCombS1}
Given $P,Q$ in $\bbD [\partial]$ then
$\dres(P,Q)$ belongs to the image of $\s_1$, that is 
$$\sdres_1(P,Q)=C_1 P+ D_1 Q\in \Im (\s_1),$$ 
with $\ord(C_1)=m-2$ and  $\ord(D_1)=n-2$.
\end{lem}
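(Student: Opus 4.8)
The plan is to follow the proof of Lemma~\ref{lem-linComb} verbatim, now for the square matrix $\hat{\s}_1(P,Q)$ of size $n+m-1$. Recall that $\hat{\s}_1(P,Q)$ has first row $(0,\dots,0,1,-\partial)$ and that its remaining $n+m-2$ rows are the coefficient rows, in the basis $\{\partial^{n+m-2},\dots,\partial,1\}$, of the operators in $\Xi_1=\{\partial^{m-2}P,\dots,P,\partial^{n-2}Q,\dots,Q\}$. First I would introduce the matrix $E_1$ of size $n+m-1$ obtained from the identity by replacing its last column by $(\partial^{n+m-2},\dots,\partial,1)^{\mathsf T}$; this matrix is upper triangular with $1$'s on the diagonal, so $\det E_1=1$, and right multiplication by it modifies only the last column.

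Next I would spell out the effect of $E_1$. In the row of $\partial^{i}P$ the new last entry is $\sum_{\ell}c_\ell\partial^{\ell}=\partial^{i}P$, where $c_\ell\in\bbD$ is the coefficient of $\partial^{\ell}$ in $\partial^{i}P$ and every monomial is captured by the basis since $\ord(\partial^{i}P)\le n+m-2$; similarly for the rows of $\partial^{j}Q$. In the first row the new last entry is $1\cdot\partial+(-\partial)\cdot 1=0$, so after the transformation the first row reads $(0,\dots,0,1,0)$, with the $1$ in the $\partial$-column. All other columns of $\hat{\s}_1(P,Q)\,E_1$ are untouched, hence have entries in $\bbD$.

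Then, since $\det\bigl(\hat{\s}_1(P,Q)\,E_1\bigr)=\det\hat{\s}_1(P,Q)=\sdres_1(P,Q)$, I would expand this determinant first along the new first row — leaving, up to sign, the minor $M$ obtained by deleting that row and the $\partial$-column — and then expand $\det M$ along its last column, the only one carrying operator entries. Grouping the $P$-rows and the $Q$-rows, and placing the (scalar, $\bbD$-valued) cofactors to the left of the operator entries $\partial^{i}P$, $\partial^{j}Q$, yields $\sdres_1(P,Q)=C_1P+D_1Q$ with $C_1=\sum_{i=0}^{m-2}c_i\partial^{i}\in\cM_{m-1}$ and $D_1=\sum_{j=0}^{n-2}d_j\partial^{j}\in\cM_{n-1}$, $c_i,d_j\in\bbD$; being a $\bbK$-linear combination of the elements of $\Xi_1$, it lies in $\Im(\s_1)$, as claimed.

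The step that needs care — and the one I expect to be the main obstacle — is the noncommutative bookkeeping. Since $\partial$ does not commute with $\bbD$, in the last-column cofactor expansion the scalar minors must be kept on the left of the operator entries, and one must track the cofactor signs so that the resulting identity is consistent with the sign convention implicit in $\sdres_1(P,Q)=\det(S_1^0)+\det(S_1^1)\partial$, absorbing an overall sign into $C_1$ and $D_1$ if necessary; for the same reason one should check that the column operation encoded by $E_1$ genuinely leaves the determinant unchanged in this noncommutative setting, just as in the proof of Lemma~\ref{lem-linComb}. Everything else is formal.
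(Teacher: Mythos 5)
Your proof is correct and follows essentially the same route as the paper: right-multiplication of $\hat{\s}_1(P,Q)$ by the unit-determinant matrix whose last column is $(\partial^{n+m-2},\dots,\partial,1)^{\mathsf T}$, so that the last column of the product becomes $0$ followed by the elements of $\Xi_1$, and then expansion of the determinant along the last column(s) with the $\bbD$-valued cofactors kept on the left. The noncommutative bookkeeping you flag is exactly the point the paper also relies on (silently) in Lemma \ref{lem-linComb}, and your treatment of it is sound.
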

\begin{proof}
Multiply $\hat{\s}_1(P,Q)$ on the right by the squared matrix  $E$ of size $n+m-1$ whose $i$-th row contains $1$ in the $i$-th position and $\partial^{n+m-1-i}$ in the last position. Observe that $\det E=1$ and that $\hat{\s}_1(P,Q) E$ has entries in $\bbD [\partial]$, in particular its last column contains $0$ followed by the elements of $\Xi_1$.
Thus
\[\det(\s_1(P,Q))=\det (\hat{\s}_1(P,Q) E)=C_1 P+ D_1 Q\]
after developing $\s_1(P,Q) E$ by its last two columns.
\end{proof}

\begin{thm}[First Subresultant Theorem]\label{thm-FST}
    Let us consider $P,Q\in \bbD[\partial]$. If $\dres(P,Q)= 0$ and $\sdres_1(P,Q)\neq 0$ then
    \begin{enumerate}
        \item $\sdres_1(P,Q)$ is a differential operator of order $1$,
        \item $\gcrd(P,Q)$ equals $\sdres_1(P,Q)$ up to multiplication by an element of $\bbK$.
    \end{enumerate}
\end{thm}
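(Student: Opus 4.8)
The plan is to combine two facts already in hand. By Theorem \ref{lem-elimIdeal}, the hypothesis $\dres(P,Q)=0$ is equivalent to $P$ and $Q$ not being right coprime in $\bbK[\partial]$, so that $F:=\gcrd(P,Q)$ has $\ord(F)\geq 1$. On the other hand, by Lemma \ref{lem-linCombS1}, $\sdres_1(P,Q)=C_1P+D_1Q$ lies in $\Im(\s_1)\subseteq (P,Q)$, the left ideal generated by $P$ and $Q$. Since $\bbK[\partial]$ is a left principal ideal domain, \cite{BGV}, Proposition 4.16 gives $(P,Q)=\bbK[\partial]P+\bbK[\partial]Q=\bbK[\partial]F$, so every element of $(P,Q)$ is right-divisible by $F$; in particular $F$ is a right divisor of $\sdres_1(P,Q)$.

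From here I would compare orders. Write $\sdres_1(P,Q)=G\cdot F$ with $G\in\bbK[\partial]$. By hypothesis $\sdres_1(P,Q)\neq 0$, hence $G\neq 0$, and since $\bbK$ is a field the order is additive under composition, giving $\ord(\sdres_1(P,Q))=\ord(G)+\ord(F)\geq \ord(F)\geq 1$. But the defining formula \eqref{eq-Sij} exhibits $\sdres_1(P,Q)=\det(S_1^0)+\det(S_1^1)\partial$ as an operator of order at most $1$. Therefore $\ord(\sdres_1(P,Q))=1$, which is statement (1), and consequently $\ord(F)=1$ and $\ord(G)=0$, i.e.\ $G=c$ for some nonzero $c\in\bbK$. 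Thus $\sdres_1(P,Q)=c\cdot F=c\cdot\gcrd(P,Q)$, which is statement (2).

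I do not expect a genuine obstacle here: the argument is essentially a divisibility statement in the principal ideal domain $\bbK[\partial]$ followed by an order count. The only points requiring a little care are (i) that $\Im(\s_1)$ is really contained in the left ideal $(P,Q)$, which is immediate from the construction of $\Xi_1$, and (ii) that the upper bound $\ord(\sdres_1(P,Q))\leq 1$ comes for free from \eqref{eq-Sij}, so that the order count is forced in one direction while right-divisibility by $F$ supplies the other. One should also double-check the degenerate cases where $n$ or $m$ is small, but these are harmless since the hypotheses $\dres(P,Q)=0$ and $\sdres_1(P,Q)\neq 0$ already exclude the trivial configurations.
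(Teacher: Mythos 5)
Your proof is correct and follows essentially the same route as the paper's: both rest on Lemma \ref{lem-linCombS1} to place $\sdres_1(P,Q)$ in the left ideal $(P,Q)=\bbK[\partial]\cdot\gcrd(P,Q)$ and then conclude by an order count. The only organizational difference is in part (1): the paper rules out order zero by observing that $\sdres_1(P,Q)=\det(S_1^0)$ would give a nonzero element of $\Im(\s_0)\cap\bbD$, contradicting $\dres(P,Q)=0$ via Theorem \ref{lem-elimIdeal}, whereas you get $\ord(\sdres_1(P,Q))\geq 1$ directly from right-divisibility by the gcrd (which has positive order for the same reason); both are valid, and your version has the merit of making explicit the order count that the paper leaves implicit in its one-line justification of part (2).
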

\begin{proof}
    If $\sdres_1(P,Q)$ has order zero then it equals $\det(S_1^0)$. Therefore $\Im(\s_0)\cap \bbD\neq 0$ contradicting, by Theorem \ref{lem-elimIdeal}, that $\dres(P,Q)=0$.

    By Lemma \ref{lem-linCombS1}, we have $\sdres_1(P,Q)\in \Im (\s_1)\subset (P,Q)$ then $\sdres_1(P,Q)$ is a multiple of $\gcrd (P,Q)$, which proves 2. 
\end{proof}

\subsection{Characterizing common solutions}\label{sec-commonsol}

Given a differential operator $L\in \Sigma [\partial]$, a Picard–Vessiot extension $K$ of $\Sigma$ for $L$ is a differential field extension
$K = \Sigma \langle \psi_1,\ldots ,\psi_n \rangle$, where $\{\psi_1,\ldots ,\psi_n\}$ is a fundamental set of solutions, with no new
constants in $K$. It is the equivalent of a splitting field for $L(y) = 0$. Under the assumption that the field of constants $\coC$ is algebraically closed, the classical Picard-Vessiot theory guaranties the existence and uniqueness (up to differential automorphisms) of the Picard-Vessiot field for the equation $L(y) = 0$. See \cite{VPS}, Proposition 1.22.

Given differential operators $P$ and $Q$ in $\Sigma[\partial]$, let us consider the Picard-Vessiot extensions $(\cE_P,\partial_P)$ and $(\cE_Q,\partial_Q)$ of $\Sigma$ for the equations $P(y)=0$ and $Q(y)=0$ respectively, whose field of constants is $\coC$. 
As a consequence $\Sigma[\partial]$ can be included in $\cE_P[\partial_P]$ and $Q$ can be naturally extended to act on $\cE_P$. Analogously $P$ can be naturally extended to act on $\cE_Q$.

We give next a proof of Poisson's formula in \cite{Cha}, Theorem 5 using the strategy of E. Previato in \cite{Prev}. Given a fundamental system of solutions $\psi_1,\ldots ,\psi_n$ of $P(y)=0$ in $(\cE_P,\partial_P)$, we denote its  wronskian matrix by 
\begin{equation*}
    W(\psi_i)=\begin{pmatrix} 
    \psi_1 &\cdots &\psi_n\\
     \partial_P(\psi_1) &\cdots &\partial_P(\psi_n)\\
\vdots &\ddots &\vdots\\
      \partial^{n-1}_P(\psi_1) &\cdots &\partial^{n-1}_P(\psi_n)\\
    \end{pmatrix}.
\end{equation*}
{Note that the natural extension of $Q$ to $(\cE_P,\partial_P)$ allows to consider its action on solutions of $P(y)=0$ and to write $Q(\psi_i)$. Analogously, the action of $P$ on solutions of $Q(y)=0$ is naturally defined.}

\begin{thm}[Poisson's Formula]\label{thm-Poisson}
Let us consider differential operators $P$ and $Q$ in $\Sigma [\partial]$ of orders $n$ and $m$, and leading coefficients $a_n$ and $b_m$ respectively. Given fundamental systems of solutions $\psi_1,\ldots ,\psi_n$ of $P(y)=0$ in $\cE_P$ and $\phi_1,\ldots ,\phi_m$ of $Q(y)=0$ in $\cE_Q$ then
\begin{equation}\label{eq-Poisson}
    \dres(P,Q)=a_n^m \frac{\det W(Q(\psi_i))}{\det W(\psi_i)}
    =(-1)^{mn} b_m^n \frac{\det W(P(\phi_i))}{\det W(\phi_i)}.
\end{equation}
\end{thm}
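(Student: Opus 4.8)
The plan is to write $\dres(P,Q)=\det\s_0(P,Q)$ and to right‑multiply $\s_0(P,Q)$ by a single Wronskian‑type matrix built from the \emph{combined} fundamental system $\psi_1,\dots,\psi_n,\phi_1,\dots,\phi_m$, exactly as in Previato's argument. First I would pass to a differential field $\cE$ containing $\cE_P$ and $\cE_Q$ with the same field of constants $\coC$ (e.g.\ a Picard--Vessiot extension of $\cE_P$ for $Q$), so that $P$, $Q$ and all the $\psi_i,\phi_j$ act on one field. For $\xi\in\cE$ put $v(\xi)=(\partial^{n+m-1}\xi,\dots,\partial\xi,\xi)^{\top}$. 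By the very construction of $\s_0(P,Q)$ as the matrix of $(C,D)\mapsto CP+DQ$ in the monomial basis $\{\partial^{\ell},\dots,\partial,1\}$, one checks that $\s_0(P,Q)\,v(\xi)=\bigl(\partial^{m-1}P(\xi),\dots,P(\xi),\partial^{n-1}Q(\xi),\dots,Q(\xi)\bigr)^{\top}$.

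I would then form the $(n+m)\times(n+m)$ matrix $V$ with columns $v(\psi_1),\dots,v(\psi_n),v(\phi_1),\dots,v(\phi_m)$; up to reversing its row order (the only effect of the $\partial^{n+m-1}$‑first convention in $\s_0$) this is the Wronskian matrix of the combined system, so $\det V=\pm\det W(\psi_1,\dots,\psi_n,\phi_1,\dots,\phi_m)$. Since $P(\psi_i)=0$ and $Q(\phi_j)=0$ (hence all their derivatives vanish), the product is anti‑block‑triangular,
\[
\s_0(P,Q)\cdot V=\begin{pmatrix}0&B\\ C&0\end{pmatrix},
\]
where $B$ is a row‑reversal of the $m\times m$ Wronskian matrix $W(P(\phi_i))$ and $C$ is a row‑reversal of the $n\times n$ Wronskian matrix $W(Q(\psi_i))$. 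Taking determinants and using $\det\begin{pmatrix}0&B\\ C&0\end{pmatrix}=(-1)^{mn}\det B\,\det C$ gives
\[
\dres(P,Q)\cdot\det W(\psi_1,\dots,\psi_n,\phi_1,\dots,\phi_m)=\pm\,\det W(Q(\psi_i))\cdot\det W(P(\phi_i)).
\]

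The second ingredient is the classical factorization of the combined Wronskian. Writing $\widehat P=a_n^{-1}P$ for the monic operator with kernel $\langle\psi_1,\dots,\psi_n\rangle$, each $\partial^{n+j}\psi_i$ is the \emph{same} $\widehat P$‑determined linear combination of $\psi_i,\partial\psi_i,\dots,\partial^{n-1}\psi_i$ for every $i$; using these identities as row operations on $W(\psi_1,\dots,\psi_n,\phi_1,\dots,\phi_m)$ clears its lower‑left $n$ columns and, after a further triangular set of row operations inside the remaining block, yields a block‑upper‑triangular matrix with diagonal blocks $W(\psi_i)$ and $W(\widehat P(\phi_1),\dots,\widehat P(\phi_m))$. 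Combined with the elementary identity $W(fh_1,\dots,fh_m)=f^{m}W(h_1,\dots,h_m)$ (factor out the lower‑triangular Leibniz matrix carrying $f$ on its diagonal), applied with $f=a_n$, this gives $\det W(\psi_1,\dots,\psi_n,\phi_1,\dots,\phi_m)=a_n^{-m}\det W(\psi_i)\,\det W(P(\phi_i))$.

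Substituting this into the displayed identity and cancelling $\det W(P(\phi_i))$ produces $\dres(P,Q)=\pm\,a_n^{m}\det W(Q(\psi_i))/\det W(\psi_i)$; the cancellation is legitimate because $\det W(P(\phi_i))\neq0$ precisely when $P$ and $Q$ are right coprime, and when they are not both $\dres(P,Q)$ and $\det W(Q(\psi_i))$ vanish, so the asserted identity holds trivially. The second equality then follows by running the same argument with $P$ and $Q$ interchanged, together with $\dres(P,Q)=(-1)^{nm}\dres(Q,P)$. I expect the genuinely delicate point to be the sign bookkeeping: the row reversal forced by the column order of $\s_0$, the anti‑block sign $(-1)^{mn}$, and the reorderings hidden in the row operations must be combined carefully to confirm that they collapse to $+1$ in the first formula and to $(-1)^{mn}$ in the second, and a single low‑order check such as $n=m=1$ fixes the overall normalization.
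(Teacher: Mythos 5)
Your proof is correct, but it takes a genuinely different route from the paper's. The paper never combines the two fundamental systems: it splits the Sylvester matrix into blocks $\left(\begin{smallmatrix} M_1 & M_2\\ M_3 & M_4\end{smallmatrix}\right)$ with $M_1$ upper triangular carrying $a_n$ on its diagonal, extracts $\dres(P,Q)=a_n^m\det(M_4-M_3M_1^{-1}M_2)$ by a Schur--complement manipulation, and then reads off $W(Q(\psi_i))=(M_4-M_3M_1^{-1}M_2)\,W(\psi_i)$ directly from the action of the extended system $\Xi_0$ on the $\psi_i$ alone; taking determinants gives the first equality, and the second follows by symmetry inside $\cE_Q$. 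This keeps everything in $\cE_P$, produces the factor $a_n^m$ for free from $\det M_1$, involves no sign bookkeeping, and needs no separate treatment of the non-coprime case. Your argument instead right-multiplies $\s_0(P,Q)$ by the Wronskian matrix of the combined system, which forces you to (i) construct a common no-new-constants extension containing both solution sets, (ii) invoke the classical factorization $\det W(\psi_1,\dots,\psi_n,\phi_1,\dots,\phi_m)=a_n^{-m}\det W(\psi_i)\det W(P(\phi_j))$, (iii) track the reversal and anti-block signs (they do collapse as you claim, via $\binom{n+m}{2}=\binom{n}{2}+\binom{m}{2}+nm$), and (iv) dispose separately of the degenerate case $\det W(P(\phi_j))=0$, which you do correctly since a common solution kills both $\dres(P,Q)$ and $\det W(Q(\psi_i))$. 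What your route buys is that both expressions in \eqref{eq-Poisson} and the sign $(-1)^{mn}$ relating them appear simultaneously and symmetrically from the anti-block-triangular determinant; the paper's route is shorter and avoids all of the auxiliary machinery.
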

\begin{proof}
Let us consider the following decomposition of the Sylvester matrix 
\begin{equation*}
    S_0(P,Q)=\left(\begin{array}{cc}
    M_1 & M_2\\
    M_3 & M_4 
    \end{array}\right)
\end{equation*}
where $M_1$ is an upper triangular $m\times m$ matrix with $a_n$ in the main diagonal. By the following matrix manipulation, where $I_n$ is the identity matrix of size $n$ and $\0$ is the zero matrix of the appropriate size
\begin{equation*}
    \left(\begin{array}{cc}
    M_1 & M_2\\
    M_3 & M_4 
    \end{array}\right) 
    \left(\begin{array}{cc}
    M_1^{-1} & -M_1^{-1} M_2\\
    \0 & I_n 
    \end{array}\right) = 
    \left(\begin{array}{cc}
    I_m & \0 \\
    M_3 M_1^{-1} & M_4-M_3 M_1^{-1} M_2
    \end{array}\right)
\end{equation*}
we obtain that 
\begin{equation}
    \dres(P,Q)=a_n^m\det(M_4-M_3 M_1^{-1} M_2).
\end{equation}
From the action of the system $\Xi_0$ on the fundamental system of solutions $\{\psi_i\}$ we obtain
\begin{equation}\label{eq-WQ}
    W(Q(\psi_i))=(M_4-M_3 M_1^{-1} M_2) W(\psi_i).
\end{equation}
Taking determinants in \eqref{eq-WQ}, the first part of formula \eqref{eq-Poisson} follows. 
\end{proof}

We are ready to review the Resultant Theorem. 

\begin{thm}[Resultant Theorem]\label{thm-DR}
Let us consider differential operators $P$ and $Q$ in $\Sigma [\partial]$. Let $\cE$ be a Picard-Vessiot extension of $\Sigma$ for $P(y)=0$ (or $Q(y)=0$).
Then the system 
\begin{equation}\label{eq-systemPQ}
    P(y)=0 \, , \, Q(y)=0
\end{equation}
has a nontrivial solution in $\cE$ if and only if $\dres(P,Q)=0$. 
\end{thm}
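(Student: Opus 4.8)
The plan is to derive the theorem directly from Poisson's formula (Theorem~\ref{thm-Poisson}) together with the classical Wronskian criterion for linear dependence over the field of constants. First I would take $\cE$ to be a Picard--Vessiot extension of $\Sigma$ for $P(y)=0$ and fix a fundamental system $\psi_1,\ldots,\psi_n$ of solutions of $P(y)=0$ in $\cE$, where $n=\ord(P)$; since a Picard--Vessiot extension introduces no new constants, the solution space of $P(y)=0$ in $\cE$ is exactly the $n$-dimensional $\coC$-span of the $\psi_i$. Poisson's formula gives
\[\dres(P,Q)=a_n^{m}\,\frac{\det W(Q(\psi_i))}{\det W(\psi_i)},\]
and since $a_n\neq 0$ and $\det W(\psi_i)\neq 0$ (the Wronskian of a fundamental system is nonzero), this immediately reduces the statement to the equivalence: $\det W(Q(\psi_i))=0$ if and only if the system $P(y)=0,\,Q(y)=0$ has a nontrivial solution in $\cE$.

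To prove that equivalence I would argue as follows. The determinant $\det W(Q(\psi_i))$ is the full $n\times n$ Wronskian of the elements $Q(\psi_1),\ldots,Q(\psi_n)$ of the differential field $\cE$, whose field of constants is $\coC$, so by the Wronskian lemma it vanishes precisely when these elements are $\coC$-linearly dependent, i.e. when $\sum_i c_i Q(\psi_i)=0$ for some $c_i\in\coC$ not all zero. Setting $y_0:=\sum_i c_i\psi_i$, linearity of $Q$ over $\coC$ yields $Q(y_0)=0$, while $P(y_0)=0$ holds automatically, and $y_0\neq 0$ because the $\psi_i$ are $\coC$-independent; this exhibits a nontrivial common solution. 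Conversely, a nonzero $y_0\in\cE$ with $P(y_0)=Q(y_0)=0$ must, by the no-new-constants property, be of the form $\sum_i c_i\psi_i$ with $c_i\in\coC$ not all zero, and then $Q(y_0)=0$ reads $\sum_i c_i Q(\psi_i)=0$, forcing the Wronskian to vanish, hence $\dres(P,Q)=0$. The case where $\cE$ is instead a Picard--Vessiot extension for $Q(y)=0$ is handled symmetrically, using the second expression $(-1)^{mn}b_m^{n}\det W(P(\phi_i))/\det W(\phi_i)$ in Poisson's formula with a fundamental system $\phi_1,\ldots,\phi_m$ of $Q(y)=0$ in $\cE$.

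I do not expect a serious obstacle here: the only ingredient beyond Poisson's formula is the standard fact that vanishing of the $n\times n$ Wronskian of $n$ elements of a differential field characterizes their linear dependence over the constants (see, e.g., \cite{VPS}), and the remainder is bookkeeping. If one preferred to avoid Wronskians, an alternative route would pass through Theorem~\ref{lem-elimIdeal}, which already equates $\dres(P,Q)\neq 0$ with right-coprimeness of $P$ and $Q$ in $\bbK[\partial]$; but the Wronskian approach has the advantage of producing the common solution explicitly inside the prescribed Picard--Vessiot extension $\cE$.
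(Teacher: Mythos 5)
Your proposal is correct and follows essentially the same route as the paper: both reduce the statement via Poisson's formula to the vanishing of $\det W(Q(\psi_i))$, and then identify that vanishing with $\coC$-linear dependence of the $Q(\psi_i)$, which produces (or is produced by) a nontrivial common solution $\sum_i c_i\psi_i$ in the Picard--Vessiot extension. Your explicit appeal to the Wronskian lemma and the check that $a_n\neq 0$ and $\det W(\psi_i)\neq 0$ only make explicit what the paper leaves implicit.
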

\begin{proof}
Let us consider a fundamental systems of solutions $\psi_1,\ldots ,\psi_n$ of $P(y)=0$ in $(\cE_P,\partial_P)$, whose field of constants is $\coC$. Thus $V=\oplus_i\coC \psi_i \subset \cE_P$ is the solution set of $P(y)=0$. A nontrivial solution of the system  \eqref{eq-systemPQ} in $\cE_P$ is therefore a nonzero $\psi\in V$ such that $Q(\psi)=0$.

By Poisson formula $\dres(P,Q)=0$ if and only if $\det(W(Q(\psi_i)))=0$. Equivalently, the columns of $W(Q(\psi_i))$ must be linearly dependent over $\coC$, namely for some $c_i\in \coC$ not all zero
\begin{equation*}
    \sum_i c_i \partial^j_P (Q(\psi_i))=0,\,\, j=0, 1,\ldots ,n-1.
\end{equation*}
In other words, $\det(W(Q(\psi_i)))=0$ is equivalent to the existence of a nonzero 
$\psi=\sum_i c_i \psi_i$ in  $V$ such that $Q(\psi)=0$.
\end{proof}

\begin{rem}
   {Observe that as a result of theorems \ref{lem-elimIdeal} and \ref{thm-DR} the differential operators $P$ and $Q$ have a common factor $F$ in $\Sigma [\partial]$ if and only if they have a non trivial solution in $\cE_P\cap \cE_Q$.}
\end{rem}

\bibliographystyle{plain}

\bibliography{Bibliography.bib}

\begin{thebibliography}{10}

\bibitem{AB}
S.S. Abhyankar and C.J. Bajaj.
\newblock {Automatic parameterization of rational curves and surfaces IV:
  algebraic space curves}.
\newblock {\em ACM Transactions on Graphics}, 8(4):325--334, 1989.

\bibitem{AMW}
P.B. Acosta-Hum\'anez, J.J Morales-Ruiz, and J.A. Weil.
\newblock {Galoisian approach to integrability of Schr\"odinger equation}.
\newblock {\em Rep. Math. Phys.}, 67(3):305--374, 2011.

\bibitem{Arr}
C.~Arreche.
\newblock {On the computation of the parameterized differential Galois group
  for a second-order linear differential equation with differential
  parameters}.
\newblock {\em J. Symbolic Comput.}, 75:25--55, 2016.

\bibitem{AM}
M.F. Atiyah and I.G. MacDonald.
\newblock {\em {Introduction to Commutative Algebra}}.
\newblock Reading. Addison--Wesley, 1969.

\bibitem{Bron}
M.~Bronstein.
\newblock {\em {Symbolic integration I: Transcendental functions (Vol. 1)}}.
\newblock Springer Science \& Business Media. Springer, 2013.

\bibitem{BC1}
J.L. Burchnall and T.W. Chaundy.
\newblock Commutative ordinary differential operators.
\newblock {\em Proc. Lond. Math. Soc.}, s2-21:420--440, 1923.

\bibitem{CS}
P.J. Cassidy and M.F. Singer.
\newblock {Galois Theory of Parameterized Differential Equations and Linear
  Differential Algebraic Groups}.
\newblock {\em Differential Equations and Quantum Groups (IRMA Lect. Math.
  Theor. Phys.)}, 9:113-- 157, 1991.

\bibitem{Cha}
M.~Chardin.
\newblock {Differential Resultants and Subresultants}.
\newblock {\em Proc. FCT'91. Lecture Notes in Computer Science.
  Springer-Verlag}, 529:471--485, 1991.

\bibitem{Cox}
D.~Cox, J.~Little, and D.~O'Shea.
\newblock {\em {Ideals, Varieties, and Algorithms. An Introduction to
  Computational Algebraic Geometry and Commutative Algebra}}.
\newblock Undergraduate Texts in Mathematics. Springer International
  Publishing, New York, 2007.

\bibitem{CH}
T.~Crespo and Z.~Hajto.
\newblock {\em {Algebraic Groups and Differential Galois Theory}}, volume 122
  of {\em Graduate Studies in Mathematics}.
\newblock Amer. Math. Soc., Providence, Rhode Island, 2011.

\bibitem{DGU}
R.~Dickson, F.~Gesztesy, and K.~Unterkofler.
\newblock {A New Approach to the Boussinesq Hierarchy}.
\newblock {\em Math. Nachr.}, 198:51--108, 1999.

\bibitem{fulton2008}
W.~Fulton.
\newblock {\em {Algebraic Curves: an Introduction to Algebraic Geomentry}}.
\newblock Benjamin, New York, 1969.

\bibitem{Good}
K.R. Goodearl.
\newblock Centralizers in differential, pseudo-differential and fractional
  differential operator rings.
\newblock {\em Rocky Mountain J. Math.}, 13(4):573--618, 1983.

\bibitem{Grig}
N.V. Grigorenko.
\newblock {Algebraic-geometric operators and Galois differential theory}.
\newblock {\em Ukrainian Math. J.}, 61:14--29, 2009.

\bibitem{HRZ2023}
R.~Hernández Heredero, S.L. Rueda, and M.A. Zurro.
\newblock Computing centralizers of third order operators.
\newblock {\em EACA 2022. XVII Meeting on Computer Algebra and Applications},
  pages 103--106, 2023.

\bibitem{GD}
{I. M. Gel'fand, and L. A. Dikey}.
\newblock {Asymptotic behaviour of the resolvent of Sturm-Liouville equations
  and the algebra of the Korteweg-de Vries equations}.
\newblock {\em Russian Math. Surveys}, 30(5):77--113, 1975.

\bibitem{BGV}
A.~Verschoren J.L.~Bueso, J. G\' omez-Torrecillas.
\newblock {\em {Algorithmic methods in non-commutative algebra}}.
\newblock Klumer Academic Publishers, 2003.

\bibitem{K78}
I.~M. Krichever.
\newblock Commutative rings of ordinary linear differential operators.
\newblock {\em Funct. Anal. Appl.}, 12(3):175--185, 1978.

\bibitem{Li}
Z.~Li.
\newblock A subresultant theory for {O}re polynomials with applications.
\newblock {\em Proc. Int. Symp. Symbolic and Algebraic Computation}, pages
  132--139, 1998.

\bibitem{Maple}
Maplesoft.
\newblock Maple (version 2023), 2023.
\newblock Maplesoft, a division of Waterloo Maple Inc., Waterloo, Ontario.

\bibitem{McW}
S.~McCallum and F.~Winkler.
\newblock Resultants: Algebraic and differential.
\newblock {\em Techn. Rep. J. Kepler University}, RISC18-08:pp. 21, 2018.

\bibitem{MO2018}
A.~Minchenko and A.~Ovchinnikov.
\newblock Calculating galois groups of third-order linear differential
  equations with parameters.
\newblock {\em Communications in Contemporary Mathematics}, 20(4):1750038,
  2018.

\bibitem{Morales}
J.~J. Morales-Ruiz.
\newblock {\em {Differential Galois theory and non-integrability of Hamiltonian
  systems}}.
\newblock Birkh\"auser, Berlin, 1999.

\bibitem{MRZ1}
J.~J. Morales-Ruiz, S.L. Rueda, and M.A. Zurro.
\newblock {Factorization of KdV Schr\" odinger operators using differential
  subresultants}.
\newblock {\em Adv. Appl. Math.}, 120:102065, 2020.

\bibitem{MRZ2}
J.~J. Morales-Ruiz, S.L. Rueda, and M.A. Zurro.
\newblock {Spectral {P}icard–{V}essiot fields for {A}lgebro-geometric
  {S}chr\"Odinger operators}.
\newblock {\em Annales de l'Institut Fourier}, 71(3):1287--1324, 2021.

\bibitem{Mum2}
D.~Mumford.
\newblock An {A}lgebro-{G}eometric {C}onstruction of {C}ommuting {O}perators
  and of {S}olutions to the {T}oda {L}attice {E}quation, {K}orteweg de {V}ries
  {E}quation and {R}elated {N}on-{L}inear {E}quations.
\newblock {\em Proc. of the Int. Symp. on Alg. Geom., (Kyoto Univ., Kyoto,
  1977)}, pages 115--153, 1978.

\bibitem{Prev}
E.~Previato.
\newblock Another algebraic proof of {W}eil's reciprocity.
\newblock {\em Atti Accad. Naz. Lincei Cl. Sci. Fis. Mat. Natur. Rend. Lincei
  (9) Mat. Appl}, 2(2):167--171, 1991.

\bibitem{PRZ2019}
E.~Previato, S.~L. Rueda, and M.~A. Zurro.
\newblock Commuting {O}rdinary {D}ifferential {O}perators and the {D}ixmier
  {T}est.
\newblock {\em {SIGMA Symmetry Integrability Geom. Methods Appl.}}, 15(101):23
  pp., 2019.

\bibitem{VPS}
M.~van~der Put and M.~F. Singer.
\newblock {\em Galois theory of linear differential equations}, volume 328 of
  {\em {Grundlehren der Mathematischen Wissenschaften}}.
\newblock Springer Science \& Business Media, 2012.

\bibitem{Sch}
I.~Schur.
\newblock \"{U}ber vertauschbare lineare {D}ifferentialausdr\"ucke.
\newblock {\em Berlin Math. Gesellschaft, Sitzungsbericht. Arch. der Math.,
  Beilage}, 3(8):2--8, 1904.

\bibitem{SWP}
J.R. Sendra, J.R. Winkler, and S.~P\'erez-D\'{\i}az.
\newblock {\em Rational Algebraic Curves: A Computer Algebra Approach}.
\newblock Number~22 in Algorithms and Computation in Mathematics.
  Springer-Verlag, Heidelberg, 2007.

\bibitem{singer1996testing}
M.~F. Singer.
\newblock Testing reducibility of linear differential operators: a group
  theoretic perspective.
\newblock {\em Applicable Algebra in Engineering, Communication and Computing},
  7(2):77--104, 1996.

\bibitem{Veselov}
A.~P. Veselov.
\newblock {On Darboux-Treibich-Verdier Potentials}.
\newblock {\em Lett. Math. Phys.}, 96(1):209--216, 2011.

\bibitem{We}
R.~Weikard.
\newblock On commuting differential operators.
\newblock {\em Electron. J. Differential Equations}, 2000(19):1--11, 2000.

\bibitem{W1985}
G.~Wilson.
\newblock Algebraic curves and soliton equations.
\newblock In E.~Arbarello et~al., editor, {\em Geometry {T}oday}, pages
  303--329. Birkh\"auser, Boston, 1985.

\bibitem{Zheglov}
A.B. Zheglov.
\newblock {\em Algebra, Geometry and Analysis of Commuting Ordinary
  Differential Operators.}
\newblock {Moscow State University, Moscow}, 2020.

\end{thebibliography}



\end{document}